\documentclass{amsart}
\usepackage{todonotes}
\usepackage{amsmath}
\usepackage{amsfonts}
\usepackage{amssymb}
\usepackage[all]{xy}

\usepackage{hyperref}
\usepackage{tikz}
\usetikzlibrary{patterns}

\theoremstyle{plain}
\newtheorem*{introtheorem}{Theorem}
\newtheorem{theorem}{Theorem}[section]
\newtheorem{proposition}[theorem]{Proposition}
\newtheorem{lemma}[theorem]{Lemma}
\newtheorem{corollary}[theorem]{Corollary}

\newtheorem*{proposition*}{Proposition}

\theoremstyle{definition}
\newtheorem{definition}[theorem]{Definition}
\newtheorem{example}[theorem]{Example}

\theoremstyle{remark}
\newtheorem{remark}[theorem]{Remark}

\newcommand{\secref}[1]{Section~\ref{#1}}
\newcommand{\thmref}[1]{Theorem~\ref{#1}}
\newcommand{\propref}[1]{Proposition~\ref{#1}}
\newcommand{\lemref}[1]{Lemma~\ref{#1}}
\newcommand{\corref}[1]{Corollary~\ref{#1}}

\newcommand{\exref}[1]{Example~\ref{#1}}

\newcommand{\remref}[1]{Remark~\ref{#1}}

\newcommand{\defref}[1]{Definition~\ref{#1}}
\newcommand{\figref}[1]{Figure~\ref{#1}}

\newcommand{\Z}{\mathbb{Z}}
\newcommand{\R}{\mathbb{R}}

\newcommand{\be}{\mathbf{e}}

\renewcommand{\bar}{\overline}

\begin{document}

\title[Face Group]{The Face Group of a Simplicial Complex}

\author[G.\ Lupton]{Gregory Lupton}
\author[N.\ Scoville]{Nicholas A. Scoville}
\author[P.C.\ Staecker]{P.~ Christopher Staecker}

\address{Department of Mathematics and Statistics, Cleveland State University, Cleveland OH 44115 U.S.A.}

\email{g.lupton@csuohio.edu}

\address{Department of Mathematics, Computer Science, and Statistics, Ursinus College, Collegeville PA 19426 U.S.A.}

\email{nscoville@ursinus.edu}

\address{Department of Mathematics, Fairfield University, Fairfield CT 06827 U.S.A.}

\email{cstaecker@fairfield.edu}

\date{\today}

\keywords{Simplicial Complex, Combinatorial Topology,  Edge Group,  Contiguity, Trivial Extension, Second Homotopy Group,}
\subjclass{ (Primary) 55U10 05E45;  (Secondary) 55M99 55Q99}

\begin{abstract}  The edge group of a simplicial complex is a well-known,  combinatorial version of the fundamental group.  It is a group associated to a simplicial complex that consists of equivalence classes of edge loops and that is isomorphic to the ordinary (topological) fundamental group of the spatial realization. We define a counterpart to the edge group that likewise gives a combinatorial version of the second (higher) homotopy group.  Working entirely combinatorially, we show our group is an abelian group and also respects products.  We show that our combinatorially defined group is isomorphic to the ordinary (topological) second homotopy group of the spatial realization.
\end{abstract}

\maketitle

\section{Introduction}
The \emph{edge group} of a simplicial complex is a well-known combinatorial version of the fundamental group (see \cite{Mau96} for example---the construction is briefly summarized in \secref{sec: combinatorial face group} below). Let $E(X)$ denote the edge group of the simplicial complex $X$.  Then there is an isomorphism of groups $E(X) \cong \pi_1(|X|)$ between the (combinatorially defined) edge group of $X$ and the (topological) fundamental group of $|X|$, the spatial realization of $X$. In this paper, we give a like-minded combinatorial version of the second homotopy group of a simplicial complex.  

We start with a simplicial complex $X$ and construct a different group, which we call the \emph{face group} of $X$ and denote by  $F(X)$.  It is defined combinatorially, using the standard notion of \emph{contiguity} of simplicial maps together with an ingredient that effectively allows us to subdivide the domains of certain simplicial maps.  This second ingredient is the notion of (trivial) extensions of maps with domain a simplicial interval or a product of such. 

Once constructed, and its basic properties established, our main result about this group is the following.

\begin{introtheorem}
Let $X$ be a simplicial complex.  The face group of $X$ satisfies
$$F(X) \cong \pi_2(|X|),$$
where $|X|$ denotes the spatial realization of $X$.
\end{introtheorem}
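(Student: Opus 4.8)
The plan is to exhibit mutually inverse homomorphisms between $F(X)$ and $\pi_2(|X|)$: one direction given by spatial realization, the other by simplicial approximation, and then to recognize that the combinatorial equivalence relation built from contiguity and trivial extensions corresponds exactly to homotopy of realizations rel boundary. Throughout, one identifies the spatial realization $|I_n \times I_m|$ of a product of simplicial intervals, together with its combinatorial boundary, with the square $(I^2, \partial I^2)$ via the canonical (homotopy) identification coming from the projections $I_n \times I_m \to I_n$, $I_n \times I_m \to I_m$.

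\smallskip\noindent\emph{The map $F(X) \to \pi_2(|X|)$.} A generator of $F(X)$ is the class of a simplicial map $f$ from a product $I_n \times I_m$ to $X$ that is trivial on the combinatorial boundary. Realize to get $|f| \colon |I_n \times I_m| \to |X|$, and precompose with the identification above to obtain a based map $(I^2, \partial I^2) \to (|X|, x_0)$ and hence an element of $\pi_2(|X|)$. To see this is well defined on $F(X)$ I would invoke two standard facts: contiguous simplicial maps have homotopic realizations, rel the subcomplex on which they agree, and the realization of a trivial extension is homotopic to that of the original map, the extension amounting merely to a reparametrization of a subdivided square. That the assignment is a homomorphism follows by matching the combinatorial concatenation of squares with the usual operation on $\pi_2$; both groups being abelian — the face group provably so by the results already established — removes any concern about the precise base conditions.

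\smallskip\noindent\emph{The map $\pi_2(|X|) \to F(X)$.} Given $\alpha \colon (I^2, \partial I^2) \to (|X|, x_0)$, pull back the cover of $|X|$ by open stars of vertices and choose $n, m$ large enough that the mesh of $I_n \times I_m$ (under the identification with $I^2$) lies below the Lebesgue number of this cover; then $\alpha$ admits a simplicial approximation $g \colon I_n \times I_m \to X$, which one normalizes to be trivial on the boundary. I would then show that $[g] \in F(X)$ depends on neither the grid nor the representative: refining the grid is a trivial extension, and a homotopy $\alpha \simeq \alpha'$ rel $\partial I^2$ is handled by triangulating the prism $I^2 \times I$ compatibly with the two end grids, approximating the homotopy simplicially, and reading off from the prism a finite chain of contiguities and trivial extensions linking the two approximations. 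Checking this is a homomorphism is routine once this independence is in hand.

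\smallskip\noindent\emph{Mutual inverseness, and the main obstacle.} One composite is the identity essentially by definition, since a simplicial approximation is homotopic to the map it approximates, so realizing and then re-approximating $\alpha$ returns its own $\pi_2$-class. The other composite — start with simplicial $f \colon I_n \times I_m \to X$, realize, re-approximate — must return a map combinatorially equivalent to $f$; this is the combinatorial analogue of the classical ingredient behind the edge-group isomorphism, namely that two simplicial maps with homotopic realizations become contiguous after passing to a common subdivision and composing with simplicial approximations of identity maps. The heart of the matter, and the step I expect to be the main obstacle, is to verify that the only subdivisions ever required in such a chain are those realized by the trivial-extension move on products of simplicial intervals, and to carry the boundary/basepoint normalization cleanly through these subdivisions of squares and of one-parameter families of squares. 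Granting that, the remaining bookkeeping parallels the classical treatment of the edge group.
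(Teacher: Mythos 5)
Your overall strategy---realization in one direction, simplicial approximation in the other, with contiguity matching homotopy rel boundary and trivial extension matching subdivision/reparametrization---is exactly the paper's strategy, and your outline of well-definedness, the homomorphism property, and the two composites is sound in spirit. But there is a concrete gap at the point where the two kinds of ``square'' meet, and it is precisely the gap the paper spends its two longest technical sections closing.

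The domain of a face sphere is the \emph{categorical} product $I_m\times I_n$, which contains $3$-simplices such as $\{(0,0),(1,0),(0,1),(1,1)\}$; its realization $|I_m\times I_n|$ is a $3$-dimensional complex, not the square, so ``realize $f$ and identify the domain with $(I^2,\partial I^2)$'' requires an actual map, not merely a homotopy identification via projections (which in any case points the wrong way). The paper supplies one: the simplicial map $E\colon I_{m,n}\to I_m\times I_n$ from the genuine Cartesian-product triangulation $I_{m,n}$ of $I^2$, and sets $\Phi([g])=[\,|g\circ E|\,]$. More seriously, in the reverse direction simplicial approximation of $\alpha\colon (I^2,\partial I^2)\to(|X|,x_0)$ produces a simplicial map out of a triangulation such as $I_{m,m}$, \emph{not} out of $I_m\times I_m$: the vertex correspondence $E^{-1}$ is not simplicial, since the anti-diagonal $\{(i+1,j),(i,j+1)\}$ is a simplex of the categorical product but not of $I_{m,n}$, so an approximating vertex map need not carry the extra simplices of $I_m\times I_n$ into simplices of $X$. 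Your ``simplicial approximation $g\colon I_n\times I_m\to X$'' therefore does not exist as stated. The paper's fix is the construction $D_f$: double every row and column via $\gamma$, then adjust the vertex values at the lower-right corner of each doubled block, and verify that $D_f$ is simplicial (\propref{prop: Df simplicial}), that $D_{g\circ E}$ is a trivial extension of $g$ up to contiguity (\propref{prop: E then D_f}), and that $D_{(-)}$ carries contiguities to contiguity equivalences (\lemref{lem: extension of contiguity}); injectivity also needs \thmref{thm: homotopy implies contiguity} together with the compatibility of $E$ with the subdivision maps $\rho_k$ (\lemref{lem: switching E}). None of this is routine bookkeeping, and your proposal contains no substitute for it. You do correctly sense that ``which subdivisions are needed'' is delicate, but that particular worry is the easier one (it is resolved by $\rho_k$ and \lemref{lem: switching E}); the real obstacle is the combinatorial mismatch between $I_{m,n}$ and $I_m\times I_n$. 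Your prism-triangulation treatment of homotopies is also a legitimate alternative to the paper's uniform-continuity slicing argument, but it would only deepen this problem, since slices of a triangulated prism are again honest triangulations rather than categorical products.
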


We construct the face group and establish its basic properties  in \secref{sec: defn of Face Group} below, and show this isomorphism of groups in \thmref{thm: Face group iso homotopy group}.

Part of our development concerns simplicial approximation of continuous maps and homotopies (see \secref{sec: simp approx}), which usually involves barycentric subdivision of a simplicial complex.  But barycentric subdivision (of the domain of a simplicial map) seems to cause technical complications in the development we want here, to the point where its use is not feasible.    We avoid its use in our development as follows. First, we mimic in the combinatorial setting the definition of higher homotopy groups based on equivalence classes of maps from a cube modulo the boundary, rather than on equivalence classes of based maps from a sphere.  
Second, once we have rectangular domains for our simplicial maps, we are able to use the alternative to barycentric subdivision mentioned above (trivial extensions) to effect subdivisions of the domains of our representative simplicial maps.

The paper is organized as follows. In \secref{sec: simplicial basics} we review some basics about simplicial complexes and in \secref{sec: extn contiguity} we develop the notions of contiguity and (trivial) extensions of simplicial maps with certain domains. The basic idea is that of \emph{extension-contiguity equivalence} of simplicial maps with domain a (``rectangular") product of intervals.  \secref{sec: extn contiguity} contains some results about operating with with these notions.  The construction of our face group is given in \secref{sec: defn of Face Group}. In \thmref{thm: group} we show that the construction does indeed associate a group to a simplicial complex. In \thmref{thm: Face group abelian} we show the face group is an abelian group (this fact about the face group will follow from our main result \thmref{thm: Face group iso homotopy group}, but we give a direct combinatorial proof here). We also give a combinatorial proof that our face group behaves well with respect to products (\thmref{thm: face group product}).
In \secref{sec: combinatorial face group}, we cast our face group in an intrinsic (to the simplicial complex)  way that strongly resembles the typical description of the edge group.  This lends a somewhat more algorithmic aspect to our construction, comparable to the way in which the edge group is conceived as an algorithmic version of the fundamental group. 
Our main result requires considerable development of results on simplicial approximation tailored to our particular context.  This is done in the next two sections. In \secref{sec: simp approx} we adapt the typical development of simplicial approximation into our specific context of certain triangulations of a rectangle. In 
\secref{sec: Imn vs. ImxIn}, we further adapt this development to suit (trivial) extensions of simplicial maps with domain a simplicial product of intervals.  In \secref{sec: face group iso} we apply all this development to prove our main result \thmref{thm: Face group iso homotopy group}. We finish the paper with a brief \secref{sec: future work} on future work that indicates developments from the basic results shown here. These include applications to our work on digital topology and, separately, a development in the combinatorial setting of a simplicial loop space.    

\subsection*{Acknowledgement of and Comparison with Prior Work.} After this paper was completed and as its companion \cite{LuSc25} was being completed, we became aware of the work of \cite{Gr02}.  There, Grandis associates to a simplicial complex $X$ various other simplicial complexes that model 
spaces of maps into $X$, such as path and loop spaces. In particular, he defines the homotopy group $\widetilde{\pi_2}(X)$ of a simplicial complex $X$ as $\widetilde{\pi_1}(\widetilde{\Omega} X)$ (actually as $\widetilde{\pi_0}(\widetilde{\Omega}^2 X)$), where $\widetilde{\Omega} X$ is a simplicial complex that plays the role of the based loop space of $X$ and $\widetilde{\pi_1}(X)$ is an ``extrinsic" fundamental group that is isomorphic to the ```intrinsic" edge  group of $X$. We use the decorated notations $\widetilde{\pi_i}$ and $\widetilde{\Omega}$ here to distinguish those of \cite{Gr02} from ours in the following discussion. Whilst there are considerable similarities between our work here and some of that in \cite{Gr02} (which has a much wider-ranging scope than our work here), we point here to some of their differences. We also discuss some differences between our development and that of \cite{Gr02} in \cite{LuSc25}.  

First, our work here developed out of our prior work in digital topology.  Since we intend applying our results here to continue our work in digital topology (see \secref{sec: future work}), it is useful for us to have a development that matches the way in which we work in the digital setting: rectangular (finite) domains, left homotopy (using a cylinder object), trivial extensions. The trivial extensions we use here are essentially the same as the ``delays" of \cite{Gr02}, but we adopted their use from Boxer's work in digital topology \cite{Bo99}. 
We chose our face spheres (see \secref{sec: simplicial basics} below for vocabulary and notation) to be finite arrays of vertices in $X$ partly to match the generators of the digital second homotopy groups of \cite{L-M-S-S-T} but also so as to extend directly the edge loops in $X$ that form the edge group, namely finite sequences of vertices. 
 By contrast, a path in \cite{Gr02} has domain $\Z$, and maps such as our face spheres here (``double paths" in \cite{Gr02}, conceived of as loops in $\widetilde{\Omega}X$) have domain $\Z^2$.

Second, Grandis shows an isomorphism  $\widetilde{\pi_2}(X) \cong \pi_2(|X|)$, so our  face group of $X$   is apparently isomorphic to Grandis' 
$\widetilde{\pi_2}(X)$ (\emph{per} \thmref{thm: Face group iso homotopy group}).  But the isomorphism is not so evident without passing through the spatial realization. 
In \cite{LuSc25}, we give a simplicial complex $\Omega X$ similar to the one Grandis gives, and show an isomorphism $F(X) \cong E(\Omega X)$, between the Face Group of $X$ and the Edge Group of $\Omega X$, whilst staying in the combinatorial setting.  Thus, the isomorphism $F(X) \cong \pi_2(|X|) \cong \widetilde{\pi_2}(X)$ may be interpreted as (equivalent to) an isomorphism $E(\Omega X) \cong \widetilde{\pi_1}(\widetilde{\Omega} X)$.  However, as we discuss in \cite{LuSc25}, there are a number of basic differences between the $\widetilde{\Omega} X$ of  \cite{Gr02} and the $\Omega X$ of \cite{LuSc25}. In particular, they are combinatorially different: different vertices and simplices. Overall, a direct, combinatorial correspondence between the $\widetilde{\pi_2}(X)$ of \cite{Gr02} and our $F(X)$ here is not immediate. 

Third, and perhaps most significantly, the finite rectangular domains $I_m \times I_n$ that we use here seem well-suited for approaching $F(X)$ from an algorithmic point of view. For example, given $X$ is a finite simplicial complex, there are only finitely many face spheres contiguous to a given face sphere $f\colon I_m \times I_n \to X$. By contrast, in the set up of \cite{Gr02}, a typical ``double path" representative of a class of  $\widetilde{\pi_2}(X)$ has infinite domain $\Z \times \Z$ and typically there are infinitely many ``double paths" contiguous (or ``linked" in the terminology of \cite{Gr02}) to a given one (even when $X$ is finite).

\section{Simplicial Complexes}\label{sec: simplicial basics}

For a general overview of  material on  simplicial complexes, we refer to \cite{Koz08}.  By a simplicial complex here, we mean an abstract simpicial complex.  
Let $(X, x_0)$ be a based simplicial complex ($x_0$ is some vertex of $X$). Let $I_m$ denote the \emph{(simplicial) interval} of length $m$: 
$I_m = [0, m]_{\mathbb Z}$, the simplical complex with vertices the integers from $0$ to $m$ inclusive and edges given by pairs of consecutive integers.

We will describe a group whose elements are equivalence classes of simplicial maps of the form  $\left(I_{m}\times I_{n}, \partial ( I_{m}\times I_{n}) \right) \to (X, x_0)$. A simplicial map of this form will be referred to as \emph{a face sphere} for reasons that will emerge in the development. Here,  
$I_{m}\times I_{n}$ denotes the \emph{categorical product} of (simplicial) intervals; we describe this structure now.    As an abstract simplicial complex, the vertices of $I_{m}\times I_{n}$ consist of the $(m+1) \times (n+1)$ ordered pairs of integers $(i, j)$ with $i \in I_m$ and $j \in I_n$.  The simplices of  $I_{m}\times I_{n}$ consist of all subsets 
$$\{ (i_{r_1}, j_{s_1}), \ldots, (i_{r_p}, j_{s_p})\}$$
for which $\{ i_{r_1}, \ldots,  i_{r_p}\}$ and $\{ j_{s_1}, \ldots,  j_{s_p}\}$ are both simplices of $I_m$ and $I_n$, respectively.  The maximum number of distinct integers in either $\{ i_{r_1}, \ldots,  i_{r_p}\}$ or $\{ j_{s_1}, \ldots,  j_{s_p}\}$ is $2$.  Thus, the maximum dimension of a simplex of $I_{m}\times I_{n}$ is $3$ (four vertices). For instance, $\{ (0, 0), (1, 0), (0, 1), (1, 1) \}$ is a $3$-simplex of $I_{m}\times I_{n}$ (allowing $m, n \geq 1$). The simplicial complex $X$, on the other hand, is just an ordinary, abstract simplicial complex and is not assumed to have any particular type of simplicial structure or special properties.  

We will show a result about a general (categorical) product $X \times Y$ of simplicial complexes $X$ and $Y$ in \thmref{thm: face group product}. The description of $I_m \times I_n$ is just a special case of the more general definition, which goes as follows. Suppose the vertices of $X$ and $Y$ are $\{ v_i \}_{i \in I}$ and $\{ w_j \}_{j \in J}$, respectively. Then  the vertices of $X \times Y$ consist of all ordered pairs 
$\{ (v_i, w_j)\}_{i\in I, j\in J}$. The simplices of  $X \times Y$ consist of all subsets 
$$\{ (v_{r_1}, w_{s_1}), \ldots, (v_{r_p}, w_{s_p})\}$$
for which $\{ v_{r_1}, \ldots,  v_{r_p}\}$ and $\{ w_{s_1}, \ldots,  w_{s_p}\}$ are simplices of $X$ and of $Y$, respectively.
We will make some use of the categorical nature of this product.  Namely, suppose we have simplicial maps $f \colon K \to X$ and $g \colon K \to Y$.  Then we may use these maps as coordinate maps of a simplicial map
$$(f, g) \colon K \to X\times Y,$$
defined in the usual way as $(f, g)(v) = (f(v), g(v))$ for each vertex $v$ of $K$.   The definition of the simplices of $X \times Y$ is essentially formulated so as to make this map a simplicial map. Furthermore, we have projections onto each coordinate $p_1\colon X \times Y \to X$ and $p_2\colon X \times Y \to Y$ that satisfy $p_1\circ (f, g) = f$ and  $p_2\circ (f, g) = g$.  We give some elementary properties of maps involving general products in \propref{prop: contiguity results1}.

By the \emph{boundary} of $I_m \times I_n$, which we denote by $\partial ( I_{m}\times I_{n})$, we mean the vertices and edges of $\{0, m\} \times I_n \cup I_m \times \{0, n\}$.   The boundary $\partial ( I_{m}\times I_{n})$ is to be treated as a simplicial complex that is a sub-complex of the simplicial complex $I_{m}\times I_{n}$.

\begin{remark}\label{rem: I_m x I_n as clique complex}
Another way to view $I_m \times I_n$ is as the clique complex of a certain graph with vertices those of $I_m \times I_n$. By this, we mean the graph $G_{m, n}$ with vertex set 
$$V\left( G_{m, n}\right) = \left\{ (i, j)\mid i \in I_m \text{ and } j \in I_n \right\}$$
and edge set
$$E\left( G_{m, n} \right) = \left\{ \{ (i, j), (i', j') \} \mid \mathrm{max}\{ |i-i'|, |j-j'| \} \leq 1 \right\}.$$
Then we may view $I_m \times I_n$ as the clique complex of the graph $G_{m, n}$: $n$-cliques are $(n-1)$-simplices.
An example of $I_5 \times I_4$ represented as the clique complex of a graph is given in \figref{fig: Im x In as clique}. [\textbf{Note:} We omit loops at vertices from these figures, although strictly speaking, as we have defined $E\left( G_{m, n} \right)$, there is an edge joining each vertex to itself.]  Whilst the graph $G_{m, n}$ is not a planar graph, this way of representing $I_m \times I_n$ is planar, i.e., flat.
The boundary $\partial ( I_{m}\times I_{n})$ is represented in this way as the ordinary boundary of the square.  It is especially convenient to represent maps $I_m \times I_n \to X$ visually in this way and we will develop this point of view throughout the paper.  
\end{remark}

\begin{figure}
\[
\begin{tikzpicture}[scale=.6]
\foreach \x in {0,...,5} {
 \foreach \y in {0,...,4} {
   \node[inner sep=1.5pt, circle, fill=black]  at (\x,\y) {};
  }
 }
\draw (0,0) grid (5,4);
\draw (0,0) -- (4,4);
\draw (2,0) -- (5,3);
\foreach \i in {0,..., 4} {
  \foreach \j in {0,..., 3} {
    \draw (\i,\j) -- (\i+1,\j+1);
  }
}
\foreach \i in {1,..., 5} {
  \foreach \j in {0,..., 3} {
    \draw (\i,\j) -- (\i-1,\j+1);
  }
}
\end{tikzpicture}
\]
\caption{$I_5 \times I_4$ represented as the clique complex of the graph $G_{5, 4}$.} \label{fig: Im x In as clique}
\end{figure}
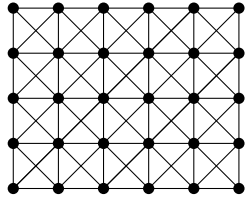

A \emph{face sphere in $X$}, namely a simplicial map $f:  \left(I_m \times I_n, \partial (I_m \times I_n) \right) \to (X, x_0)$, is determined by its vertex map.  We may view the map as a labeling of the vertices of $I_m \times I_n$ with vertices of $X$ in a way that respects simplices.  Continuing the development of \remref{rem: I_m x I_n as clique complex}, we may represent a simplicial map $f\colon  I_m \times I_n \to X$ as a labeling of the vertices of $G_{m, n}$ with vertices of $X$ in such a way that labels of $2$-cliques, $3$-cliques and $4$-cliques in $G_{m, n}$ are simplices of $X$.    

\begin{example}\label{ex: sphere S2}
Let $X$  be the model of the $2$-sphere given by the octahedron which, as a simplicial complex, consists of $6$ vertices, $12$ one-simplices and $8$ two-simplices as illustrated in Figure~\ref{fig: triangulation of S^2}.  Here, we consider this simplicial complex as realized in $\R^3$ with its vertices corresponding to $\pm$ the three standard basis vectors  $\be_1 = (1, 0, 0)$, $\be_2 = (0, 1, 0)$ and $\be_3 = (0, 0, 1)$.  If we choose $x_0 = -\be_1 \in X$ to be the basepoint, then a simplicial map $f:  \left(I_m \times I_n, \partial (I_m \times I_n) \right)  \to (X, x_0)$ corresponds to a labeling of the vertices of $G_{m,n}$ with labels from $\{ \pm \be_1, \pm \be_2, \pm \be_3 \}$, with all vertices around the boundary $\partial (I_m \times I_n)$ labeled with $-\be_1$. In \figref{fig: simplicial map} we have used this ``clique complex of the graph $G_{m, n}$" approach to illustrate a simplicial map $f:  \left(I_5 \times I_4, \partial (I_5 \times I_4) \right)  \to (X, x_0)$.
\end{example}  

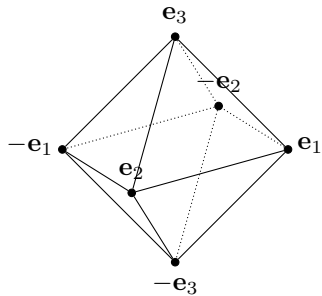
\begin{figure}
\tikzset{vertex/.style={circle,draw,fill,inner sep=0pt,minimum size=1mm}}
\[
\vcenter{\hbox{
\begin{tikzpicture}[scale=1.5]
\node[vertex,label={[right]$\be_1$}] (p1) at (1,0,0) {};
\node[vertex,label={[left]$-\be_1$}] (m1) at (-1,0,0) {};
\node[vertex,label=$\be_3$] (p3) at (0,1,0) {};
\node[vertex,label={[below=.1cm]$-\be_3$}] (m3) at (0,-1,0) {};
\node[vertex,label=$\be_2$] (p2) at (0,0,1) {};
\node[vertex,label=$-\be_2$] (m2) at (0,0,-1) {};
\foreach \x in {p2,p3,m3} {
 \draw (p1) -- (\x) -- (m1);
}
\draw[densely dotted] (p1) -- (m2) -- (m1);
\draw (p3) -- (p2) -- (m3);
\draw[densely dotted] (p3) -- (m2) -- (m3);
\end{tikzpicture}
}}
\] 
\caption{The $2$-sphere as a simplicial complex $X$.\label{fig: triangulation of S^2}}
\end{figure}

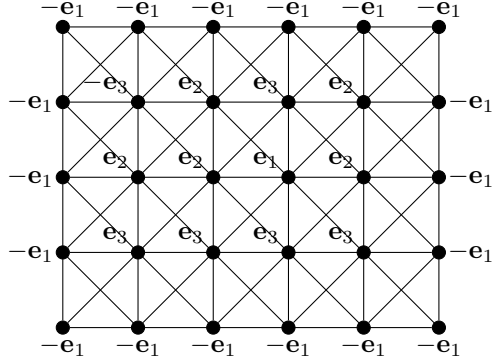
\begin{figure}%[h!]
\centering
$$
\begin{tikzpicture}[scale=1]

\node[inner sep=1.75pt, circle, fill=black] (v00) at (0,0) [draw] {};
\node[inner sep=1.75pt, circle, fill=black] (v01) at (0,1) [draw] {};
\node[inner sep=1.75pt, circle, fill=black] (v02) at (0,2) [draw] {};
\node[inner sep=1.75pt, circle, fill=black] (v03) at (0,3) [draw] {};
\node[inner sep=1.75pt, circle, fill=black] (v04) at (0,4) [draw] {};
\node[inner sep=1.75pt, circle, fill=black] (v10) at (1,0) [draw] {};
\node[inner sep=1.75pt, circle, fill=black] (v11) at (1,1) [draw] {};
\node[inner sep=1.75pt, circle, fill=black] (v12) at (1,2) [draw] {};
\node[inner sep=1.75pt, circle, fill=black] (v13) at (1,3) [draw] {};
\node[inner sep=1.75pt, circle, fill=black] (v14) at (1,4) [draw] {};
\node[inner sep=1.75pt, circle, fill=black] (v20) at (2,0) [draw] {};
\node[inner sep=1.75pt, circle, fill=black] (v21) at (2,1) [draw] {};
\node[inner sep=1.75pt, circle, fill=black] (v22) at (2,2) [draw] {};
\node[inner sep=1.75pt, circle, fill=black] (v23) at (2,3) [draw] {};
\node[inner sep=1.75pt, circle, fill=black] (v24) at (2,4) [draw] {};
\node[inner sep=1.75pt, circle, fill=black] (v30) at (3,0) [draw] {};
\node[inner sep=1.75pt, circle, fill=black] (v31) at (3,1) [draw] {};
\node[inner sep=1.75pt, circle, fill=black] (v32) at (3,2) [draw] {};
\node[inner sep=1.75pt, circle, fill=black] (v33) at (3,3) [draw] {};
\node[inner sep=1.75pt, circle, fill=black] (v34) at (3,4) [draw] {};
\node[inner sep=1.75pt, circle, fill=black] (v40) at (4,0) [draw] {};
\node[inner sep=1.75pt, circle, fill=black] (v41) at (4,1) [draw] {};
\node[inner sep=1.75pt, circle, fill=black] (v42) at (4,2) [draw] {};
\node[inner sep=1.75pt, circle, fill=black] (v43) at (4,3) [draw] {};
\node[inner sep=1.75pt, circle, fill=black] (v44) at (4,4) [draw] {};
\node[inner sep=1.75pt, circle, fill=black] (v50) at (5,0) [draw] {};
\node[inner sep=1.75pt, circle, fill=black] (v51) at (5,1) [draw] {};
\node[inner sep=1.75pt, circle, fill=black] (v52) at (5,2) [draw] {};
\node[inner sep=1.75pt, circle, fill=black] (v53) at (5,3) [draw] {};
\node[inner sep=1.75pt, circle, fill=black] (v54) at (5,4) [draw] {};

\draw (v00) -- (v50);
\draw (v01) -- (v51);
\draw (v02) -- (v52);
\draw (v03) -- (v53);
\draw (v04) -- (v54);
\draw (v00) -- (v04);
\draw (v10) -- (v14);
\draw (v20) -- (v24);
\draw (v30) -- (v34);
\draw (v40) -- (v44);
\draw (v50) -- (v54);
\draw (v03) -- (v14);
\draw (v02) -- (v24);
\draw (v01) -- (v34);
\draw (v00) -- (v44);
\draw (v10) -- (v54);
\draw (v20) -- (v53);
\draw (v30) -- (v52);
\draw (v40) -- (v51);

\foreach \i in {0, 1, 2, 3, 4, 5} {
\node[below]  at (\i,0) {$-\mathbf{e}_1$};
\node[above]  at (\i,4) {$-\mathbf{e}_1$};
}
\foreach \j in {1, 2, 3} {
\node[left]  at (0,\j) {$-\mathbf{e}_1$};
\node[right]  at (5,\j) {$-\mathbf{e}_1$};
}
\node[above left]  at (1,1) {$\mathbf{e}_3$};
\node[above left]  at (2,1) {$\mathbf{e}_3$};
\node[above left]  at (3,1) {$\mathbf{e}_3$};
\node[above left]  at (4,1) {$\mathbf{e}_3$};
\node[above left]  at (1,2) {$\mathbf{e}_2$};
\node[above left]  at (2,2) {$\mathbf{e}_2$};
\node[above left]  at (3,2) {$\mathbf{e}_1$};
\node[above left]  at (4,2) {$\mathbf{e}_2$};
\node[above left]  at (1,3) {$-\mathbf{e}_3$};
\node[above left]  at (2,3) {$\mathbf{e}_2$};
\node[above left]  at (3,3) {$\mathbf{e}_3$};
\node[above left]  at (4,3) {$\mathbf{e}_2$};

\foreach \i in {1,..., 5} {
  \foreach \j in {0,..., 3} {
    \draw (\i,\j) -- (\i-1,\j+1);
  }
}

\end{tikzpicture}
$$
\caption{A face sphere in $X$, in this case a simplicial map  $f: \left(I_5 \times I_4, \partial (I_5 \times I_4) \right) \to (S^2,-\mathbf{e}_1)$ }\label{fig: simplicial map}
\end{figure}

In the sequel, we will refer to this style of representing a face sphere in $X$---a labeling of the vertices of $G_{m, n}$ with vertices from $X$---as ``array style." The vertex map that defines the face sphere is essentially an $(m+1) \times (n+1)$ matrix with entries from the vertices of $X$ that satisfy certain coherence relations.   

\section{Trivial Extension and Extension-Contiguity Equivalence of Maps}\label{sec: extn contiguity}

Simplicial maps $f, g\colon K \to X$ of (abstract) simplicial complexes $K$ and $X$ are \emph{contiguous}, denoted by $f \sim g$, if  $f(\sigma) \cup g(\sigma)$ is a simplex of $X$ whenever $\sigma$ is a simplex of $K$.  Equivalently, we might require that $f(\sigma) \cup g(\sigma)$ be contained in a simplex of $X$.   We are interested in this notion when $K$ is  $I_m \times I_n$ and the maps $f$ and $g$ both map the boundary $\partial(I_m \times I_n)$ to the basepoint of $X$.  Contiguity is evidently a reflexive and symmetric relation.  It is not transitive in general, but there is an associated equivalence relation on maps $K \to X$.  Namely, we say simplicial maps $f$ and $g$ are \emph{contiguity equivalent}, and write $f \simeq g$, if there is a finite sequence of contiguities $f \sim f_1 \sim \cdots \sim f_n \sim g$.  Then contiguity equivalence is an equivalence relation on the set of all simplicial maps $K \to X$ for fixed $K$ (and $X$).  The simplicial maps we consider will actually be maps of pairs $f, g\colon \left(I_m \times I_n, \partial (I_m \times I_n) \right) \to (X, x_0)$ and we will require that each map in the sequence of contiguity-equivalent $f$ and $g$ also be of this form.  Strictly speaking, then, our contiguity equivalences  will be relative the boundary of $I_{m}\times I_{n}$.  Finally, we note that when checking a pair of maps to be contiguous or not, it is sufficient to check that $f(\sigma) \cup g(\sigma)$ is a simplex of $L$ whenever $\sigma$ is a simplex of $K$ \emph{that is not a face of some larger simplex}.  This follows since, if $\sigma'$ is a face of a simplex $\sigma$ such that $f(\sigma) \cup g(\sigma)$ is a simplex of $X$, then  $f(\sigma') \cup g(\sigma')$ is a subset of $f(\sigma) \cup g(\sigma)$ and thus a simplex of $X$. In our case, this means that we may confirm two maps $f, g \colon \left(I_m \times I_n, \partial (I_m \times I_n) \right) \to (X, x_0)$ to be contiguous by checking that $f(\sigma) \cup g(\sigma)$ is a simplex of $X$ for each $3$-simplex $\sigma$ of $I_m \times I_n$.  

We now record some basic results on contiguity equivalence. These are included mainly for convenience of reference.  
The first result provides us with a way of working ``locally" with contiguity equivalences.      

\begin{lemma}\label{localized contiguity equiv}
Let  $R$ be a \emph{rectangular sub-complex} of $I_m \times I_n$. Namely, suppose that $R = [p, q] \times [r, s] \subseteq I_{m}\times I_{n}$ for integers $p, q, r, s$ with  $0 \leq p \leq q \leq m$ and $0 \leq r \leq s \leq n$. Let $f \colon \left(I_m \times I_n, \partial (I_m \times I_n) \right) \to (X, x_0)$ be a simplicial map for which $f_R$, the restriction of $f$ to the sub-complex $R$, is  a map of pairs $f_R: (R, \partial R) \to (X, x_0)$. Let $g \colon (R, \partial R) \to (X, x_0)$ be a map on the sub-complex such that $f_R\simeq g$ by a contiguity equivalence relative the boundary $\partial R$.
Then the map $A \colon \left(I_m \times I_n, \partial (I_m \times I_n) \right) \to (X, x_0)$ defined on vertices by:
\[ A(i,j) = \begin{cases} g(i,j) &\text{ if } (i,j) \in R \\
f(i,j) &\text{ if } (i,j)\not \in R \\
\end{cases} \]
extends to a simplicial map of $I_m \times I_n$ and we have a contiguity equivalence $f \simeq A$.
\end{lemma}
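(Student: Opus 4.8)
The plan is to reduce everything to a single combinatorial observation about how a simplex of $I_m \times I_n$ can meet the rectangular sub-complex $R$. Every simplex $\sigma$ of $I_m \times I_n$ is contained in some unit block $\{a, a+1\} \times \{b, b+1\}$, so the first step is the claim: \emph{if $\sigma$ has a vertex in $R$ and a vertex not in $R$, then every vertex of $\sigma$ lying in $R$ in fact lies in $\partial R$.} Indeed, if a vertex $(i,j) \in \sigma \cap R$ had $p < i < q$ and $r < j < s$, then, since coordinates inside the block change by at most $1$, every vertex of $\sigma$ would satisfy $p \le i' \le q$ and $r \le j' \le s$, so $\sigma \subseteq R$, contrary to assumption. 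Hence a ``straddling'' $\sigma$ meets $R$ only along one of the four sides of $R$, i.e. in $\partial R$.

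Granting this, I would verify that $A$ is a well-defined simplicial map of pairs. Since $f_R$ and $g$ both send $\partial R$ to $x_0$, the maps $A$ and $f$ agree on every vertex of $R$ that lies on $\partial R$, and they agree on every vertex outside $R$ by definition; so by the claim, $A(\sigma) = f(\sigma)$ for every simplex $\sigma$ not contained in $R$, while $A(\sigma) = g(\sigma)$ for every $\sigma \subseteq R$. As $f$ and $g$ are simplicial, $A$ is simplicial. The same check at boundary vertices — noting that a vertex of $R$ lying on $\partial(I_m\times I_n)$ necessarily lies on $\partial R$, because then one of its coordinates is $0$, $m$, or $n$ and is forced to equal $p$, $q$, $r$, or $s$ — shows $A$ maps $\partial(I_m\times I_n)$ to $x_0$.

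For the contiguity equivalence I would fix a witnessing sequence $f_R \sim h_1 \sim \cdots \sim h_k \sim g$ of maps $(R,\partial R)\to (X,x_0)$ and, for each $\ell$, define $A_\ell\colon \left(I_m\times I_n, \partial(I_m\times I_n)\right)\to(X,x_0)$ by $A_\ell(v) = h_\ell(v)$ for $v \in R$ and $A_\ell(v) = f(v)$ otherwise, with the conventions $A_0 = f$ and $A_{k+1} = A$. By the previous paragraph each $A_\ell$ is a well-defined simplicial map of pairs. To see $A_\ell \sim A_{\ell+1}$, take a simplex $\sigma$ of $I_m\times I_n$: if $\sigma \subseteq R$ then $A_\ell(\sigma)\cup A_{\ell+1}(\sigma) = h_\ell(\sigma)\cup h_{\ell+1}(\sigma)$ is a simplex because $h_\ell \sim h_{\ell+1}$; otherwise $A_\ell(\sigma) = f(\sigma) = A_{\ell+1}(\sigma)$ and the union is just $f(\sigma)$. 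Concatenating gives $f = A_0 \sim A_1 \sim \cdots \sim A_{k+1} = A$, a contiguity equivalence relative $\partial(I_m\times I_n)$.

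The only real content is the geometric claim about straddling simplices; once it is in place, both assertions are routine verifications. I expect the only delicate point to be the bookkeeping when $R$ shares a side or a corner with $\partial(I_m\times I_n)$, but this is handled uniformly by the observation, already used above, that any vertex of $R$ on $\partial(I_m\times I_n)$ also lies on $\partial R$, where $f$, $g$, and all the $h_\ell$ equal $x_0$.
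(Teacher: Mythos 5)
Your proof is correct and follows essentially the same two-step strategy as the paper: paste $g$ into $f$ along $\partial R$ (where both are $x_0$) to get a simplicial map, then extend each contiguity in the witnessing sequence by leaving the values outside $R$ unchanged. The only difference is that you make explicit the key geometric fact---that a simplex straddling $R$ meets $R$ only in $\partial R$---which the paper asserts without the coordinate argument; your version is slightly more careful on that point.
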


\begin{proof}
The map $A$ is assured to give a simplical map because the maps $f$ and $g$ agree on the subcomplex $\partial R$, the boundary of the rectangle $R$.  This boundary is the intersection of two subcomplexes of $I_{m} \times I_{n}$, namely $R$ and the union of the complement of $R$ with the boundary $\partial R$. These two subcomplexes have union equal to $I_{m} \times I_{n}$ and do not intersect other than in $\partial R$.  Other than the simplices of $\partial  R$, no simplex of $I_{m} \times I_{n}$ belongs to both $R$ and the union of the complement of $R$ and its boundary.  It follows that $A$ is a simplicial map since both $f$ and $g$ are. 

Furthermore, suppose we have a sequence of contiguities $f_R = f_1 \sim \cdots \sim f_k = g$ of maps of $R$ relative the boundary of $R$. Define $A_t  \colon \left(I_m \times I_n, \partial (I_m \times I_n) \right) \to (X, x_0)$ as we did for $A$ only using $f_t$ on $R$ in place of $f_R$, for each $t=1, \ldots, k$.  Then each contiguity $f_t \sim f_{t+1}$ extends to a contiguity $A_t \sim A_{t+1}$ with $A_t(i, j) = A_{t+1}(i, j) = f(i, j)$ for each $(i,j)\not \in R$.  Thus, we have contiguities $f = A_1 \sim \cdots \sim A_k = A$ as desired.
\end{proof}

When we apply \lemref{localized contiguity equiv}, we will refer to ``patching in" the (local) contiguity equivalence on the sub-region $R$ to obtain one on the larger domain $I_m \times I_n$ by leaving values of the maps on vertices outside $R$ unchanged. 
Also,  when making use of \lemref{localized contiguity equiv}, we will frequently employ the device of translating the sub-complex $R$ to a standard rectangle $I_{q-p} \times I_{s-r}$.  We fix notation for such a translation now.

\begin{definition}\label{def: translation Tpq}
Let $T_{p, q} \colon \Z \times \Z \to \Z \times \Z$ be the translation defined by $T_{p, q}(i, j) = (i-p, j-q)$ for each  $(i, j) \in \Z \times \Z$.  This is evidently a simplicial map.
\end{definition}

\begin{proposition}\label{prop: contiguity results1} 
    \begin{itemize}
\item[(a)] Given pairs of contiguity equivalent simplicial maps $f \simeq f' \colon X \to Y$ and $g \simeq g' \colon Y \to Z$, their compositions are contiguity equivalent: we have $g\circ f \simeq g' \circ f' \colon X \to Z$.
\item[(b)] Given pairs of contiguity equivalent simplicial maps $f \simeq f' \colon K_1 \to X_1$ and $g \simeq g' \colon K_2 \to X_2$, their products are contiguity equivalent: we have $f\times g \simeq f' \times g' \colon K_1 \times K_2 \to X_1 \times X_2$.
\item[(c)] Given pairs of contiguity equivalent simplicial maps $f \simeq f' \colon K \to X$ and $g \simeq g' \colon K \to Y$, we have a contiguity equivalence $(f, g) \simeq (f', g') \colon K \to X \times Y$.  
    \end{itemize}
\end{proposition}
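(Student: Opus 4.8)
The plan is to reduce every assertion to the case of a \emph{single} contiguity and then chain the resulting contiguities, using that $\simeq$ is by definition the transitive closure of $\sim$. For (a) I would first record two one-sided facts. If $f \sim f' \colon X \to Y$ is a single contiguity and $g \colon Y \to Z$ is any simplicial map, then $g \circ f \sim g \circ f'$: for a simplex $\sigma$ of $X$ the set $f(\sigma) \cup f'(\sigma)$ is a simplex of $Y$, so applying $g$ shows $(g \circ f)(\sigma) \cup (g \circ f')(\sigma) = g(f(\sigma) \cup f'(\sigma))$ is a simplex of $Z$. Dually, if $g \sim g' \colon Y \to Z$ is a single contiguity and $f \colon X \to Y$ is any simplicial map, then $g \circ f \sim g' \circ f$: for a simplex $\sigma$ of $X$, apply the contiguity $g \sim g'$ to the simplex $f(\sigma)$ of $Y$ to see that $(g \circ f)(\sigma) \cup (g' \circ f)(\sigma)$ is a simplex of $Z$. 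Now, given finite chains $f = f_0 \sim \cdots \sim f_k = f'$ and $g = g_0 \sim \cdots \sim g_\ell = g'$, the first fact applied along the first chain gives $g \circ f \simeq g \circ f'$, the second fact applied along the second chain gives $g \circ f' \simeq g' \circ f'$, and concatenating these proves (a).

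For (c) the same template works. A single contiguity $f \sim f'$ yields $(f, g) \sim (f', g)$, and a single contiguity $g \sim g'$ yields $(f', g) \sim (f', g')$; chaining over the two finite sequences of contiguities as in (a) then gives the result. The one-step claim is immediate from the definition of the simplices of a categorical product: for a simplex $\sigma$ of $K$, the set $(f, g)(\sigma) \cup (f', g)(\sigma)$ has first-coordinate projection contained in $f(\sigma) \cup f'(\sigma)$, a simplex of $X$, and second-coordinate projection contained in $g(\sigma)$, a simplex of $Y$; hence it is a simplex of $X \times Y$.

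For (b), rather than repeat the argument I would deduce it from (a) and (c). Writing $p_1 \colon K_1 \times K_2 \to K_1$ and $p_2 \colon K_1 \times K_2 \to K_2$ for the (simplicial) projections, we have $f \times g = (f \circ p_1,\, g \circ p_2)$. From $f \simeq f'$, part (a) gives $f \circ p_1 \simeq f' \circ p_1$, and likewise $g \circ p_2 \simeq g' \circ p_2$; then part (c) gives $(f \circ p_1, g \circ p_2) \simeq (f' \circ p_1, g' \circ p_2)$, which is exactly $f \times g \simeq f' \times g'$. Alternatively one can argue (b) directly in the same manner as (c), since the coordinate projections of $(f \times g)(\tau) \cup (f' \times g)(\tau)$ are contained in simplices of $X_1$ and $X_2$ respectively.

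I do not expect a genuine obstacle here — these are routine bookkeeping facts, which is presumably why the paper records them ``mainly for convenience of reference.'' The only points requiring a little care are the passage from $\simeq$ back to finite chains of single contiguities (so that the one-step lemmas can be applied termwise) and the defining property of simplices in a categorical product, namely that a set of ordered pairs is a simplex precisely when each of its coordinate projections is; this is what makes the one-step claims in (b) and (c) essentially immediate.
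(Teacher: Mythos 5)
Your proof is correct and follows essentially the same route as the paper's: establish the one-step (single-contiguity) versions of each claim and then chain them, varying one map at a time, using the definition of simplices in the categorical product for (b) and (c). The only organizational difference is in part (b), where the paper argues directly on a simplex $\sigma=\{(x_0,y_0),\ldots,(x_n,y_n)\}$ of $K_1\times K_2$ by projecting to $\sigma_1$ and $\sigma_2$, whereas you deduce (b) from (a) and (c) via the identity $f\times g=(f\circ p_1,\,g\circ p_2)$ with the simplicial projections $p_1,p_2$; both are valid, and you note the direct argument as an alternative in any case.
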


\begin{proof}
(a) First suppose that we have $f\sim f'$ and $g \sim g'$. Say $\sigma$ is a simplex of $X$.  Because $f \sim f'$, we have $f(\sigma) \cup f'(\sigma)$ contained in some simplex $\sigma'$ of $Y$. Then we have $g\circ f(\sigma) \cup g'\circ f'(\sigma) \subseteq g(\sigma') \cup g'(\sigma')$, which is contained in some simplex $\sigma''$ of $Z$, as $g \sim g'$. Thus, we have $g\circ f \sim g' \circ f'$.  Now, given contiguity equivalences $f \sim f_1 \sim \cdots \sim f'$ and $g \sim g_1 \sim \cdots \sim g'$, we may write a contiguity equivalence of the form
$$g\circ f \sim g\circ f_1 \sim \cdots \sim g\circ f' \sim g_1\circ f' \sim \cdots \sim g'\circ f'.$$

(b)  First suppose that we have $f\sim f'$ and $g \sim g'$. Say $\sigma$ is a simplex of $K_1 \times K_2$. This means
$\sigma = \left\{ (x_0, y_0), \ldots, (x_n, y_n) \right\}$ with $\sigma_1 := \{ x_0, \ldots, x_n \}$ a simplex of $K_1$ and $\sigma_2 := \{ y_0, \ldots, y_n \}$ a simplex of $K_2$.
Then we have 
$$\begin{aligned}    
(f\times g)(\sigma) &\cup (f'\times g')(\sigma) \\
&= 
\left\{ (f(x_0), g(y_0)), \ldots, (f(x_n), g(y_n)),
(f'(x_0), g'(y_0)), \ldots, (f'(x_n), g'(y_n))\right\}.
\end{aligned}$$
Because $f\sim f'$, we have 
$$f(\sigma_1) \cup f'(\sigma_1) = \{ f(x_0), \ldots, f(x_n), 
f'(x_0), \ldots, f'(x_n)\}$$
is a simplex of $X_1$. Because $g\sim g'$, we have 
$$g(\sigma_2) \cup g'(\sigma_2) = \{ g(y_0), \ldots, g(y_n), 
g'(y_0), \ldots, g'(y_n)\}$$
is a simplex of $X_2$. It follows that $(f\times g)(\sigma) \cup (f'\times g')(\sigma)$ is a simplex of $X_1 \times X_2$: we have $f\times g \sim f'\times g'$. 

Using this, suppose given contiguity equivalences $f \sim f_1 \sim \cdots \sim f'$ and $g \sim g_1 \sim \cdots \sim g'$.  Then  we may write a contiguity equivalence of the form
$$f\times g \sim f_1\times g \sim \cdots \sim f'\times g \sim f'\times g_1 \sim \cdots \sim f'\times g'.$$

(c) First suppose that we have $f\sim f'\colon K \to X$ and $g \sim g'\colon K \to Y$. Say $\sigma = \{ v_1, \ldots, v_r \}$ a simplex of $K$.
Then $f(\sigma) \cup f'(\sigma)$ is a simplex of $X$ and $g(\sigma) \cup g'(\sigma)$ is a simplex of $Y$.  We have 
$$(f, g)(\sigma) \cup (f', g')(\sigma) = \left\{  \left( f(v_i), g(v_i) \right)_{i=1, \ldots, r} , \left( f'(v_i), g'(v_i) \right)_{i=1, \ldots, r}  \right\},$$
which is a simplex of $X \times Y$ since the union of all the first coordinates is  $f(\sigma) \cup f'(\sigma)$, a simplex of $X$, whilst the union of all the second coordinates is $g(\sigma)\cup g'(\sigma)$, a simplex of $Y$. Thus we have $(f, g) \sim (f', g')\colon K \to X \times Y$.

Now suppose given contiguity equivalences $f \sim f_1 \sim \cdots \sim f'$ and $g \sim g_1 \sim \cdots \sim g'$.  Then  we may write a contiguity equivalence of the form
$$(f, g) \sim (f_1, g) \sim \cdots \sim (f', g) \sim (f',  g_1) \sim \cdots \sim (f', g')$$
and the assertion follows.     
\end{proof}

For our purposes, we need to be able to compare not only simplicial maps from a fixed $I_m \times I_n$ to a given simplicial complex $X$, but also maps from different sized $I_m \times I_n$ to $X$.  For this, we use the notion of trivial extension and the more general notions of (repeated) row- and column-doubling.      We develop these ideas and some basic ways of operating with face spheres in the next several results.   

\begin{definition}\label{def: extensions}
For each $m$, define simplicial maps $\alpha_i \colon I_{m+1} \to I_m$ for $i = 0, \ldots, m$ as follows:
$$\alpha_i(s) = 
\begin{cases}
s & 0 \leq s \leq i \\
 & \\
s-1 & i+1 \leq s \leq m+1 \\
\end{cases}
$$
Now for a simplicial map $l\colon I_m \to X$, i.e., a \emph{path in $X$}, we refer to a composition $l \circ \alpha_i \colon I_{m+1} \to X$ as an \emph{extension} of $l$. Whereas the map $\alpha_i$ shrinks the interval, the composition $l \circ \alpha_i$ is the path obtained from $l$ by repeating the $i$th vertex.  More generally, if $I = \{ i_1, \ldots, i_r \}$ is a sequence with $0 \leq i_t \leq m+t-1$ for each $1 \leq t \leq r$, we write 
$$\alpha_I := \alpha_{i_1} \circ \alpha_{i_2} \circ \cdots \circ \alpha_{i_r} \colon I_{m+r} \to I_m$$
and also refer to $l\circ \alpha_I\colon I_{m+r} \to X$ as an extension of $l$. It is the path obtained from $l$ by repeating the $i_r$th vertex, then repeating the $i_{r-1}$st vertex of that extended path, and so-on. If $I = \{ i, \ldots, i \}$ ($r$-times), then we write $\alpha^r_i$ for $\alpha_I = \alpha_i \circ \cdots \circ \alpha_i$. Then $l\circ \alpha^r_i \colon I_{m+r} \to X$ is the path obtained from $l$ by repeating $r$-times the $i$th vertex of $l$. We distinguish the case in which $i=m$ by referring to $l\circ \alpha^r_m$ as a \emph{trivial extension} of the path $l \colon I_m \to X$.
\end{definition}

Now suppose we have  a face sphere $f \colon \left( I_{m} \times I_{n}, \partial (I_{m} \times I_{n}) \right) \to (X, x_0)$. Then the composition
$$f \circ (\alpha^r_i \times \alpha^s_j) \colon \left( I_{m+r} \times I_{n+s}, \partial (I_{m+r} \times I_{n+s}) \right) \to (X, x_0)$$
is the face sphere of size $(m+r)\times (n+s)$ obtained from $f$---when viewed ``array-style" as an $(m+1)\times (n+1)$ array of values in $X$, as in \figref{fig: simplicial map}---by repeating the $i$th column of values $r$ times and the $j$th row of values $s$ times.
In particular, we will refer to the compositions $f \circ (\alpha^r_m \times \alpha^s_n)$ as \emph{trivial extensions of the face sphere $f$}.  We will often denote a trivial extension of $f$ by $\overline{f}$.  Thus, if we have 
$\overline{f} = f \circ (\alpha^r_m \times \alpha^s_n) \colon \left(I_{m+r} \times I_{n+s}, \partial (I_{m+r} \times I_{n+s}) \right) \to (X,x_0)$ a trivial extension of $f \colon \left(I_m \times I_n, \partial (I_m \times I_n) \right) \to (X,x_0)$, then the vertex map of $\overline{f}$ is given by
\[ \overline{f}(i, j) = \begin{cases}
f(i, j) & \text{ if $(i, j)\in I_m \times I_n$,} \\
x_0 & \text{ otherwise.}
\end{cases} \]

We may add the following to our basic results about contiguity equivalence.

\begin{proposition}\label{prop: contiguity results2} 
Using the notation and vocabulary from above, we have the following.
    \begin{itemize}
\item[(a)] We have a contiguity equivalence $\alpha_i \simeq \alpha_j \colon I_{m+1} \to I_m$ for any $0 \leq i < j \leq m$.
\item[(b)] Suppose $\alpha_I, \alpha_J \colon I_{m+r} \to I_m$  are any maps of the form
$$\alpha_I := \alpha_{i_1} \circ \cdots \circ \alpha_{i_r} \qquad \mathrm{and} \qquad \alpha_J := \alpha_{j_1} \circ \cdots \circ \alpha_{j_r},$$
and $\alpha_{I'}, \alpha_{J'} \colon I_{n+s} \to I_n$  are any maps of the form
$$\alpha_{I'} := \alpha_{i'_1} \circ \cdots \circ \alpha_{i'_s} \qquad \mathrm{and} \qquad \alpha_{J'} := \alpha_{j'_1} \circ \cdots \circ \alpha_{j'_s}.$$
Then we have a contiguity equivalence  $\alpha_I \times \alpha_{I'} \simeq \alpha_J \times \alpha_{J'} \colon I_{m+r} \times I_{n+s}  \to I_m \times I_n$. 
    \end{itemize}
\end{proposition}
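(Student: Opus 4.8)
The plan is to prove (a) directly from the definition of contiguity, and then to deduce (b) from (a) together with \propref{prop: contiguity results1} and a short induction.

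For (a), the first step is to verify that $\alpha_k$ and $\alpha_{k+1}$ are contiguous as maps $I_{m+1}\to I_m$ whenever $0\le k\le m-1$. Since it suffices to check contiguity on the maximal simplices of $I_{m+1}$ — its edges $\{s,s+1\}$ — and since $\alpha_k$ and $\alpha_{k+1}$ agree at every vertex except $s=k+1$, where $\alpha_k(k+1)=k$ and $\alpha_{k+1}(k+1)=k+1$, the only edges on which the two maps can differ are $\{k,k+1\}$ and $\{k+1,k+2\}$ (both are edges of $I_{m+1}$ because $k+2\le m+1$). On $\{k,k+1\}$ the combined image is $\{k\}\cup\{k,k+1\}=\{k,k+1\}$, and on $\{k+1,k+2\}$ it is $\{k,k+1\}\cup\{k+1\}=\{k,k+1\}$; both are edges of $I_m$. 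On every other edge the two maps agree, so the combined image is automatically a simplex. Hence $\alpha_k\sim\alpha_{k+1}$. Now for $0\le i<j\le m$, concatenating these contiguities gives $\alpha_i\sim\alpha_{i+1}\sim\cdots\sim\alpha_j$ — every index occurring is at most $j\le m$, so every map in the chain is a legitimate map $I_{m+1}\to I_m$ of the required form — and therefore $\alpha_i\simeq\alpha_j$.

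For (b), by \propref{prop: contiguity results1}(b) it suffices to prove the one-variable assertion: any two extension maps $\alpha_I=\alpha_{i_1}\circ\cdots\circ\alpha_{i_r}$ and $\alpha_J=\alpha_{j_1}\circ\cdots\circ\alpha_{j_r}$ from $I_{m+r}$ to $I_m$, in the sense of \defref{def: extensions}, satisfy $\alpha_I\simeq\alpha_J$; applying this once to $(\alpha_I,\alpha_J)$ and once to $(\alpha_{I'},\alpha_{J'})$ and then forming the product gives the claim. I would prove the one-variable assertion by induction on $r\ge 1$. The base case $r=1$ is exactly part (a) (after interchanging the roles of $i_1,j_1$ if necessary, and trivial if $i_1=j_1$). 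For the inductive step, write $\alpha_I=\alpha_{i_1}\circ\beta$ and $\alpha_J=\alpha_{j_1}\circ\gamma$ with $\beta=\alpha_{i_2}\circ\cdots\circ\alpha_{i_r}$ and $\gamma=\alpha_{j_2}\circ\cdots\circ\alpha_{j_r}$; these are extension maps $I_{m+r}\to I_{m+1}$ of length $r-1$ in the sense of \defref{def: extensions}, because the bound $0\le i_{t+1}\le m+t=(m+1)+t-1$ for $1\le t\le r-1$ is precisely the admissibility condition for extensions of maps out of $I_{m+1}$, while $0\le i_1\le m$ makes $\alpha_{i_1}$ a legitimate map $I_{m+1}\to I_m$. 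By the inductive hypothesis $\beta\simeq\gamma$, by part (a) $\alpha_{i_1}\simeq\alpha_{j_1}$, and \propref{prop: contiguity results1}(a) then gives $\alpha_I=\alpha_{i_1}\circ\beta\simeq\alpha_{j_1}\circ\gamma=\alpha_J$.

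I do not expect a genuine obstacle: the entire content is the single contiguity $\alpha_k\sim\alpha_{k+1}$ in (a), with everything else being the formal closure properties already packaged in \propref{prop: contiguity results1} together with the induction. The one point requiring a little care is the index bookkeeping in the inductive step of (b) — verifying that the tail $(i_2,\dots,i_r)$ obeys the bounds appropriate for extensions into $I_{m+1}$ rather than $I_m$ — but this is immediate from the inequalities defining an extension in \defref{def: extensions}.
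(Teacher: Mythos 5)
Your proof is correct and follows essentially the same route as the paper: part (a) reduces to the single contiguity $\alpha_k\sim\alpha_{k+1}$ checked on edges, and part (b) is deduced from \propref{prop: contiguity results1} together with (a). The only difference is that the paper dispatches (b) with a one-line citation, whereas you spell out the induction on $r$ and the index bookkeeping explicitly, which is a harmless (and arguably welcome) elaboration.
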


\begin{proof}
(a) It is sufficient to show that we have $\alpha_i \sim \alpha_{i+1}$ for each $0 \leq i \leq m-1$. To this end, consider the typical simplex $\sigma = 
\{ s, s+1 \}$ of $I_{m+1}$, some  $0 \leq s \leq m$. Then a direct check gives that
$$\begin{aligned}
    \alpha_i(\sigma) \cup \alpha_{i+1}(\sigma) & = \left\{ \alpha_i(s), \alpha_i(s+1), \alpha_{i+1}(s), \alpha_{i+1}(s+1)\right\}\\
    &= \begin{cases}
        \{ s, s+1, s, s+1 \} = \{s, s+1\} & 0 \leq s \leq i-1 \\
        \{ i, i+1, i, i \} = \{i, i+1\} & s = i \\
         \{ i, i+1, i+1, i+1 \} = \{i, i+1\} & s = i+1 \\
         \{ s-1, s, s-1, s \} = \{s-1, s\} & i+2 \leq s \leq m. 
    \end{cases}
\end{aligned}$$
In all cases, we see that $\alpha_i(\sigma) \cup \alpha_{i+1}(\sigma)$ is a simplex of $I_m$. Namely, we have $\alpha_i \sim \alpha_{i+1}$.

(b) This follows directly from  \propref{prop: contiguity results1} and part (a) above.
\end{proof}

In the next result, by $\alpha_j^0$ we intend the identity map $\alpha_j^0 = \mathrm{id} \colon I_m \to I_m$.

\begin{corollary}\label{cor: different doublings}
 Let  $f \colon I_m \times I_n \to X$ be a simplicial map. For given $r, s \geq 0$, we have a contiguity equivalence
 $$f \circ (\alpha^r_i \times \alpha^s_j) \simeq f \circ (\alpha^r_k \times \alpha^s_l)\colon I_{m+r} \times I_{n+s} \to X$$
 for each $0 \leq i, k \leq m$ and each $0 \leq j, l \leq n$. 
\end{corollary}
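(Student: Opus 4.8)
The plan is to deduce this directly from \propref{prop: contiguity results2}(b) together with \propref{prop: contiguity results1}(a); there is essentially no new content to prove, only a matter of recognizing the maps in play as instances of those covered by the preceding results. First I would observe that, by \defref{def: extensions}, each map $\alpha^r_i \colon I_{m+r} \to I_m$ is the composition $\alpha_i \circ \cdots \circ \alpha_i$ of $r$ copies of $\alpha_i$, and hence is precisely a map of the form $\alpha_I$ figuring in \propref{prop: contiguity results2}(b), namely with $I$ the constant sequence $(i, \ldots, i)$. This sequence does satisfy the required index bound, since $0 \leq i \leq m \leq m+t-1$ for every $1 \leq t \leq r$. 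The same remark applies to $\alpha^r_k \colon I_{m+r} \to I_m$ and to $\alpha^s_j, \alpha^s_l \colon I_{n+s} \to I_n$.

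Given this, \propref{prop: contiguity results2}(b) applies verbatim and yields a contiguity equivalence
$$\alpha^r_i \times \alpha^s_j \simeq \alpha^r_k \times \alpha^s_l \colon I_{m+r} \times I_{n+s} \to I_m \times I_n.$$
I would then post-compose with $f$. Since contiguity equivalence is reflexive we have $f \simeq f \colon I_m \times I_n \to X$, so applying \propref{prop: contiguity results1}(a) to this pair and to the pair just obtained gives
$$f \circ (\alpha^r_i \times \alpha^s_j) \simeq f \circ (\alpha^r_k \times \alpha^s_l) \colon I_{m+r} \times I_{n+s} \to X,$$
which is exactly the assertion of the corollary.

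The only point requiring a word of care—and the closest thing to an obstacle—is the degenerate cases $r = 0$ or $s = 0$, in which the convention $\alpha^0_i = \mathrm{id}$ is in force and the "empty composition" is not literally one of the maps named in \propref{prop: contiguity results2}(b). These cases are immediate, though: if $r = 0$ then both sides carry the identity in the first factor and one is comparing $f \circ (\mathrm{id} \times \alpha^s_j)$ with $f \circ (\mathrm{id} \times \alpha^s_l)$, which is handled by running the identical argument on the second factor alone (and symmetrically when $s = 0$; if both vanish the two sides coincide). So I expect the write-up to be short, the substance being carried entirely by \propref{prop: contiguity results1} and \propref{prop: contiguity results2}, with the only bookkeeping being the identification of $\alpha^r_i$ as a constant-sequence $\alpha_I$ and the dispatch of the degenerate cases.
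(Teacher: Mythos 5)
Your proposal is correct and follows essentially the same route as the paper, which likewise deduces the corollary directly from part (b) of \propref{prop: contiguity results2} together with part (a) of \propref{prop: contiguity results1}, identifying $\alpha^r_i$ as the constant-sequence instance of $\alpha_I$. Your extra attention to the degenerate cases $r=0$ or $s=0$ is a small refinement the paper passes over silently, but it introduces no divergence in method.
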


\begin{proof}
The assertion is that repeating different columns the same number of times and/or repeating different rows the same number of times leads to contiguity equivalent maps. 

The result follows directly from part (a) of \propref{prop: contiguity results1} and part (b) of \propref{prop: contiguity results2}  
\end{proof}

Now we give our basic notion of equivalence for face spheres. 

\begin{definition}
Given simplicial maps $f\colon  \left(I_m \times I_n, \partial (I_m \times I_n) \right) \to (X,x_0)$ and $g\colon \left(I_{m'} \times I_{n'}, \partial (I_{m'} \times I_{n'}) \right) \to (X,x_0)$, we  say that $f$ and $g$ are \emph{extension-contiguity equivalent}, and write $f \approx g$, when for some $\bar m \ge \max(m,m')$ and $\bar n \ge \max(n,n')$ there is a contiguity equivalence (relative the boundary)  
$$f\circ (\alpha_m^{\overline{m} - m} \times \alpha_n^{\overline{n} - n}) \simeq g\circ (\alpha_{m'}^{\overline{m} - m'} \times \alpha_{n'}^{\overline{n} - n'})$$
of trivial extensions of $f$ and $g$.
\end{definition}

\begin{theorem}\label{thm: extn-cont equiv}
Extension-contiguity equivalence of maps is an equivalence relation on the set of maps $\left(I_m \times I_n, \partial (I_m \times I_n) \right) \to (X,x_0)$ (all shapes and sizes of rectangle).
\end{theorem}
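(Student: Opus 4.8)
The plan is to verify reflexivity, symmetry, and transitivity, of which only transitivity requires any real work. For reflexivity, given $f\colon \left(I_m\times I_n,\partial(I_m\times I_n)\right)\to(X,x_0)$ one takes $\bar m=m$ and $\bar n=n$, so that the trivial extensions appearing in the definition are $f\circ(\alpha_m^{0}\times\alpha_n^{0})=f\circ(\id\times\id)=f$, and $f\simeq f$ trivially. Symmetry is immediate from the shape of the definition: the condition ``there exist $\bar m\ge\max(m,m')$ and $\bar n\ge\max(n,n')$ with $f\circ(\alpha_m^{\bar m-m}\times\alpha_n^{\bar n-n})\simeq g\circ(\alpha_{m'}^{\bar m-m'}\times\alpha_{n'}^{\bar n-n'})$'' is visibly symmetric in $f$ and $g$, and $\simeq$ is itself symmetric.

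For transitivity, suppose $f\colon I_m\times I_n\to X$, $g\colon I_{m'}\times I_{n'}\to X$ and $h\colon I_{m''}\times I_{n''}\to X$ with $f\approx g$ witnessed at some size $(\bar m_1,\bar n_1)$ and $g\approx h$ witnessed at some size $(\bar m_2,\bar n_2)$. Set $\bar m=\max(\bar m_1,\bar m_2)$ and $\bar n=\max(\bar n_1,\bar n_2)$, so that $\bar m\ge\max(m,m'')$, $\bar n\ge\max(n,n'')$, and $(\bar m,\bar n)$ is an admissible size at which to try to witness $f\approx h$. The first point I would record is that a trivial extension of a trivial extension of a face sphere is again a trivial extension of it, to the larger size: from the explicit description of a trivial extension recalled above --- the vertex map of $\bar f$ equals $f$ on $I_m\times I_n$ and $x_0$ elsewhere (equivalently, $\alpha_m^{r}\circ\alpha_{m+r}^{s}=\alpha_m^{r+s}$) --- one sees at once that trivially extending $f\circ(\alpha_m^{\bar m_1-m}\times\alpha_n^{\bar n_1-n})$ up to size $(\bar m,\bar n)$ yields exactly $f\circ(\alpha_m^{\bar m-m}\times\alpha_n^{\bar n-n})$, and likewise for $g$ and for $h$. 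In particular the map $g\circ(\alpha_{m'}^{\bar m-m'}\times\alpha_{n'}^{\bar n-n'})$ is obtained in this way both from the $(\bar m_1,\bar n_1)$-sized trivial extension of $g$ and from the $(\bar m_2,\bar n_2)$-sized one.

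The second point I would record is that contiguity equivalence rel boundary is preserved under trivial extension: if $\phi\simeq\psi\colon\left(I_a\times I_b,\partial(I_a\times I_b)\right)\to(X,x_0)$ then $\phi\circ(\alpha_a^{c}\times\alpha_b^{d})\simeq\psi\circ(\alpha_a^{c}\times\alpha_b^{d})$ rel $\partial(I_{a+c}\times I_{b+d})$. This follows from \propref{prop: contiguity results1}(a) (compose the given equivalence with the reflexive equivalence of $\alpha_a^{c}\times\alpha_b^{d}$ with itself), together with the elementary fact that $\alpha_a^{c}\times\alpha_b^{d}$ carries $\partial(I_{a+c}\times I_{b+d})$ into $\partial(I_a\times I_b)$, so that every map in the resulting contiguity sequence still collapses the boundary to $x_0$. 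Applying this to each of the two witnessing equivalences and then splicing them at the common middle map $g\circ(\alpha_{m'}^{\bar m-m'}\times\alpha_{n'}^{\bar n-n'})$ produces a contiguity equivalence rel boundary
$$f\circ(\alpha_m^{\bar m-m}\times\alpha_n^{\bar n-n})\simeq h\circ(\alpha_{m''}^{\bar m-m''}\times\alpha_{n''}^{\bar n-n''}),$$
which is precisely the statement $f\approx h$. The argument is essentially formal; the only step that calls for a little care --- and the main potential pitfall --- is the bookkeeping required to bring the two separately chosen witnesses for $f\approx g$ and $g\approx h$ onto a single common size while keeping everything relative to the boundary, which is exactly what the two recorded observations handle.
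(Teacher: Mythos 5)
Your proof is correct and takes essentially the same route as the paper's: both handle transitivity by further trivially extending the two witnessing contiguity equivalences to a common size, using the identity $\alpha_m^{r}\circ\alpha_{m+r}^{s}=\alpha_m^{r+s}$ to see that the two resulting extensions of $g$ coincide, invoking \propref{prop: contiguity results1}(a) to preserve contiguity equivalence under precomposition, and then splicing. The only (immaterial) difference is bookkeeping --- you take the common size to be the maximum of the two witnessing sizes, while the paper takes their ``sum over $g$'' --- and you additionally make explicit the rel-boundary point, which the paper leaves tacit.
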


\begin{proof}
Reflexivity and symmetry follow immediately because contiguity equivalence (relative the boundary) of maps is an equivalence relation. If $f \simeq g$, then we have a contiguity equivalence 
$f \circ (\alpha^r_m \times \alpha^s_n) \simeq g \circ (\alpha^r_m \times \alpha^s_n)$ of trivial extensions, for any $r$ and $s$, by
part (a) of \propref{prop: contiguity results1}
So, suppose we have maps $f_1\colon \left(I_{m_1} \times I_{n_1}, \partial (I_{m_1} \times I_{n_1}) \right) \to (X,x_0)$, $f_2\colon \left(I_{m_2} \times I_{n_2}, \partial (I_{m_2} \times I_{n_2}) \right) \to (X,x_0)$ and $f_3\colon \left(I_{m_3} \times I_{n_3}, \partial (I_{m_3} \times I_{n_3}) \right) \to (X,x_0)$, and that $f_1\approx f_2 \approx f_3$.  
Because $f_1\approx f_2$, we have contiguity equivalent trivial extensions $f_1 \circ (\alpha^{r_1}_{m_1} \times \alpha^{s_1}_{n_1}) \simeq  f_2 \circ (\alpha^{r_2}_{m_2} \times \alpha^{s_2}_{n_2})$ for some $r_1, r_2, s_1$ and $s_2$. Note that this entails the equalities $m_1+r_1 = m_2 + r_2$ and $n_1+s_1 = n_2 + s_2$.  Likewise, we have contiguity equivalent trivial extensions $f_2 \circ (\alpha^{r'_2}_{m_2} \times \alpha^{s'_2}_{n_2}) \simeq  f_3 \circ (\alpha^{r_3}_{m_3} \times \alpha^{s_3}_{n_3})$ for some $r'_2, r_3, s'_2$ and $s_3$.
Now part (a) of \propref{prop: contiguity results1} gives contiguity equivalences
$$f_1 \circ (\alpha^{r_1}_{m_1} \times \alpha^{s_1}_{n_1}) \circ (\alpha^{r'_2}_{m_2+r_2} \times \alpha^{s'_2}_{n_2+s_2}) \simeq  f_2 \circ (\alpha^{r_2}_{m_2} \times \alpha^{s_2}_{n_2}) \circ (\alpha^{r'_2}_{m_2+r_2} \times \alpha^{s'_2}_{n_2+s_2})$$
and
$$f_2 \circ (\alpha^{r'_2}_{m_2} \times \alpha^{s'_2}_{n_2}) \circ (\alpha^{r_2}_{m_2+r'_2} \times \alpha^{s_2}_{n_2+s'_2}) \simeq  f_3 \circ (\alpha^{r_3}_{m_3} \times \alpha^{s_3}_{n_3}) \circ (\alpha^{r_2}_{m_2+r'_2} \times \alpha^{s_2}_{n_2+s'_2}).$$
Furthermore, we may write
$$ (\alpha^{r_1}_{m_1} \times \alpha^{s_1}_{n_1}) \circ (\alpha^{r'_2}_{m_2+r_2} \times \alpha^{s'_2}_{n_2+s_2}) =  (\alpha^{r_1}_{m_1} \times \alpha^{s_1}_{n_1}) \circ (\alpha^{r'_2}_{m_1+r_1} \times \alpha^{s'_2}_{n_1+s_1}) = \alpha^{r_1+r'_2}_{m_1} \times \alpha^{s_1+s'_2}_{n_1},$$
$$(\alpha^{r_2}_{m_2} \times \alpha^{s_2}_{n_2}) \circ (\alpha^{r'_2}_{m_2+r_2} \times \alpha^{s'_2}_{n_2+s_2}) =   \alpha^{r_2+r'_2}_{m_2} \times \alpha^{s_2+s'_2}_{n_2} = (\alpha^{r'_2}_{m_2} \times \alpha^{s'_2}_{n_2}) \circ (\alpha^{r_2}_{m_2+r'_2} \times \alpha^{s_2}_{n_2+s'_2}),$$
and
$$(\alpha^{r_3}_{m_3} \times \alpha^{s_3}_{n_3}) \circ (\alpha^{r_2}_{m_2+r'_2} \times \alpha^{s_2}_{n_2+s'_2}) =  (\alpha^{r_3}_{m_3} \times \alpha^{s_3}_{n_3}) \circ (\alpha^{r_2}_{m_3+r_3} \times \alpha^{s_2}_{n_3+s_3}) = \alpha^{r_3+r_2}_{m_3} \times \alpha^{s_3+s_2}_{n_3}.$$
Thus we have contiguity equivalences of trivial extensions
$$f_1\circ (\alpha^{r_1+r'_2}_{m_1} \times \alpha^{s_1+s'_2}_{n_1}) \simeq f_2 \circ  (\alpha^{r_2+r'_2}_{m_2} \times \alpha^{s_2+s'_2}_{n_2}) \simeq f_3 \circ (\alpha^{r_3+r_2}_{m_3} \times \alpha^{s_3+s_2}_{n_3}).$$
Since contiguity equivalence is transitive, we have $\bar{f_1} \simeq \bar{f_3}$ and the result follows.
\end{proof}

\section{Definition of $F(X,x_0)$, the Face Group of a Simplicial Complex}\label{sec: defn of Face Group}

We now give the definition of our Face Group and establish its basic general properties.

\begin{definition}
Given a based simplicial complex $(X,x_0)$, the \emph{Face Group} of $(X,x_0)$, written $F(X,x_0)$, is the set of equivalence classes of simplicial maps of the form 
$$\left(I_{m} \times I_{n}, \partial (I_{m} \times I_{n}) \right)\to (X,x_0)$$
(namely, face spheres in $X$) for all  $I_{m} \times I_{n}$, modulo the equivalence relation of extension-contiguity equivalence.  We write the equivalence class of a face sphere $f$ as $[f] \in F(X, x_0)$.
\end{definition}

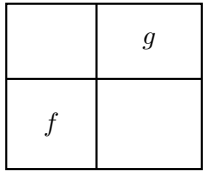
\begin{figure}
\[
\begin{tikzpicture}[scale=.2]
\draw[thick] (0,0) rectangle (13,11);
\draw[thick] (0,6) -- (13,6);
\draw[thick] (6,0) -- (6,11);

%\fill[white] (0,0) rectangle (6,6);
\node at (3,3) {$f$};

%\fill[white] (6,6) rectangle (13,11);
\node at (9.5,8.5) {$g$};
\end{tikzpicture}
\]
\caption{Schematic of the product $f\cdot g$ of two maps $f$ and $g$. }\label{cdotfig}
\end{figure}

The group operation in $F(X,x_0)$ is induced by the following operation on maps. Let $f\colon \left(I_{m} \times I_{n}, \partial (I_{m} \times I_{n}) \right) \to (X,x_0)$ and $g\colon \left(I_{r} \times I_{s}, \partial (I_{r} \times I_{s}) \right) \to (X,x_0)$ be simplicial maps.  Recall our translation notation $T_{p, q}$ from \defref{def: translation Tpq}. Define $f\cdot g\colon \left(I_{m+r+1} \times I_{n+s+1}, \partial (I_{m+r+1} \times I_{n+s+1}) \right)\to (X, x_0)$ on vertices by
$$
(f\cdot g)(i,j)=
    \begin{cases}
        f(i,j) & \text{if } (i,j)\in I_m\times I_n\\
        g\circ T_{m+1, n+1}(i, j) & \text{if } (i,j)\in [m+1,m+r+1]\times [n+1,n+s+1]\\
        x_0 & \text{otherwise}
    \end{cases}
$$
and extend as a simplicial map over each simplex of $I_{m+r+1} \times I_{n+s+1}$. 
See Figure~\ref{cdotfig} for an illustration.
In this figure and subsequent similar ones, the vertices of $I_{m} \times I_{n}$---and only those vertices---are intended to be contained in the lower-left rectangle, which thus would actually be a rectangle $[-0.5, m+0.5]\times [-0.5, n+0.5]$.  Then the vales of $f$ are assigned to the vertices in this square. As $f$ is a face sphere, note that the vertices immediately inside this square will be assigned $x_0$.  All vertices in any rectangle left blank in such a picture are assigned the value $x_0$. We abuse notation slightly by labeling the upper-right square with $g$.  Strictly speaking, these vertices are assigned the values of the map $g\circ T_{m+1, n+1}$.    

\begin{proposition}\label{prop: operation well-defined} Suppose we have simplicial maps 
$$f_1\colon \left( I_{m_1}\times I_{n_1}, \partial (I_{m_1}\times I_{n_1}) \right) \to (X, x_0), \quad   f_2\colon \left( I_{m_2}\times I_{n_2}, \partial (I_{m_2}\times I_{n_2}) \right)\to (X, x_0),$$
and
$$g_1\colon \left( I_{r_1}\times I_{s_1}, \partial (I_{r_1}\times I_{s_1}) \right)\to (X, x_0), \quad g_2\colon \left( I_{r_2}\times I_{s_2}, \partial (I_{r_2}\times I_{s_2}) \right) \to (X, x_0)$$
with $f_1\approx f_2$ and $g_1\approx g_2$. Then $f_1\cdot g_1\approx f_2 \cdot g_2$.
\end{proposition}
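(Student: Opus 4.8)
The plan is to reduce the claim to two one-sided statements and two explicit identities relating the product $f\cdot g$ to (trivial) extensions of the factors; throughout I write $f$ for a face sphere on $I_m\times I_n$ and $g$ for one on $I_r\times I_s$, matching the definition of $\cdot$.

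\emph{Reduction.} Since $\approx$ is transitive (\thmref{thm: extn-cont equiv}), it suffices to prove separately that (i) $f_1\approx f_2$ implies $f_1\cdot g\approx f_2\cdot g$ for fixed $g$, and (ii) $g_1\approx g_2$ implies $f\cdot g_1\approx f\cdot g_2$ for fixed $f$; then $f_1\cdot g_1\approx f_2\cdot g_1\approx f_2\cdot g_2$. Unwinding the definition of $\approx$, the hypothesis $f_1\approx f_2$ provides trivial extensions $\overline{f_1}$ of $f_1$ and $\overline{f_2}$ of $f_2$ over a common domain with $\overline{f_1}\simeq\overline{f_2}$ (a contiguity equivalence relative the boundary). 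So (i) will follow from two facts: (a) a relative contiguity equivalence $h_1\simeq h_2$ of face spheres on a common domain induces $h_1\cdot g\simeq h_2\cdot g$; and (b) $f\cdot g\approx\overline f\cdot g$ for every trivial extension $\overline f$ of $f$ — because then $f_1\cdot g\approx\overline{f_1}\cdot g\simeq\overline{f_2}\cdot g\approx f_2\cdot g$, and we invoke transitivity of $\approx$. Statement (ii) follows the same way from (a) and the analogous fact (b$'$): $f\cdot g\approx f\cdot\overline g$ for every trivial extension $\overline g$ of $g$.

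\emph{Proof of (a).} This is an immediate application of \lemref{localized contiguity equiv}. In the common domain $I_{p+r+1}\times I_{q+s+1}$ of $h_1\cdot g$ and $h_2\cdot g$ (with $h_1,h_2$ on $I_p\times I_q$), the sub-complex $R=[0,p]\times[0,q]$ is rectangular, $(h_i\cdot g)|_R=h_i$ is a map of pairs $(R,\partial R)\to(X,x_0)$, and $h_1\simeq h_2$ relative $\partial R$. Patching this equivalence into $h_1\cdot g$ via \lemref{localized contiguity equiv} leaves the values on all vertices outside $R$ unchanged — these are all $x_0$ or values of $g$, identically in the two maps — and produces precisely $h_2\cdot g$; hence $h_1\cdot g\simeq h_2\cdot g$.

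\emph{Proof of (b) and (b$'$).} These are the crux, and I would establish them by a direct comparison of vertex maps with the defining formula for $\cdot$. Write $\overline f=f\circ(\alpha_m^a\times\alpha_n^b)$. Tracking how the collapsing maps $\alpha_m^a$ and $\alpha_n^b$ reindex the three regions in the definition of $\cdot$ (the $f$-block, the $g$-block, and the $x_0$-padding), one checks the equality of simplicial maps
\[
\overline f\cdot g \;=\; (f\cdot g)\circ(\alpha_m^a\times\alpha_n^b)\colon \left(I_{m+a+r+1}\times I_{n+b+s+1},\;\partial(I_{m+a+r+1}\times I_{n+b+s+1})\right)\to(X,x_0),
\]
where on the right $\alpha_m^a,\alpha_n^b$ insert copies of column $m$ and row $n$ of $f\cdot g$ — both of which carry only the value $x_0$, since they lie in $\partial(I_m\times I_n)$ and $f$ is a face sphere. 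The right-hand side is an extension, but not a \emph{trivial} extension, of $f\cdot g$; however \corref{cor: different doublings} gives a contiguity equivalence of it with the genuine trivial extension $(f\cdot g)\circ(\alpha_{m+r+1}^a\times\alpha_{n+s+1}^b)$. Since any map is extension-contiguity equivalent to each of its trivial extensions (immediate from the definition of $\approx$), we conclude $\overline f\cdot g\approx f\cdot g$, which is (b). For (b$'$), with $\overline g=g\circ(\alpha_r^c\times\alpha_s^d)$, the parallel computation yields
\[
f\cdot\overline g \;=\; (f\cdot g)\circ(\alpha_{m+r+1}^c\times\alpha_{n+s+1}^d),
\]
and this is literally a trivial extension of $f\cdot g$, so $f\cdot g\approx f\cdot\overline g$ at once. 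Assembling (a), (b), (b$'$) as in the reduction gives (i) and (ii), hence the proposition.

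\emph{Main obstacle.} The only genuine work is verifying the two displayed identities: this is conceptually routine but requires a careful case analysis over the (disjoint) regions cut out in the definition of $f\cdot g$, checking that the reindexing by the maps $\alpha_i^k$ carries each region to the corresponding region with matching values. Everything else is formal, using only \thmref{thm: extn-cont equiv}, \lemref{localized contiguity equiv}, and \corref{cor: different doublings}.
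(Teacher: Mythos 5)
Your proposal is correct and follows essentially the same route as the paper's proof: the crux in both is the identity expressing a product involving trivial extensions as a composition $(f\cdot g)\circ(\alpha^a_m\times\alpha^b_n)$, converted to a genuine trivial extension of $f\cdot g$ via \corref{cor: different doublings}, together with patching the contiguity equivalences of the factors into the product via \lemref{localized contiguity equiv}. The only difference is organizational --- you split the argument into two one-sided steps, while the paper handles both trivial extensions in a single composition --- and your two displayed identities do check out.
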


\begin{proof} Let $f_1,f_2,g_1$, and $g_2$ satisfy the above. Then there are trivial extensions $\bar{f_1}, \bar{f_2}\colon I_{\bar{m}} \times I_{\bar{n}} \to X$ of $f_1$ and $f_2$, respectively, that are contiguity equivalent relative the boundary.  Similarly, there are trivial extensions $\bar{g_1}, \bar{g_2}\colon I_{\bar{r}} \times I_{\bar{s}} \to X$ of $g_1$ and $g_2$, respectively, that are contiguity equivalent relative the boundary. 

Notice that $\overline{f_1} \cdot \overline{g_1}$ may be written as a composition
$$\overline{f_1} \cdot \overline{g_1} = 
(f_1 \cdot g_1) \circ (\alpha_{m_1}^{\overline{m} - m_1} \times \alpha_{n_1}^{\overline{n} - n_1})
\circ (\alpha_{\overline{m}+r_1+1}^{\overline{r} - r_1} \times \alpha_{\overline{n}+s_1+1}^{\overline{s} - s_1}).$$
For the first map applied in this composition,
part (b) of \propref{prop: contiguity results2} gives a contiguity equivalence
$$(\alpha_{\overline{m}+r_1+1}^{\overline{r} - r_1} \times \alpha_{\overline{n}+s_1+1}^{\overline{s} - s_1})
\simeq (\alpha_{m_1+r_1+1}^{\overline{r} - r_1} \times \alpha_{n_1+s_1+1}^{\overline{s} - s_1})
\colon I_{\overline{m} + \overline{r}+1} \times I_{\overline{n} + \overline{s}+1} \to I_{\overline{m} + r_1+1} \times I_{\overline{n} + s_1+1}$$
For the middle map, we have
$$(\alpha_{m_1}^{\overline{m} - m_1} \times \alpha_{n_1}^{\overline{n} - n_1})
\simeq (\alpha_{m_1+r_1+1}^{\overline{m} - m_1} \times \alpha_{n_1+s_1+1}^{\overline{n} - n_1})
\colon I_{\overline{m} + r_1+1} \times I_{\overline{n} + s_1+1} \to I_{m_1 + r_1+1} \times I_{n_1 + s_1+1}.$$
Hence, part (a) of \propref{prop: contiguity results1} obtains a contiguity equivalence 
$$\begin{aligned}
\overline{f_1} \cdot \overline{g_1} & \simeq
(f_1 \cdot g_1) \circ (\alpha_{m_1+r_1+1}^{\overline{r} - r_1} \times \alpha_{n_1+s_1+1}^{\overline{s} - s_1})
\circ (\alpha_{m_1+r_1+1}^{\overline{m} - m_1} \times \alpha_{n_1+s_1+1}^{\overline{n} - n_1})\\
&\simeq (f_1 \cdot g_1)\circ (\alpha_{m_1+r_1+1}^{\overline{m} +\overline{r} - m_1 - r_1} \times \alpha_{n_1+s_1+1}^{\overline{n} + \overline{s}  - n_1 -s_1}),
\end{aligned}$$
which is a trivial extension $\overline{(f_1 \cdot g_1)}$ of $f_1 \cdot g_1$.  Thus we have an extension-contiguity equivalence $f_1 \cdot g_1 \approx \overline{f_1} \cdot \overline{g_1}$.  Likewise, we obtain an extension-contiguity equivalence $\overline{f_2} \cdot \overline{g_2}  \approx  f_2 \cdot g_2$.

Now the contiguity equivalence $\overline{f_1} \simeq \overline{f_2} \colon I_{\bar{m}} \times I_{\bar{n}} \to X$ may be patched into a contiguity equivalence $$\overline{f_1}\cdot \overline{g_1} \simeq \overline{f_2}\cdot \overline{g_1}\colon I_{\bar{m} + \bar{r} + 1} \times I_{\bar{n}+ \bar{s}+1} \to X$$ as in Lemma~\ref{localized contiguity equiv}.  Similarly, the contiguity equivalence 
$\overline{g_1} \simeq \overline{g_2}$ gives a contiguity equivalence 
$$\overline{g_1}\circ T_{\overline{m}+1, \overline{n}+1} \simeq \overline{g_2}\circ T_{\overline{m}+1, \overline{n}+1} \colon [\bar{m} +1, \bar{m}+\bar{r} + 1] \times [\bar{n} +1, \bar{n}+\bar{s} + 1] \to X,$$
by part (a) of \propref{prop: contiguity results1}. Then we may patch this contiguity equivalence into one
$$\overline{f_2}\cdot \overline{g_1} \simeq \overline{f_2}\cdot \overline{g_2} \colon I_{\bar{m} + \bar{r} + 1} \times I_{\bar{n}+ \bar{s}+1} \to X$$ as in Lemma~\ref{localized contiguity equiv}.
Assembling these parts, we have an extension-contiguity equivalence
$$f_1\cdot g_1 \approx \overline{f_1} \cdot \overline{g_1} \simeq \overline{f_2}\cdot \overline{g_1} \simeq \overline{f_2}\cdot \overline{g_2} \approx f_2\cdot g_2.$$
\end{proof}

By Proposition~\ref{prop: operation well-defined}, we have a well-defined binary operation in $F(X,x_0)$ defined by setting  $[f]\cdot[g] := [f\cdot g]$ for $[f], [g]\in F(X,x_0)$.

\begin{theorem}\label{thm: group} With the binary operation given above, the set of equivalence classes $F(X,x_0)$ is a group.
\end{theorem}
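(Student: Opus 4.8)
The plan is to verify the three group axioms for the operation $[f]\cdot[g]=[f\cdot g]$, where the identity element will be the class of the constant face sphere $c\colon (I_0\times I_0,\partial(I_0\times I_0))\to(X,x_0)$, $c(0,0)=x_0$ (equivalently, the class of any face sphere all of whose vertices are labeled $x_0$). Throughout, the key technical tool will be \lemref{localized contiguity equiv} (patching in local contiguity equivalences) together with \corref{cor: different doublings} and the basic facts about compositions of the maps $\alpha_i$ gathered in Propositions~\ref{prop: contiguity results1} and \ref{prop: contiguity results2}. The general strategy for each axiom is the same: given two of our stacked ``block" face spheres that one wants to identify, pass to a common large rectangle $I_M\times I_N$, observe that each side is the trivial extension of $f\cdot g$ (resp. $(f\cdot g)\cdot h$, etc.) placed inside $I_M\times I_N$ with the various blocks occupying disjoint rectangular sub-regions, and then either (i) move a block by a contiguity equivalence that slides it across a region labeled entirely $x_0$, or (ii) simply reorganize which rows/columns are repeated using \corref{cor: different doublings}.

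For the \textbf{identity axiom}, I would show $[f]\cdot[c]=[f]=[c]\cdot[f]$. Unwinding the definition, $f\cdot c\colon I_{m+1}\times I_{n+1}\to X$ is the face sphere equal to $f$ on $I_m\times I_n$ and equal to $x_0$ on all remaining vertices; but that is precisely the trivial extension $f\circ(\alpha_m^{1}\times\alpha_n^{1})$, so $f\cdot c=\bar f\approx f$ on the nose. The equality $[c]\cdot[f]=[f]$ is essentially the same after translating the single occupied block; one patches in the (trivial) contiguity equivalence that slides the copy of $f$ from the upper-right corner of $I_{m+1}\times I_{n+1}$ to the lower-left corner, using that everything outside either block is $x_0$.

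For the \textbf{inverse axiom}, define $f^{-1}\colon (I_m\times I_n,\partial)\to(X,x_0)$ by $f^{-1}(i,j)=f(m-i,j)$ (reflection in a vertical axis); this is again a simplicial face sphere since $T$-type reflections are simplicial and preserve the boundary. I would then show $[f]\cdot[f^{-1}]=[c]$ by constructing an explicit contiguity equivalence, in array style, from a trivial extension of $f\cdot f^{-1}$ to the constant map: the standard ``there and back" nullhomotopy, realized combinatorially by collapsing the two mirror-image blocks toward the shared seam one column at a time, at each stage applying \lemref{localized contiguity equiv} on the relevant sub-rectangle. This is the step I expect to be the main obstacle, since one must (a) choose the reflection axis and the stacking so that $f$ and $f^{-1}$ meet along a common edge labeled $x_0$, (b) write down the collapsing contiguities and check at each stage that unions of images of $3$-simplices land in simplices of $X$ — which works because adjacent columns in the ``fold" differ by the contiguity pattern already analyzed in \propref{prop: contiguity results2}(a) — and (c) confirm the equivalence is relative the boundary throughout. (Alternatively one can cite the forthcoming \thmref{thm: Face group iso homotopy group}, but the point here is a self-contained combinatorial argument.)

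For \textbf{associativity}, I would compare $(f\cdot g)\cdot h$ and $f\cdot(g\cdot h)$. Both are face spheres on $I_{m+r+p+2}\times I_{n+s+q+2}$ (with the obvious indexing of $h$ on $I_p\times I_q$), and in array style both place the block $f$ in the lower-left corner, the block $h$ in the upper-right corner, and the block $g$ somewhere along the diagonal, with everything else $x_0$; the only difference between the two is the precise row- and column-indices at which the $g$-block sits. Passing to a common rectangle and using \corref{cor: different doublings} to shift which rows and columns are repeated — equivalently, sliding the $g$-block through the $x_0$-region via \lemref{localized contiguity equiv} — produces a contiguity equivalence between trivial extensions of the two triple products, giving $((f\cdot g)\cdot h)\approx(f\cdot(g\cdot h))$. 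Once these three axioms are in place, and given that the operation is well-defined by \propref{prop: operation well-defined}, the set $F(X,x_0)$ is a group.
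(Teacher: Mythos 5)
Your proposal follows essentially the same route as the paper: the same single-point constant map as identity (with $f\cdot c$ literally equal to a trivial extension and $c\cdot f$ handled via \corref{cor: different doublings}), the same reflected inverse $\widetilde{f}(i,j)=f(m-i,j)$, and the same column-by-column collapse of the two mirror-image blocks toward the seam, patched in with \lemref{localized contiguity equiv}. One small correction: for associativity no block-sliding is needed, since a direct check of where the three blocks land shows $(f\cdot g)\cdot h$ and $f\cdot(g\cdot h)$ are \emph{equal} as maps on $I_{m+r+p+2}\times I_{n+s+q+2}$ (the $g$-block occupies $[m+1,m+r+1]\times[n+1,n+s+1]$ in both), which is how the paper disposes of that axiom in one line.
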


\begin{proof}
Associativity follows immediately since  $(f\cdot g)\cdot h=f\cdot (g\cdot h)$ at the level of maps.

Next, let $c_{x_0}\colon I_{m} \times I_{n}\to X$ be the constant map at $x_0 \in X$ from any rectangle.  Any such map may be viewed as a trivial extension of the constant map $c_{x_0}\colon I_{0,0}\to X$, where $I_{0,0} = \{ (0, 0) \}$.  We show that  $[c_{x_0}]$ acts as a two-sided identity, where---by the preceding remark---we may as well assume the representative $c_{x_0}$ has domain the single point $(0, 0)$.   Let $f\colon \left( I_{m}\times I_{n}, \partial (I_{m}\times I_{n}) \right) \to (X, x_0)$ be any face sphere. Multiplying by our putative identity on the right, we see that  
 $f\cdot c_{x_0}\colon \left( I_{m+1}\times I_{n+1}, \partial (I_{m+1}\times I_{n+1}) \right) \to (X, x_0)$ is equal to the trivial extension $\bar f\colon \left( I_{m+1}\times I_{n+1}, \partial (I_{m+1}\times I_{n+1}) \right)\to (X, x_0)$. Thus $[f]\cdot[c_{x_0}] = [f\cdot c_{x_0}] = [\bar f] = [f]$ and so $[c_{x_0}]$ acts on the right as an identity element.  On the left, we see that $c_{x_0}\cdot f\colon \left( I_{m+1}\times I_{n+1}, \partial (I_{m+1}\times I_{n+1}) \right) \to (X, x_0)$ may be written as the result of doubling the first row and column of $f$, when $f$ is represented array style.  With \corref{cor: different doublings}, we may write
 $$c_{x_0}\cdot f = f\circ (\alpha_0\times\alpha_0) \simeq 
 f\circ (\alpha_m\times\alpha_n) = \overline{f},$$
 with $\overline{f}$ a trivial extension of $f$.  Thus, we have  $[c_{x_0}]\cdot [f] = [f]$ and $[c_{x_0}]$ acts as a left identity too.

Now we consider inverses.  Suppose  $[f] \in F(X,x_0)$ is represented by a face sphere  $f\colon \left( I_{m}\times I_{n}, \partial (I_{m}\times I_{n}) \right) \to (X, x_0)$.  Consider the equivalence class represented by the map $\widetilde{f}\colon \left( I_{m}\times I_{n}, \partial (I_{m}\times I_{n}) \right) \to (X, x_0)$
defined on vertices  by 
 $$\widetilde{f}(i,j):=f(m-i,j).$$
To check that $\widetilde{f}$ is a simplicial map, it is sufficient to check that $\widetilde{f}$ maps $3$-simplices of $I_{m}\times I_{n}$ to simplices of $X$.  Now each $3$-simplex 
$\sigma = \{ (i, j), (i+1, j), (i, j+1), (i+1, j+1) \}$ of $I_{m}\times I_{n}$  has its counterpart $\sigma' = \{ (m-i, j), (m-i-1, j), (m-i, j+1), (m-i-1, j+1) \}$.  We have $\widetilde{f}(\sigma) = f(\sigma')$, which is a simplex of $X$ since $f$ is a simplicial map.

We now claim that  $f \cdot \widetilde{f} \approx c_{x_0}$.  As a pre-processing step,  we have an extension-contiguity equivalence 
$$f \cdot \widetilde{f} \approx (f \mid \widetilde{f}) \colon \left( I_{2m+1}\times I_{n}, \partial (I_{2m+1}\times I_{n}) \right) \to (X, x_0),$$
where we define $(f \mid \widetilde{f})$ as 
$$
(f \mid \widetilde{f})(i, j)=
    \begin{cases}
        f(i,j) & \text{if } 0\leq i \leq m\\
        \widetilde{f}(i-(m+1),j) & \text{if } m+1 \leq i \leq 2m+1.\\
    \end{cases}
$$
This extension-contiguity equivalence may be indicated pictorially as follows:
\[
f\cdot \widetilde{f} =
\vcenter{\hbox{\begin{tikzpicture}[scale=.4]
\draw[thick] (0,0) rectangle (4,6);
\draw[thick] (0,0) rectangle (2,3) node[pos=.5] {$f$};
\draw[thick] (2,3) rectangle (4,6) node[pos=.5] {$\widetilde{f}$};
\end{tikzpicture}
}}
\simeq
\vcenter{\hbox{\begin{tikzpicture}[scale=.4]
\draw[thick] (0,0) rectangle (4,6);
\draw[thick] (2, 0)--(2, 6);
\draw[thick] (0,0) rectangle (2,3) node[pos=.5] {$f$};
\draw[thick] (2,0) rectangle (4,3) node[pos=.5] {$\widetilde{f}$};
\end{tikzpicture}
}}
\approx
\vcenter{\hbox{\begin{tikzpicture}[scale=.4]
\draw[thick] (0,0) rectangle (2,3) node[pos=.5] {$f$};
\draw[thick] (2,0) rectangle (4,3) node[pos=.5] {$\widetilde{f}$};
\end{tikzpicture}
}}
= (f\mid \widetilde{f})
\]
In terms of symbols, we have a contiguity equivalence 
$$\widetilde{f} \circ (\mathrm{id} \times \alpha_0^{n+1})\circ T_{m+1, 0} \colon [m+1, 2m+1]\times I_{2n+1} \to X$$
from \corref{cor: different doublings} and part (a) of \propref{prop: contiguity results1}.  This may be patched into a contiguity equivalence
$$f \cdot \widetilde{f} \simeq (f \mid \widetilde{f}) \circ (\mathrm{id} \times \alpha_n^{n+1})\colon I_{2m+1}\times I_{2n+1} \to X$$
by \lemref{localized contiguity equiv}. Since $(f \mid \widetilde{f}) \circ (\mathrm{id} \times \alpha_n^{n+1})$ is a trivial extension of $(f \mid \widetilde{f})$, we obtain the extension-contiguity equivalence
$$f \cdot \widetilde{f} \simeq (f \mid \widetilde{f}) \circ (\mathrm{id} \times \alpha_n^{n+1}) \approx (f \mid \widetilde{f})$$
pictured above. 
 
Now display the values of $(f\mid \widetilde{f})$ array style on the vertices of $I_{2m+1} \times I_{n}$ in column-wise form as 
$$(f\mid \widetilde{f}) = [\mathbf{x}_0\mid \mathbf{v}_1\mid\mathbf{v}_2\mid \cdots\mid \mathbf{v}_{m-1}\mid \mathbf{x}_0\mid \mathbf{x}_0\mid \mathbf{v}_{m-1}\mid \cdots\mid \mathbf{v}_2\mid \mathbf{v}_1\mid \mathbf{x}_0],$$
where each $\mathbf{v}_i$ is a column vector of $n+1$ entries from $X$ given by
$$\mathbf{v}_i = \left[ \begin{array}{c} f(i, n) = x_0\\  f(i, n-1)\\ \vdots \\ f(i, 2)\\ f(i, 1)\\ f(i, 0) = x_0 \end{array}\right].$$
and $\mathbf{x}_0$ denotes the column vector of the same length each of whose entries is $x_0$.
Since the middle two columns are repeats (of $\mathbf{x}_0$), we may write $(f\mid \widetilde{f}) = g_{m}\circ (\alpha_{m}\times \mathrm{id})$,  where the map 
$g_{m}\colon \left( I_{2m}\times I_{n}, \partial (I_{2m}\times I_{n}) \right) \to (X, x_0)$ may be written in similar column-wise form as 
$$g_{m} = [\mathbf{x}_0\mid \mathbf{v}_1\mid\mathbf{v}_2\mid \cdots\mid \mathbf{v}_{m-1}\mid \mathbf{x}_0 \mid \mathbf{v}_{m-1}\mid \cdots\mid \mathbf{v}_2\mid \mathbf{v}_1\mid \mathbf{x}_0].$$
Namely, we have ``collapsed" the repeated $\mathbf{x}_0$ columns in the center into a single column.  From \corref{cor: different doublings}, we now have an extension-contiguity equivalence
$$(f\mid \widetilde{f}) = g_{m}\circ (\alpha_{m}\times \mathrm{id}) \simeq g_{m}\circ (\alpha_{2m}\times \mathrm{id}) \approx g_{m},$$  
as $g_{m}\circ (\alpha_{2m}\times \mathrm{id})$ is a trivial extension of $g_{m}$.

In similar style, we may  define a map
$g_r \colon \left( I_{2r} \times I_{n}, \partial (I_{2r} \times I_{n})\right) \to (X, x_0)$ for each $r = 1, \ldots, m$  by
$$g_r(i, j) = \begin{cases} f(i, j) & i =0, \ldots, r\\ f(2r-i, j) & i= r+1, \ldots, 2r.\end{cases}$$
The map $g_m$ we arrived at above is the case in which $r = m$, and the general $g_r$ may be pictured in array style and column-wise as a reduced form of $g_m$, with 
$$g_r = [\mathbf{x}_0\mid \mathbf{v}_1\mid\cdots\mid \mathbf{v}_{r-1}\mid \mathbf{v}_r\mid \mathbf{v}_{r-1}\mid \cdots\mid \mathbf{v}_1\mid \mathbf{x}_0].$$
We note in this case that:
\[ g_{r-1}\circ (\alpha_{r-1}^2 \times \mathrm{id}) =  [\mathbf{x}_0\mid \mathbf{v}_1\mid\cdots\mid \mathbf{v}_{r-1}\mid \mathbf{v}_{r-1}\mid \mathbf{v}_{r-1}\mid \cdots\mid \mathbf{v}_1\mid \mathbf{x}_0]. \]

\emph{Claim}. For each $r \in \{m, m-1, \ldots, 1 \}$, we have $g_r \sim g_{r-1}\circ (\alpha_{r-1}^2 \times \mathrm{id}) \approx g_{r-1}$.

\emph{Proof of Claim}. \corref{cor: different doublings} gives an extension-contiguity equivalence $g_{r-1}\circ (\alpha_{r-1}^2 \times \mathrm{id}) \simeq  g_{r-1}\circ (\alpha_{2r-2}^2 \times \mathrm{id}) \approx g_{r-1}$, so we need only show a contiguity   $g_r \sim g_{r-1}\circ (\alpha_{r-1}^2 \times \mathrm{id})$.

Since $g_r$ and $g_{r-1}\circ (\alpha_{r-1}^2 \times \mathrm{id})$ differ only on those vertices of $I_{2r} \times I_{n}$ in column $r$, we need only check the contiguity condition for simplices with a vertex in column $r$.  The 
typical pair of such simplices, and the values the maps take on their vertices, are illustrated in Figures~\ref{fig: g_r vs. g'_r part 1} and \ref{fig: g_r vs. g'_r part 2}.
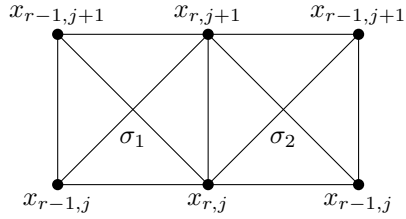
\begin{figure}[h!]
\[
\begin{tikzpicture}[scale=2]
%Vertices
\foreach \x in {0,1,2} {
 \foreach \y in {0,1} {
   \node[inner sep=1.5pt, circle, fill=black]  at (\x,\y) {};
  }
 }
%Edges
\foreach \x in {0, 1} {
 \foreach \y in {0, 1} {
      \draw (\x,\y) -- (\x+1,\y);
 }
}
\foreach \x in {0, 1, 2} {
      \draw (\x,0) -- (\x,1);
}
\foreach \x in {0,1} {
      \draw (\x,0) -- (\x+1,1);
}
\foreach \x in {0,1} {
      \draw (\x,1) -- (\x+1,0);
}
%\abels
\node[below]  at (0,0) {$x_{r-1, j}$};
\node[below]  at (1,0) {$x_{r, j}$};
\node[below]  at (2,0) {$x_{r-1, j}$};
\node[above]  at (0,1) {$x_{r-1, j+1}$};
\node[above]  at (1,1) {$x_{r, j+1}$};
\node[above]  at (2,1) {$x_{r-1, j+1}$};
%simplices
\node at (0.5,0.3) {$\sigma_1$};
\node at (1.5,0.3) {$\sigma_2$};
\end{tikzpicture}
\]
\caption{Values of $g_{r}$ on the vertices of $[r-1, r+1]\times [j, j+1]$, with $x_{i, j}$ denoting $f(i, j)$. }\label{fig: g_r vs. g'_r part 1}
\end{figure}
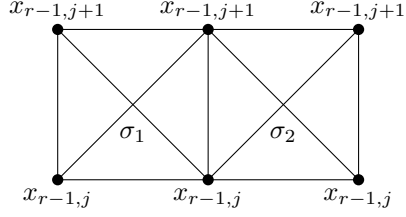
\begin{figure}[h!]
\[
\begin{tikzpicture}[scale=2]
%Vertices
\foreach \x in {0,1,2} {
 \foreach \y in {0,1} {
   \node[inner sep=1.5pt, circle, fill=black]  at (\x,\y) {};
  }
 }
%Edges
\foreach \x in {0, 1} {
 \foreach \y in {0, 1} {
      \draw (\x,\y) -- (\x+1,\y);
 }
}
\foreach \x in {0, 1, 2} {
      \draw (\x,0) -- (\x,1);
}
\foreach \x in {0,1} {
      \draw (\x,0) -- (\x+1,1);
}
\foreach \x in {0,1} {
      \draw (\x,1) -- (\x+1,0);
}
%\abels
\node[below]  at (0,0) {$x_{r-1, j}$};
\node[below]  at (1,0) {$x_{r-1, j}$};
\node[below]  at (2,0) {$x_{r-1, j}$};
\node[above]  at (0,1) {$x_{r-1, j+1}$};
\node[above]  at (1,1) {$x_{r-1, j+1}$};
\node[above]  at (2,1) {$x_{r-1, j+1}$};
%simplices
\node at (0.5,0.3) {$\sigma_1$};
\node at (1.5,0.3) {$\sigma_2$};
\end{tikzpicture}
\]
\caption{Values of $g_{r-1}\circ (\alpha_{r-1}^2 \times \mathrm{id})$ on the vertices of $[r-1, r+1]\times [j, j+1]$, with $x_{i, j}$ denoting $f(i, j)$. }\label{fig: g_r vs. g'_r part 2}
\end{figure}
We check the contiguity property directly.  On the $3$-simplex of $I_{2r} \times I_n$ on the left in both Figures, namely $\sigma_1 = \{ (r-1, j), (r, j), (r-1, j+1), (r, j+1) \}$,  we have 
$$g_{r}(\sigma_1) \cup g_{r-1}\circ (\alpha_{r-1}^2 \times \mathrm{id})(\sigma_1)  = \{ x_{r-1, j}, x_{r-1, j+1}, x_{r, j}, x_{r, j+1} \}  = f(\sigma_1),$$
which is a simplex of $X$ as $f$ is a simplicial map.  (As in the Figures, here we have used $x_{i, j}$ to denote $f(i, j)$.  When we write $f(\sigma_1)$, we mean $f$ in its original version as applied to $I_m \times I_n$.) Similarly, we find that  
$$g_{r}(\sigma_2) \cup g_{r-1}\circ (\alpha_{r-1}^2 \times \mathrm{id})(\sigma_2)  = f(\sigma_1).$$
It follows that $g_{r} \sim g_{r-1}\circ (\alpha_{r-1}^2 \times \mathrm{id})$.
\emph{End of Proof of Claim}.

Finally, an easy induction using the previous claim, picking up from where we left off before the claim, gives a sequence of extension-contiguity equivalences
$$f\cdot \widetilde{f} \approx(f\mid \widetilde{f}) \approx g_{m} \approx \cdots \approx g_{0},$$
where this last map is a constant map.  This completes the proof that $[\widetilde{f}]$ is a right inverse for $[f] \in F(X, x_0)$.  As the operation is associative, this is sufficient to show that $[\widetilde{f}]$ is a two-sided inverse, and thus that $F(X, x_0)$ is a group.
\end{proof}

Later in the paper (\thmref{thm: Face group iso homotopy group}) we will show that $F(X, x_0)$ is isomorphic to $\pi_2(|X|)$, the second homotopy group of the spatial realization of $X$.  Since the second homotopy group is an abelian group, it will follow that our $F(X, x_0)$ is an abelian group.  But we may show this directly here with a simple combinatorial argument.

\begin{theorem}\label{thm: Face group abelian}
Let $(X, x_0)$ be any based simplicial complex. The face group $F(X, x_0)$ is an abelian group. 
\end{theorem}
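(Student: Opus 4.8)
The plan is to mimic the classical Eckmann--Hilton argument in our combinatorial setting. The key observation is that the product $f \cdot g$ of two face spheres $f \colon I_m \times I_n \to X$ and $g \colon I_r \times I_s \to X$ places $f$ in a lower-left block and (a translate of) $g$ in an upper-right block, with everything else sent to $x_0$. Because the two blocks occupy disjoint rows \emph{and} disjoint columns, there is enough room to slide the two blocks around past one another within a sufficiently large rectangle, all the while keeping the maps equal to $x_0$ outside the two blocks. Concretely, I would first show that for face spheres $f$ and $g$ on rectangles $I_m \times I_n$ and $I_r \times I_s$ respectively, both $f \cdot g$ and $g \cdot f$ are extension-contiguity equivalent to a ``diagonal placement'' map on a common rectangle $I_{m+r+1} \times I_{n+s+1}$ in which the $f$-block sits in rows $[0,n]$ and columns $[0,m]$ while the $g$-block sits in rows $[n+1, n+s+1]$ and columns $[m+1, m+r+1]$ --- that is, $f \cdot g$ itself --- as well as to the ``anti-diagonal'' map with the $f$-block in rows $[0,n]$, columns $[r+1, m+r+1]$ and the $g$-block in rows $[s+1, n+s+1]$, columns $[0, r]$.

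The heart of the argument is then to show that these diagonal and anti-diagonal placements are extension-contiguity equivalent to each other. I would do this by moving the $f$-block from the lower-left corner to the lower-right corner one step at a time, using \lemref{localized contiguity equiv} to ``patch in'' a local contiguity on a small rectangular sub-region straddling column $k$ and column $k+1$: since all the relevant vertices on the moving frontier are labeled $x_0$ (the block being pushed has its own boundary collar of $x_0$'s, and the region it is vacating is filled with $x_0$), shifting the block one column to the right is a contiguity --- indeed the union of values on any $3$-simplex in the affected strip is contained in $\{x_0\} \cup (\text{a simplex of } X)$, hence in a simplex of $X$ since $x_0$ together with any simplex of the star of $x_0$ \dots wait, more carefully: on $3$-simplices lying entirely inside the block or entirely outside it nothing changes, and on $3$-simplices in the transition strip the combined image is exactly the image under $f$ of a single $2$- or $3$-simplex together with $x_0$, and by the face-sphere condition the boundary collar of $f$ is already at $x_0$, so this is a simplex of $X$. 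Doing this $r+1$ times slides $f$ into columns $[r+1, m+r+1]$; similarly one slides $g$ down into rows $[0,r]$... here I should track indices so that after both slides we land exactly on the anti-diagonal map, possibly after a trivial extension or an application of \corref{cor: different doublings} to reconcile which row/column gets repeated.

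Finally, to close the loop I would observe that the anti-diagonal map is \emph{symmetric} in the roles of $f$ and $g$: performing the analogous slides starting from $g \cdot f$ (moving the $g$-block and the $f$-block) lands on the very same anti-diagonal configuration. Stringing the equivalences together,
$$[f]\cdot[g] = [f\cdot g] = [\text{anti-diagonal}] = [g\cdot f] = [g]\cdot[f],$$
which is the desired commutativity. I expect the main obstacle to be bookkeeping: getting the rectangle sizes, the translation offsets $T_{p,q}$, and the repeated-row/column indices to match up exactly at each stage, and arranging the ``slide by one column'' moves so that \lemref{localized contiguity equiv} applies cleanly (i.e.\ the sub-complex $R$ being modified is genuinely rectangular and the two maps agree on $\partial R$, which forces $R$ to contain a one-vertex-thick collar of $x_0$'s on the sides where the block is entering and leaving). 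The underlying topology --- that two disjointly-supported ``bumps'' on a sphere can be commuted --- is exactly Eckmann--Hilton, so the only real work is making each slide a literal contiguity and then concatenating.
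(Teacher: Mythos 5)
Your overall strategy is the same as the paper's: the two blocks of $f\cdot g$ have disjoint row-support and disjoint column-support, so one slides them past each other inside the big rectangle, exactly in the Eckmann--Hilton spirit; the paper's proof passes through four intermediate configurations ($g$ moved down, $f$ moved up, $f$ moved right, $g$ moved left) to get from $f\cdot g$ to $g\cdot f$. The gap is in your mechanism for a single slide. Rigidly shifting a block one column to the right is \emph{not} a contiguity, and your claim that ``on $3$-simplices lying entirely inside the block\dots nothing changes'' is false: after the shift \emph{every} interior vertex of the block changes value, from $f(i,j)$ to $f(i-1,j)$, so on an interior $3$-simplex $\sigma = \{(i,j),(i+1,j),(i,j+1),(i+1,j+1)\}$ the union of old and new values is $f\bigl([i-1,i+1]\times[j,j+1]\bigr)$, the image of \emph{two} adjacent squares of the domain of $f$, which need not lie in a simplex of $X$. (For the generator in Figure~\ref{fig: Generator F(X)} such a union contains the antipodal, non-adjacent pair $\{\mathbf{e}_3,-\mathbf{e}_3\}$.) For the same reason \lemref{localized contiguity equiv} cannot be applied to a small sub-rectangle straddling columns $k$ and $k+1$ in the middle of the block: the old and new maps do not agree on the boundary of such an $R$, and the change is not confined to it.

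The repair is what the paper actually does: work with an entire row-strip or column-strip at a time. On the column strip $[m+1,m+r+1]\times I_{n+s+1}$, the map $f\cdot g$ restricts to $g$ with its bottom row (all $x_0$) repeated $n+1$ times, while the target configuration, with $g$ at the bottom of the strip, is $g$ with its top row repeated the same number of times; \corref{cor: different doublings} makes these contiguity equivalent, both send the boundary of the strip to $x_0$, and \lemref{localized contiguity equiv} patches the equivalence into the full rectangle. The elementary contiguities underlying \corref{cor: different doublings} (namely $\alpha_i\sim\alpha_{i+1}$ from \propref{prop: contiguity results2}) replace a duplicated row by the genuine adjacent row one step at a time, rather than translating the block rigidly; that is what keeps each union of values inside the image of a single simplex of the domain of $g$. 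With the slides realized this way, your chain through the diagonal and anti-diagonal configurations goes through and coincides essentially with the paper's argument.
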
    

\begin{proof}
Suppose face spheres $f\colon \left(I_{m} \times I_{n}, \partial (I_{m} \times I_{n}) \right) \to (X,x_0)$ and $g\colon \left(I_{r} \times I_{s}, \partial (I_{r} \times I_{s}) \right) \to (X,x_0)$ represent elements $[f]$ and $[g]$ of $F(X, x_0)$.  We show a contiguity equivalence $f\cdot g \simeq g\cdot f \colon \left(I_{m+r+1} \times I_{n+s+1}, \partial (I_{m+r+1} \times I_{n+s+1}) \right) \to (X,x_0)$.
This contiguity equivalence may be indicated pictorially as a sequence of contiguity equivalences follows:
\[
f\cdot g =
\vcenter{\hbox{\begin{tikzpicture}[scale=.2]
\draw[thick] (0,0) rectangle (7,7);
\draw[thick] (0,0) rectangle (2,3) node[pos=.5] {$f$};
\draw[thick] (2,3) rectangle (7,7) node[pos=.5] {$g$};
\end{tikzpicture}
}}
\simeq
\vcenter{\hbox{\begin{tikzpicture}[scale=.2]
\draw[thick] (0,0) rectangle (7,7);
\draw[thick] (2, 4)--(2, 7);
\draw[thick] (0,0) rectangle (2,3) node[pos=.5] {$f$};
\draw[thick] (2,0) rectangle (7,4) node[pos=.5] {$g$};
\end{tikzpicture}
}}
\simeq
\vcenter{\hbox{\begin{tikzpicture}[scale=.2]
\draw[thick] (0,0) rectangle (7,7);
\draw[thick] (0,4) rectangle (2,7) node[pos=.5] {$f$};
\draw[thick] (2,0) rectangle (7,4) node[pos=.5] {$g$};
\end{tikzpicture}
}}
\simeq
\vcenter{\hbox{\begin{tikzpicture}[scale=.2]
\draw[thick] (0,0) rectangle (7,7);
\draw[thick] (0, 4)--(2, 4);
\draw[thick] (5,4) rectangle (7,7) node[pos=.5] {$f$};
\draw[thick] (2,0) rectangle (7,4) node[pos=.5] {$g$};
\end{tikzpicture}
}}
\simeq
\vcenter{\hbox{\begin{tikzpicture}[scale=.2]
\draw[thick] (0,0) rectangle (7,7);
\draw[thick] (5,4) rectangle (7,7) node[pos=.5] {$f$};
\draw[thick] (0,0) rectangle (5,4) node[pos=.5] {$g$};
\end{tikzpicture}
}}
= g \cdot f
\]
In symbolic terms, the first of these contiguity equivalences may be given as follows.  We have a contiguity equivalence 
$$g \circ (\mathrm{id} \times \alpha_0^{n+1})\circ T_{m+1, 0} \simeq g \circ (\mathrm{id} \times \alpha_n^{n+1})\circ T_{m+1, 0}\colon [m+1, m+r+1]\times I_{n+s+1} \to X$$
from \corref{cor: different doublings} and part (a) of \propref{prop: contiguity results1}.  This may be patched into a contiguity equivalence
$$f \cdot g \simeq h \colon I_{m+r+1}\times I_{n+s+1} \to X$$
by \lemref{localized contiguity equiv}, where $h$ is the map given on vertices by 
$$
h(i,j)=
    \begin{cases}
        f(i,j) & \text{if } (i,j)\in I_m\times I_n\\
        g\circ T_{m+1, 0}(i, j) & \text{if } (i,j)\in [m+1,m+r+1]\times I_s\\
        x_0 & \text{otherwise}
    \end{cases}
$$
and pictured in the figure second from left. The remaining contiguity equivalences follow similarly, working locally in $I_m \times I_{n+s+1}$ for the next, then working locally in $I_{m+r+1} \times [s+1, s+n+1]$ for the third, then in $I_{m+n+1}\times I_s$ for the fourth, final one.  We omit the details.  
\end{proof}

The face group is well-behaved functorially, as we now show. Suppose we have a simplicial map $\phi\colon X \to Y$ and $\phi(x_0) = y_0$.  Then, if $f\colon \left(I_{m} \times I_{n}, \partial (I_{m} \times I_{n}) \right) \to (X,x_0)$ is a face sphere in $(X, x_0)$, the composition $\phi\circ f \colon \left(I_{m} \times I_{n}, \partial (I_{m} \times I_{n}) \right) \to (Y,y_0)$ is a face sphere in $(Y, y_0)$.  Furthermore, it follows directly from the definitions that we have agreement of vertex maps
$$\phi\circ (f\cdot g) = (\phi\circ f)\cdot (\phi\circ g) \colon \left(I_{m} \times I_{n}, \partial (I_{m} \times I_{n}) \right) \to (Y,y_0),$$
where the ``$\cdot$" on the left of the equals sign denotes operation on face spheres in $(X, x_0)$ and that on the right of the equals sign denotes operation on face spheres in $(Y, y_0)$. 
\begin{definition}
Given a simplicial map $\phi\colon X \to Y$ with $\phi(x_0) = y_0$.  We define a map
$$\phi_\# \colon F(X, x_0) \to F(Y, y_0)$$
by setting $\phi_\#( [f]) := [\phi\circ f]$.  By the preceding comments, this defines a homomorphism of face groups.  We call this homomorphism the \emph{induced homomorphism of face groups} (induced by $\phi$).
\end{definition}

The functorial nature of the face group is included in the next result.

\begin{proposition}
With the above notation and definitions, we have:
\begin{itemize}
\item[(a)] If $\mathrm{id} \colon X \to X$ denotes the identity map of $X$, then $\mathrm{id}_\# \colon F(X, x_0) \to F(X, x_0)$ is the identity homomorphism.   
\item[(b)] Suppose we have based simplicial maps $\phi \colon (X, x_0) \to (Y, y_0)$ and $\psi \colon (Y, y_0) \to (Z, z_0)$.  Then $(\psi\circ \phi)_\# =  (\psi)_\# \circ (\phi)_\#$. 
\item[(c)] If we have contiguity equivalent maps $\phi \simeq \psi \colon \colon (X, x_0) \to (Y, y_0)$, then $\phi_\# = \psi_\# \colon F(X, x_0) \to F(Y, y_0)$   
\end{itemize}
\end{proposition}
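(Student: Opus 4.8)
The plan is to settle (a) and (b) by a direct appeal to the definition of $\phi_\#$ at the level of maps, and to reduce (c) to one application of \propref{prop: contiguity results1}(a). For (a): given any face sphere $f \colon \left(I_m \times I_n, \partial (I_m \times I_n)\right) \to (X, x_0)$ we have $\mathrm{id}\circ f = f$ as simplicial maps, so $\mathrm{id}_\#([f]) = [f]$, whence $\mathrm{id}_\#$ is the identity homomorphism of $F(X, x_0)$. For (b): composition of functions is associative, so $(\psi\circ\phi)\circ f = \psi\circ(\phi\circ f)$ as simplicial maps, and therefore $(\psi\circ\phi)_\#([f]) = [\psi\circ(\phi\circ f)] = \psi_\#([\phi\circ f]) = \psi_\#(\phi_\#([f]))$; since this holds for every class, $(\psi\circ\phi)_\# = \psi_\#\circ\phi_\#$. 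Here we use only that $\phi_\#$ and $\psi_\#$ are already known to be well-defined homomorphisms from the preceding definition.

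For (c) I would first reduce the statement to showing $[\phi\circ f] = [\psi\circ f]$ in $F(Y, y_0)$ for each face sphere $f$, and then reduce further to producing a contiguity equivalence $\phi\circ f \simeq \psi\circ f$ relative $\partial(I_m\times I_n)$ — such a contiguity equivalence is in particular an extension-contiguity equivalence, taking trivial extensions of length zero exactly as already noted in the proof of \thmref{thm: extn-cont equiv}. To produce it, I would apply \propref{prop: contiguity results1}(a) with the first map taken to be $f$ (trivially contiguity equivalent to itself) and the second map run along the chain $\phi \sim \phi_1 \sim \cdots \sim \psi$ witnessing $\phi \simeq \psi$; this outputs a chain $\phi\circ f \sim \phi_1\circ f \sim \cdots \sim \psi\circ f$. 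Two small verifications complete the argument: each link $\phi_i\circ f \sim \phi_{i+1}\circ f$ is immediate because $f(\sigma)$ is a simplex of $X$ for every simplex $\sigma$ of $I_m\times I_n$ and $\phi_i(f(\sigma))\cup\phi_{i+1}(f(\sigma))$ is contained in a simplex of $Y$ since $\phi_i\sim\phi_{i+1}$; and the chain is relative the boundary because $f$ carries $\partial(I_m\times I_n)$ to $x_0$ and each map in the based contiguity equivalence $\phi\simeq\psi$ carries $x_0$ to $y_0$, so every $\phi_i\circ f$ is a face sphere in $(Y, y_0)$.

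The argument is purely formal, and I expect the only point requiring care to be the reading of ``$\phi \simeq \psi \colon (X, x_0) \to (Y, y_0)$'' as a \emph{based} contiguity equivalence — one in which every intermediate simplicial map also sends $x_0$ to $y_0$. This is exactly what keeps the induced chain on $I_m\times I_n$ relative the boundary; it is a matter of convention rather than of substance, so there is no genuine obstacle here.
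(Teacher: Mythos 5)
Your proposal is correct and follows the same route as the paper, which simply notes that (a) and (b) are immediate from the definitions and that (c) follows from Proposition~\ref{prop: contiguity results1}(a); you have merely spelled out the details. Your observation that the contiguity equivalence $\phi \simeq \psi$ must be read as a based one (so that each intermediate composite $\phi_i \circ f$ remains a face sphere and the chain stays relative the boundary) is a legitimate point of care that matches the paper's conventions for contiguity equivalences of maps of pairs.
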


\begin{proof}
Parts (a) and (b) follow immediately from the definitions.  Part (c)  follows from part (a) of  \propref{prop: contiguity results1}. 
\end{proof}

Recall from \secref{sec: simplicial basics} that $X \times Y$ denotes the categorical product of simplicial complexes $X$ and $Y$.  
 
\begin{theorem}\label{thm: face group product}
Given based simplicial complexes $(X, x_0)$ and $(Y, y_0)$, we have an isomorphism of face groups
$$F\left(X \times Y, (x_0, y_0)\right)   \cong F(X, x_0) \times F(Y, y_0).$$
The product on the right denotes direct sum of abelian groups.  
\end{theorem}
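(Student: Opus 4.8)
The plan is to exhibit the obvious candidate map and construct an explicit two-sided inverse for it, using the categorical nature of the product $X \times Y$ together with the fact that trivial extensions are performed coordinatewise. Let $p_1\colon X\times Y \to X$ and $p_2\colon X\times Y \to Y$ be the (based, simplicial) projections. By functoriality of the face group we obtain a homomorphism
\[
\Phi := \bigl( (p_1)_\#,\ (p_2)_\# \bigr)\colon F\bigl(X\times Y, (x_0,y_0)\bigr) \longrightarrow F(X,x_0)\times F(Y,y_0),\qquad \Phi([h]) = \bigl([p_1\circ h],\,[p_2\circ h]\bigr),
\]
which is a homomorphism into the direct product since each $(p_i)_\#$ is. (The target is a direct sum of abelian groups by \thmref{thm: Face group abelian}, but we need nothing special from this.) The whole proof reduces to showing $\Phi$ is a bijection.

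To that end I would define $\Psi\colon F(X,x_0)\times F(Y,y_0) \to F\bigl(X\times Y,(x_0,y_0)\bigr)$ as follows: given face spheres $f$ with domain $I_m\times I_n$ and $g$ with domain $I_{m'}\times I_{n'}$, pass to trivial extensions $\overline f,\overline g$ over a common rectangle $I_{\overline m}\times I_{\overline n}$ (any $\overline m\ge\max(m,m')$, $\overline n\ge\max(n,n')$), and set $\Psi([f],[g]) := \bigl[(\overline f,\overline g)\bigr]$, where $(\overline f,\overline g)\colon I_{\overline m}\times I_{\overline n}\to X\times Y$ is the coordinate map supplied by the product. Well-definedness rests on the identity $(f,g)\circ\beta = (f\circ\beta,\,g\circ\beta)$ for any simplicial map $\beta$ into the common domain; in particular every trivial extension of $(\overline f,\overline g)$ is of the form $(\overline{\overline f},\overline{\overline g})$ for trivial extensions $\overline{\overline f},\overline{\overline g}$ of $f,g$. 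Combining this with \propref{prop: contiguity results1}(c), which converts a pair of contiguity equivalences $\overline{\overline f}\simeq\overline{\overline f}{}'$, $\overline{\overline g}\simeq\overline{\overline g}{}'$ into $(\overline{\overline f},\overline{\overline g})\simeq(\overline{\overline f}{}',\overline{\overline g}{}')$, and with the ``enlarge everything to one common rectangle'' bookkeeping already used in the proof of \thmref{thm: extn-cont equiv}, shows that $[(\overline f,\overline g)]$ is independent both of the common rectangle and of the chosen representatives $f,g$.

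Finally I would check the two composites. Since $p_i\circ(\overline f,\overline g)$ recovers the $i$th coordinate, $p_1\circ(\overline f,\overline g)=\overline f\approx f$ and $p_2\circ(\overline f,\overline g)=\overline g\approx g$, so $\Phi\circ\Psi=\mathrm{id}$. Conversely, for a face sphere $h$ with domain $I_m\times I_n$ the coordinate maps $p_1\circ h$ and $p_2\circ h$ already share the domain $I_m\times I_n$, so no extension is needed in forming $\Psi([p_1\circ h],[p_2\circ h])$, and $(p_1\circ h,\,p_2\circ h)=h$ because both agree on every vertex; hence $\Psi\circ\Phi=\mathrm{id}$. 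Thus $\Phi$ is a bijective homomorphism, hence an isomorphism, and in particular $\Psi$ is automatically a homomorphism (so no separate multiplicativity check for $\Psi$ is needed, though one could verify it directly by juxtaposing rectangles). I expect the only genuine work to be the well-definedness of $\Psi$ — specifically the reindexing when $f\approx f'$ and $g\approx g'$ are witnessed over different rectangles and all four maps must first be trivially extended to a single common domain before \propref{prop: contiguity results1}(c) applies; this is routine but fiddly, and everything else is immediate from the universal property of the categorical product.
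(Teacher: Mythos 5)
Your proposal is correct, and it is built around the same homomorphism the paper uses, namely $\bigl((p_1)_\#,(p_2)_\#\bigr)$; the difference lies entirely in how bijectivity is verified. You construct an explicit two-sided inverse $\Psi([f],[g])=[(\overline f,\overline g)]$, which forces you to prove well-definedness: you must extend $f,f',g,g'$ to a single common rectangle, use the identity $(f,g)\circ\beta=(f\circ\beta,g\circ\beta)$ to see that trivial extensions of a coordinate map are coordinatewise trivial extensions, and then invoke \propref{prop: contiguity results1}(c). All of that is sound (the reindexing is exactly the bookkeeping from the proof of \thmref{thm: extn-cont equiv}), and once done, $\Phi\circ\Psi=\mathrm{id}$ and $\Psi\circ\Phi=\mathrm{id}$ are immediate from the universal property, with $h=(p_1\circ h,p_2\circ h)$ on the nose. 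The paper instead never defines an inverse map: it gets surjectivity by hitting $([f_1],[f_2])$ as the product $[(f_1,c_{y_0})]\cdot[(c_{x_0},f_2)]$ of two basepoint-padded spheres (thereby sidestepping the need to put $f_1$ and $f_2$ over a common rectangle), and gets injectivity by showing directly that $\Psi([f])$ trivial forces $f=(f_1,f_2)\approx(c_{x_0},c_{y_0})$, one coordinate at a time via \propref{prop: contiguity results1}(c). Your route buys an explicit formula for the inverse at the cost of the well-definedness argument; the paper's route avoids that argument at the cost of leaning on the already-established group structure. Both are complete proofs of the same statement.
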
 

\begin{proof}
The projections onto the coordinates $p_1 \colon X \times Y \to X$ and $p_2 \colon X \times Y \to Y$ 
 induce homomorphisms of Face groups and we may take them to be the coordinate homomorphisms of a homomorphism
$$\Psi:= \left( (p_1)_\#, (p_2)_\# \right) \colon F\left(X \times Y, (x_0, y_0)\right) \to F(X, x_0) \times F(Y, y_0).$$
We check that $\Psi$ is surjective and injective. Suppose we have $( [f_1], [f_2])  \in F(X, x_0) \times F(Y, y_0)$ with $f_1$ a face sphere in $X$ and $f_2$ a face sphere in $Y$.  If $f_1 \colon I_m \times I_n \to X$, then let $c_{y_0} \colon I_m \times I_n \to Y$ be the constant map defined on the same-sized rectangle. Then $(f_1, c_{y_0})$ is a  face sphere in $X \times Y$. Similarly, we have $( c_{x_0}, f_2)$ a face sphere in $X \times Y$. We calculate as follows: 
$$\begin{aligned}
\Psi\left(  [(f_1, c_{y_0})]  \right) &=    \left( (p_1)_\#( [(f_1, c_{y_0})],  (p_2)_\#( [(f_1, c_{y_0})] )\right) \\
&=  \left( (p_1)_\#( [(f_1, c_{y_0})]],  (p_2)_\#( [(f_1, c_{y_0})]) \right) \\
&=  \left(  [p_1 \circ (f_1, c_{y_0})],  [p_2\circ (f_1, c_{y_0})]  \right) \\
&=  \left(  [f_1],  [c_{y_0}] \right).
\end{aligned}
$$
A similar calculation shows that we have $\Psi\left(  [(c_{x_0}, f_2)]  \right) =   \left( [c_{x_0}],  [f_2] \right)$.  Then 
$$\begin{aligned}
\Psi\left(  [(f_1, c_{y_0})] \cdot   [(c_{x_0}, f_2)] \right) &= \Psi\left(  [(f_1, c_{y_0})] \right) \cdot \Psi\left(   [(c_{x_0}, f_2)] \right) =  \left(  [f_1],  [c_{y_0}] \right)\cdot \left( [c_{x_0}],  [f_2] \right) \\
&=  \left(  [f_1] \cdot [c_{x_0}],  [c_{y_0}] \cdot  [f_2] \right) =  \left(  [f_1],  [f_2] \right).
\end{aligned}$$
It follows that $\Psi$ is a surjection. 

Now suppose that $f$ is a face sphere in $X \times Y$ and we have $\Psi( [f] ) = ( [c_{x_0}], [c_{y_0}] )$.  We have $f = (f_1, f_2)$ with $f_1$ a face sphere in $X$ and $f_2$ a face sphere in $Y$ (both mapping from the same rectangle $I_m \times I_n$, say.   Then $(p_1)_\#([f])  = [p_1\circ(f_1, f_2)] = [f_1] = [c_{x_0}]$, by assumption.  This means we have $f_1 \approx c_{x_0}$, so there is some trivial extension and a contiguity equivalence 
$$f_1 \circ (\alpha^{\overline{m} - m}_m, \alpha^{\overline{n} - n}_n) \simeq c_{x_0}.$$

We now have some trivial extension of $f$ that satisfies
$$f \circ (\alpha^{\overline{m} - m}_m \times \alpha^{\overline{n} - n}_n) = \left( f_1 \circ (\alpha^{\overline{m} - m}_m \times \alpha^{\overline{n} - n}_n), \overline{f_2} \right) 
\simeq \left( c_{x_0}, \overline{f_2} \right),$$
with the contiguity equivalence following from part (c) of \propref{prop: contiguity results1} .  Turning to the second coordinate, we are also assuming that we have $(p_2)_\#([f])  = [p_2\circ(f_1, f_2)] = [f_2] = [c_{y_0}]$. The extension-contiguity equivalence $f_2 \approx c_{y_0}$ implies a contiguity equivalence between some trivial extension of $\overline{f_2}$ and some constant map $c_{y_0}$, where $\overline{f_2}$ is the trivial extension of $f_2$ from the first step.  Arguing as we did for the first coordinate results in an extension-contiguity equivalence $f \approx (c_{x_0}, c_{y_0})$.  It follows that $\Psi$ is also an injection.    
\end{proof}
 
\section{An ``Intrinsic" Description of the Face Group}\label{sec: combinatorial face group}

We may describe $F(X, x_0)$ in a way that positions it as a very natural higher-dimensional version of the edge group of a simplicial complex. The edge group of a simplicial complex is defined in terms of edge loops: sequences of vertices of $X$.  This formulation of the edge group is appealing as it has an algorithmic aspect to it that is lacking, say, in the definition of the fundamental group of a topological space.    But these edge loops transparently correspond to simplicial maps of the form $I_m \to X$ for various $m$. Our face group, meanwhile, is defined in terms of simplicial maps of the form $I_m \times I_n \to X$. But we can ``reverse engineer" this formulation, again in a very transparent way, to one that corresponds directly to the ``sequence of vertices" formulation of the edge group.     We must allow for some complications, since we are describing a higher-dimensional homotopy group.  First, recall the description of the edge group.

For $(X, x_0)$ a (based) simplicial complex, say an \emph{edge loop} (of length $m$) is a sequence $( v_0, v_1, \ldots, v_m )$ of vertices of $X$ with $v_0 = v_m = x_0$ and, for each $i=0, \ldots, m-1$, adjacent pairs $\{ v_i, v_{i+1}\}$ an edge ($1$-simplex) of $X$.  Then the edge group of $X$ is the set of equivalence classes of all edge loops (all lengths) under the  equivalence relation generated by the two types of (reflexive, symmetric) move:

(i)  $( v_0, \ldots, v_i, \ldots, v_m ) \sim ( v_0, \ldots, v_i, v_i, \ldots, v_m )$ (i.e., repeat a vertex or delete a repeated vertex);

(ii)   $( v_0, \ldots, v_{i-1}, v_i, v_{i+1}, \ldots, v_m ) \sim ( v_0, \ldots, v_{i-1}, v'_{i}, v_{i+1}, \ldots, v_m )$ when  $\{ v_{i-1}, v_i, v'_i \}$ and $\{ v_i, v'_i,  v_{i+1} \}$ are both simplices of $X$.  

Now this is not precisely the form of the generating relations given in \cite{Mau96}, for example, but is easily seen to be an equivalent phrasing.  In fact, this is nothing other than a group that may be described as a one-dimensional version of our face group, with the edge loops corresponding to simplicial maps $(I_m, \partial I_m) \to (X, x_0)$ and the equivalence relation on such maps being a suitable version of extension-contiguity equivalence.  This follows, since move (i) is transparently an extension and move (ii) is a contiguity (we check on the $1$-simplices).   So, we may formulate our face group in similar terms (a sort of ``intrinsic" formulation that involves only internal data concerned with $X$, rather than in terms of maps from the rectangle into $X$).

Suppose we have  a map $f \colon \left( I_{m} \times I_{n}, \partial (I_{m} \times I_{n}) \right) \to (X, x_0)$ that represents an equivalence class in the face group of $X$.  Restricting to each row of vertices gives a map
$$f (-, j)  \colon \left( I_{m} \times \{j\}, \{ (0, j), (m, j) \} \right) \to (X, x_0)$$
that is an edge loop (of length $m$) in $X$.  So, reading across the rows, working row-by-row from the bottom row ($j=0$) up to the top row ($j=n$), we may interpret $f$ as a sequence of $n+1$ edge loops that together satisfy certain coherence conditions as we move from one row to the next (between adjacent edge loops).  One way of expressing all this is as follows.     

\begin{definition}[Intrinsic Face Group]
By a \emph{face sphere (bis)} in $X$ (of size $m \times n$) we mean a sequence $( l^0, l^1, \ldots, l^n )$ of edge loops of length $m$ in $X$ that satisfy the following conditions (in which we write $l^j = ( v^j_0, \ldots, v^j_m )$ for each $j = 0, \ldots, n$):
\begin{itemize}
\item[(a)] Both $l^0$ and $l^n$ are the constant loop at $x_0$.  That is, we have $v^0_i = v^n_i = x_0$ for $i = 0, \ldots, m$. (Note that we also have $v^j_0 = v^j_m = x_0$ for $j = 0, \ldots, n$ as we assume that each row is an edge loop in $X$.)
\item[(b)] for each $j = 0, \ldots, n-1$ and  each $i = 0, \ldots, m-1$,  $\{ v^j_i, v^j_{i+1}, v^{j+1}_{i}, v^{j+1}_{i+1} \}$ is a  simplex of $X$; 
\end{itemize}
Then we define an equivalence relation on the set of all face spheres (of all dimensions) in $X$ as the one generated by the following types of (reflexive, symmetric)  move: 
\begin{itemize}
\item[(i)] $( l^0, \ldots,  l^j, \ldots, l^n ) \sim ( l^0, \ldots,  l^j, l^j, \ldots, l^n )$;
\item[(ii)] for each $j = 0, \ldots, n$, set $\widehat{l^j} = ( v^j_0, \ldots, v^j_i, v^j_i, \ldots, v^j_m )$ for some $i$ (same $i$ for all $j$), then  $( l^0, \ldots,  l^n ) \sim ( \widehat{l^0}, \ldots, \widehat{l^n} )$;
\item[(iii)] for some $1 \leq i \leq m-1$ and $1 \leq j \leq n-1$, suppose we have $l^j =  ( v^j_0, \ldots, v^j_{i-1}, v^j_i, v^j_{i-1}, \ldots,  v^j_m )$ and $\widetilde{l}^j =  ( v^j_0, \ldots, v^j_{i-1}, \widetilde{v}^j_i, v^j_{i-1}, \ldots,  v^j_m )$.  That is, the edge loops $l^j$ and $\widetilde{l}^j $ differ only in their $i$th entry.  Then we have $( l^0, \ldots, l^{j-1}, l^j, l^{j+1}, \ldots,  l^n ) \sim ( l^0, \ldots, l^{j-1}, \widetilde{l}^j, l^{j+1}, \ldots, l^n )$ when each of the following $4$ sets is a simplex of $X$:
$$\begin{aligned}
\{ v^{j-1}_{i-1}, v^{j-1}_{i}, v^{j}_{i-1}, v^{j}_{i}, \widetilde{v}^{j}_{i} \} \qquad & \qquad  \{  v^{j-1}_{i}, v^{j-1}_{i+1}, v^{j}_{i}, v^{j}_{i+1}, \widetilde{v}^{j}_{i} \} \\
\{ v^{j}_{i-1}, v^{j}_{i}, v^{j+1}_{i-1}, v^{j+1}_{i}, \widetilde{v}^{j}_{i} \} \qquad & \qquad  \{  v^{j}_{i}, v^{j}_{i+1}, v^{j+1}_{i}, v^{j+1}_{i+1}, \widetilde{v}^{j}_{i} \} \\
\end{aligned}$$

\end{itemize}
\end{definition}

In the above moves, (i) and (ii) correspond to repetition of a row or column, as discussed in \secref{sec: extn contiguity}.  Move (iii) amounts to changing the $(i, j)$th entry in the array-style matrix of values of a face sphere.  The conditions that must be met correspond to the contiguity condition on the four $3$-simplices of $I_{m} \times I_{n}$ that have $(i,j)$ as a vertex.  When we translate the change in value into maps, they correspond to the maps before and after the change being contiguous.

\section{Simplicial Approximation; Maps $(I_{m, n} , \partial I_{m, n}) \to (X, x_0)$}\label{sec: simp approx}  

We mostly follow the discussion of simplicial approximation from \cite{Bre}.  For certain details, especially for homotopy of simplicial approximations, \cite{HiWi60} is a useful source. In what follows we review some standard material on simplicial approximation. We include proofs for the sake of completeness; they are short in any case. But our treatment here departs from a standard treatment as we do not use barycentric subdivision.  Rather, we develop our own versions of the standard results in the form we want them for the main result  (\thmref{thm: Face group iso homotopy group}).

Let $I_{m,n}$ denote the simplicial complex given by the unit square $I^2$ subdivided evenly into $m\times n$ sub-rectangles (each of dimensions $1/m \times 1/n$) and triangulated by dividing each sub-rectangle into two $2$-simplices by the diagonal from bottom left to top right.   An illustration is given in
Figure~\ref{fig: square triangulation}.  
\begin{figure}[h!]
\[
\begin{tikzpicture}[scale=2]
\foreach \x in {0, 0.2, 0.4, 0.6, 0.8, 1} {
 \foreach \y in {0, 0.25, 0.5, 0.75, 1} {
   \node[inner sep=1.5pt, circle, fill=black]  at (\x, \y) {};
  }
 }
\foreach \x in {0, 0.2, 0.4, 0.6, 0.8, 1} {
\draw (\x,0) -- (\x,1);
 }
 \foreach \y in {0, 0.25, 0.5, 0.75, 1} {
\draw (0,\y) -- (1,\y);
 }
\foreach \i in {0, 0.2, 0.4, 0.6, 0.8} {
 \foreach \j in {0, 0.25, 0.5, 0.75} {
    \draw (\i,\j) -- (\i+0.2,\j+0.25);
  }
  }
\end{tikzpicture}
\]
\caption{$I^2$ triangulated as  $I_{5, 4}$\label{fig: square triangulation}}
\end{figure}
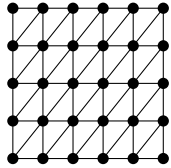
The simplicial complex $I_{m, n}$ is the \emph{Cartesian product} of the simplicial complexes $I_m$ and $I_n$ (each with their natural ordering of vertices),  It is combinatorially different from, and does not have the categorical properties of, our categorical product $I_m \times I_n$.   

Thus, $I_{m,n}$ is a $2$-dimensional simplicial complex and we have $|I_{m,n}| = I^2$. For a point (generally not a vertex) $y \in |K|$ with $K$ any simplicial complex, \emph{the carrier of $y$} is the smallest (closed) simplex of $|K|$ that contains $y$.  Denote the carrier of $y$ by $\mathrm{carr}(y)$  (or by $\mathrm{carr}_K(y)$ if we need to emphasize the $K$).   If $g \colon I_{m,n} \to X$ is a simplicial map, then we denote by $|g| \colon I^2 \to |X|$ its spatial realization (a continuous map).

Again, suppose $X$ is an (abstract)  simplicial complex  with vertices $\{x_i\}$ and a distinguished choice of vertex $x_0$ as basepoint.  Let $f \colon (I^2, \partial I^2) \to (|X|, x_0)$ be a continuous map.  Then a simplicial map $g \colon (I_{m,n}, \partial I_{m,n}) \to (X, x_0)$ is \emph{a simplicial approximation to $f$} if $|g|(y) \in \mathrm{carr}\big( f(y) \big)$ for each $y \in I^2 = |I_{m, n}|$. 

\begin{remark}\label{rem: simplicial approximation to self}
 If $g \colon (I_{m,n}, \partial I_{m,n}) \to (X, x_0)$ is a simplicial map, then $g$ is a simplicial approximation to its own spatial realization $|g| \colon (I^2, \partial I^2) \to (|X|, x_0)$.
\end{remark}

\begin{proposition}
If $g \colon (I_{m,n}, \partial I_{m,n}) \to (X, x_0)$ is a simplicial approximation to $f \colon (I^2, \partial I^2) \to (|X|, x_0)$, then $|g| \simeq f$ (homotopic relative the boundary, as continuous maps).
\end{proposition}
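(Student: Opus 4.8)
The plan is to use the standard straight-line homotopy inside carriers, adapted to our triangulation $I_{m,n}$. The key observation is that for each $y \in I^2 = |I_{m,n}|$, both $f(y)$ and $|g|(y)$ lie in the single closed simplex $\mathrm{carr}\big(f(y)\big)$ of $|X|$: the point $f(y)$ lies in its own carrier by the definition of carrier, and $|g|(y) \in \mathrm{carr}\big(f(y)\big)$ is exactly the condition that $g$ be a simplicial approximation to $f$. Since a closed simplex is convex, the line segment joining $f(y)$ to $|g|(y)$ is contained in $\mathrm{carr}\big(f(y)\big) \subseteq |X|$. So I would define $H \colon I^2 \times [0,1] \to |X|$ by $H(y,t) = (1-t)\, f(y) + t\,|g|(y)$, the affine combination being formed inside $\mathrm{carr}\big(f(y)\big)$, and then check that this is a homotopy rel $\partial I^2$.

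The one point that needs care is the continuity of $H$ as a map \emph{into} $|X|$ (rather than into an ambient vector space), since the weak topology on $|X|$ can be delicate when $X$ is infinite. I would dispose of this by a compactness reduction: $I^2$ is compact, so $f(I^2)$ is compact and meets only finitely many open simplices of $|X|$; letting $L \subseteq X$ be the finite subcomplex spanned by the closures of those simplices (which, by the simplicial approximation property, also contains every carrier $\mathrm{carr}\big(f(y)\big)$ and hence all of $|g|(I^2)$), we have $f(I^2) \cup |g|(I^2) \subseteq |L|$. Realizing the finite complex $|L|$ affinely in some $\R^N$, the formula for $H$ is literally a convex combination of two continuous $\R^N$-valued maps, so it is continuous, and its image lies in $|L| \subseteq |X|$; since $|L| \hookrightarrow |X|$ is a topological embedding, $H$ is continuous as a map into $|X|$.

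It then remains to read off the endpoints and the boundary condition. Clearly $H(\cdot,0) = f$ and $H(\cdot,1) = |g|$. Because $g$ is a map of pairs $(I_{m,n}, \partial I_{m,n}) \to (X, x_0)$, it sends every vertex of $\partial I_{m,n}$ to $x_0$, so $|g|$ is constant at $x_0$ on $|\partial I_{m,n}| = \partial I^2$; and $f$ is constant at $x_0$ on $\partial I^2$ by hypothesis. Hence for $y \in \partial I^2$ we get $H(y,t) = (1-t)\,x_0 + t\,x_0 = x_0$ for all $t$, so $H$ is a homotopy rel $\partial I^2$, giving $|g| \simeq f$ rel $\partial I^2$. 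The only genuine obstacle is the continuity argument of the second paragraph; once the problem has been localized to a finite subcomplex, everything else is immediate from the definitions of carrier and of simplicial approximation.
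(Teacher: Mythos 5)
Your proof is correct and follows essentially the same route as the paper: the straight-line homotopy $H(y,t)=(1-t)f(y)+t|g|(y)$ formed inside the convex carrier $\mathrm{carr}(f(y))$, with the endpoints and the boundary condition read off directly. The only difference is that you spell out the continuity of $H$ into $|X|$ via restriction to a finite subcomplex, a point the paper's (much terser) proof leaves implicit.
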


\begin{proof}
Use the straight-line homotopy in each simplex. This homotopy pieces together well into a continuous map since simplices overlap in simplices and so the homotopy will agree on overlaps.  The homotopy will be stationary at points on the boundary $\partial I^2$. 
\end{proof}

For a vertex $v \in K$ any simplicial complex, define $\mathrm{st}(v)$ (or $\mathrm{st}_K(v)$ if we need to emphasize the $K$), \emph{the (open) star of $v$ (in $|K|$)}, as
$$\mathrm{st}(v) := \{ y \in |K| \mid v \in \mathrm{carr}(y) \}.$$

\begin{proposition}[{\cite[Prop.22.8]{Bre}}]\label{prop: star cover}
For $\{ x_i \}$ the vertices of $X$, the sets $\{ \mathrm{st}(x_i) \}$ form an open cover of $|X|$.
\end{proposition}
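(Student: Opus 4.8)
The plan is to verify the two defining properties of an open cover separately: that each $\mathrm{st}(x_i)$ is open in $|X|$, and that the collection $\{\mathrm{st}(x_i)\}$ covers $|X|$.

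For the covering property, which is the easier half, I would take an arbitrary point $y \in |X|$. By definition its carrier $\mathrm{carr}(y)$ is a nonempty simplex of $X$, hence has at least one vertex; choose any vertex $v$ of $\mathrm{carr}(y)$, say $v = x_i$. Since a vertex lies in a closed simplex precisely when it is one of the vertices of that simplex, we have $v \in \mathrm{carr}(y)$, which is exactly the condition $y \in \mathrm{st}(v) = \mathrm{st}(x_i)$. Thus every point of $|X|$ lies in at least one star.

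For openness, I would argue via the complement. Fix a vertex $v = x_i$. A point $y$ fails to lie in $\mathrm{st}(v)$ exactly when $v \notin \mathrm{carr}(y)$, equivalently when $y$ lies in some closed simplex $|\sigma|$ of $|X|$ with $v$ not a vertex of $\sigma$. Hence $|X| \setminus \mathrm{st}(v)$ is the union of all closed simplices $|\sigma|$ for which $v \notin \sigma$. This family of simplices is closed under passing to faces (a face of a $v$-free simplex is again $v$-free), so it is a subcomplex of $X$, and a subcomplex realizes to a closed subset of $|X|$. Therefore $\mathrm{st}(v)$ is open. (Alternatively, one can note that in barycentric coordinates $\mathrm{st}(v)$ is the preimage of $(0,1]$ under the continuous coordinate function $t_v$, but the subcomplex argument is more in keeping with the combinatorial spirit here.)

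The only place requiring any care is the claim that a subcomplex of $X$ is a closed subset of $|X|$: for a finite complex this is immediate since $|X|$ is then a finite union of closed simplices, and in general it follows from the weak topology on $|X|$, since a subcomplex meets each closed simplex $|\sigma|$ in a union of faces of $\sigma$, which is closed in $|\sigma|$. Everything else is a direct unwinding of the definitions of carrier and star, so I expect no serious obstacle in this proof.
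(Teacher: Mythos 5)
Your proposal is correct and follows essentially the same route as the paper: covering is obtained by choosing any vertex of $\mathrm{carr}(y)$, and openness by exhibiting the complement of $\mathrm{st}(v)$ as a union of closed simplices not containing $v$ (the paper writes this union as $\bigcup_{y \notin \mathrm{st}(v)} \mathrm{carr}(y)$, which is the same set). Your extra remark justifying closedness via the subcomplex/weak-topology observation is a welcome refinement of a point the paper leaves implicit.
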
 

\begin{proof}
The stars of vertices cover since, for any $y \in |X|$, we have $y \in \mathrm{st}(v)$ for any vertex $v$ of $\mathrm{carr}(y)$.  Each $\mathrm{st}(v)$ is open because $|K| - \mathrm{st}(v)$ is a union of (closed) simplices and hence closed.  To see this latter point, note that if we assume $y \not\in \mathrm{st}(v)$, then in fact $\mathrm{st}(v)$ and $\mathrm{carr}(y)$ must be disjoint.  As $y \in \mathrm{carr}(y)$, we may write, for each vertex $v$, 
$$|K| - \mathrm{st}(v) = \bigcup_{y \not\in \mathrm{st}(v)} \mathrm{carr}(y).$$
\end{proof}

The next result is our own version of a standard result in the development of simplicial approximation.  We state and prove it here, since it is this particular (and non-standard) form that we need for our development.  

\begin{theorem}[Simplicial Approximation]\label{thm: simp approx}
Let $f \colon (I^2, \partial I^2) \to (|X|, x_0)$ be a continuous map (of pairs).  For some suitably large $m$, there is a simplicial map $g \colon (I_{m,m}, \partial I_{m,m}) \to (X, x_0)$ that is a simplicial approximation to $f$ (and hence $f$ and $|g|$ are homotopic relative the boundary as continuous maps $f \simeq |g|\colon (I^2, \partial I^2) \to (|X|, x_0)$).
\end{theorem}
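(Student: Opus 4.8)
The plan is to follow the classical proof of the simplicial approximation theorem, but with iterated barycentric subdivision of the domain replaced by the passage from $I_{1,1}$ to $I_{m,m}$ for $m$ large. The starting point is \propref{prop: star cover}: the open stars $\{\mathrm{st}(x_i)\}$ of the vertices of $X$ form an open cover of $|X|$, so $\{f^{-1}(\mathrm{st}(x_i))\}$ is an open cover of the compact metric space $I^2$. Let $\lambda>0$ be a Lebesgue number for this cover. Since each $2$-simplex of $I_{m,m}$ has Euclidean diameter $\sqrt{2}/m$, the closed star of any vertex of $I_{m,m}$ (the union of the closed simplices containing that vertex) has diameter at most $2\sqrt{2}/m$. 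Choosing $m$ with $2\sqrt{2}/m<\lambda$ thus guarantees that, for every vertex $v$ of $I_{m,m}$, the closed star of $v$ lies inside some $f^{-1}(\mathrm{st}(x_i))$, i.e. $f(\mathrm{st}(v))\subseteq \mathrm{st}(x_i)$ for a suitable $i$.

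For each vertex $v$ of $I_{m,m}$ I would then choose a vertex $x_{i(v)}$ of $X$ with $f(\mathrm{st}(v))\subseteq \mathrm{st}(x_{i(v)})$ and set $g(v)=x_{i(v)}$. A small observation handles the based/relative condition: if $v\in\partial I_{m,m}$ then $v\in\partial I^2$, so $f(v)=x_0$, and since $v\in\mathrm{st}(v)$ we get $x_0=f(v)\in\mathrm{st}(x_{i(v)})$, which forces $x_{i(v)}\in\mathrm{carr}(x_0)=\{x_0\}$; hence $g(v)=x_0$ automatically, and $g$ is a map of pairs $(I_{m,m},\partial I_{m,m})\to(X,x_0)$.

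It then remains to check that $g$ is a simplicial map and a simplicial approximation to $f$, both of which rest on the standard fact that the open stars of the vertices of a simplex $\sigma$ of $I_{m,m}$ have nonempty common intersection (containing the open simplex of $\sigma$). If $\{v_0,\ldots,v_k\}$ spans a simplex, then $\emptyset\neq f\big(\bigcap_j \mathrm{st}(v_j)\big)\subseteq \bigcap_j f(\mathrm{st}(v_j))\subseteq \bigcap_j \mathrm{st}(g(v_j))$; any point $y$ of the last set has each $g(v_j)\in\mathrm{carr}(y)$, so $\{g(v_0),\ldots,g(v_k)\}$ is a face of a simplex of $X$ and $g$ is simplicial. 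For the approximation property, fix $y\in I^2$ and let $\sigma=\mathrm{carr}_{I_{m,m}}(y)$ with vertices $v_0,\ldots,v_k$; then $y\in\mathrm{st}(v_j)$, hence $f(y)\in\mathrm{st}(g(v_j))$, i.e. each $g(v_j)$ is a vertex of $\mathrm{carr}(f(y))$. Since $|g|(y)$ is the convex combination of the $g(v_j)$ determined by the barycentric coordinates of $y$, and $\mathrm{carr}(f(y))$ is convex, $|g|(y)\in\mathrm{carr}(f(y))$. The homotopy claim then follows from the earlier proposition saying a simplicial approximation is homotopic rel boundary to the map it approximates.

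The only genuinely non-routine point, and the reason for stating the result in this form, will be the mesh estimate in the first paragraph: one needs to know that the simplices of $I_{m,m}$ shrink uniformly as $m\to\infty$, so that increasing $m$ accomplishes what barycentric subdivision usually does. This is immediate from the explicit Euclidean model of $I_{m,m}$, but it is the ingredient that makes the subdivision-free approach work, and it is precisely the point that will have to be revisited in \secref{sec: Imn vs. ImxIn} when passing from $I_{m,n}$ to the categorical product $I_m\times I_n$.
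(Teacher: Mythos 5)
Your proposal is correct and follows essentially the same argument as the paper: the Lebesgue-number argument applied to the cover $\{f^{-1}(\mathrm{st}(x_i))\}$, the mesh estimate $2\sqrt2/m$ for stars in $I_{m,m}$, and the standard nonempty-intersection-of-stars check for simpliciality and the approximation property. The only (harmless) difference is that you deduce $g(v)=x_0$ on boundary vertices from $\mathrm{carr}(x_0)=\{x_0\}$, whereas the paper simply decrees it there; your observation in fact shows the two conventions agree.
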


\begin{proof}
From Proposition~\ref{prop: star cover}, $\{ f^{-1}\big(\mathrm{st}(x_i)\big) \}_{x_i \in V(X)}$---the inverse images of stars of vertices of $X$---form an open cover of $I^2$.  Since $I^2$ is a compact metric space, the Lebesgue covering lemma applies and there is some Lebesgue number $\delta$ for this covering of $I^2$.  On the other hand, for each vertex $a \in I_{m,m}$, we may observe that 
$\mathrm{diam}\big( \mathrm{st}_{I_{m,m}}(a)\big) \leq 2\sqrt{2}/m$ (with equality whenever $a$ is not a vertex on the boundary).  So, let $m$ be large enough so that $2\sqrt{2}/m < \delta$.  Then, by the Lebesgue lemma, each of the subsets  $\mathrm{st}_{I_{m,m}}(a)$  of $I_{m,m}$, when $a$ is a vertex of $I_{m,m}$, is contained in at least one of the open sets  $f^{-1}\big(\mathrm{st}(x_i)\big)$.  

On vertices $a$ of $I_{m,m}$ contained in the boundary $\partial I_{m,m}$, define $g(a) = x_0$.  For every other vertex $a$ of $I_{m,m}$, we have that $f\big(\mathrm{st}_{I_{m,m}}(a)\big)$ is contained in at least one open set  $\mathrm{st}(x_i)$.  For each vertex $a$ of $I_{m,m}$ not contained in the boundary choose one such $x_i$ and define $g(a) = x_i$.  Subject to this choice, this defines $g$ on vertices of $I_{m,m}$.  We now check that any such choice of $g$ extends to a simplicial map  $g \colon (I_{m,m}, \partial I_{m,m}) \to (X, x_0)$ and gives a simplicial approximation to $f$.  

\emph{$g$ is a simplicial map}: It is sufficient to check that $g(\sigma)$ is a simplex of $X$ for each $\sigma = \{ a_1, a_2, a_3 \}$ a $2$-simplex of $I_{m, m}$.    From the definition of $g$, we have $f\big( \mathrm{st}(a_i) \big) \subseteq \mathrm{st}\big( g(a_i) \big)$ for each $i$.    Now, the intersection  $\mathrm{st}(a_1) \cap \mathrm{st}(a_2) \cap \mathrm{st}(a_3)$ is non-empty---indeed, it consists of the (closed) simplex $\sigma$.  Thus, the inclusions of sets 
$$f \big( \bigcap_{i=1, 2, 3}  \mathrm{st}(a_i) \big)  \subseteq  \bigcap_{i=1, 2, 3}  f \big(  \mathrm{st}(a_i) \big) \subseteq  \bigcap_{i=1, 2, 3}   \mathrm{st}\big( g(a_i) \big)$$
give that the intersection $\mathrm{st}\big( g(a_1) \big) \cap \mathrm{st}\big( g(a_2) \big) \cap \mathrm{st}\big( g(a_3) \big)$ is non-empty.  From the definitions, this implies that each $g(a_i) \in \mathrm{carr}(x)$ for any point $x$ in the non-empty intersection. Since $\mathrm{carr}(x)$ is a simplex of $|X|$ and each $g(a_i)$ is a vertex of $X$, it follows that $g(\sigma) = \{ g(a_1), g(a_2), g(a_3) \}$ is a simplex of $X$ and thus $g$ is a simplicial map. 

\emph{$g$ is a simplicial approximation to $f$}: We must show that $|g|(y) \in \mathrm{carr}\big(f(y)\big)$ for each $y \in I^2$.  Suppose that $\mathrm{carr}(y) = \sigma$, a simplex of $I_{m, m}$.  For each vertex $a$ of $\sigma$, we have $a \in  \mathrm{carr}(y)$ or $y \in \mathrm{st}(a)$. Then $f(y) \in f\big( \mathrm{st}(a) \big) \subseteq \mathrm{st} \big( g(a) \big)$ from the definition of $g$ on vertices.  Now  $f(y) \in  \mathrm{st} \big( g(a) \big)$ implies that $g(a) \in \mathrm{carr}\big(f(y)\big)$.  But $\mathrm{carr}\big(f(y)\big)$ is a simplex of $X$ and it follows that $g(\sigma) \subseteq \mathrm{carr}\big(f(y)\big)$.  In particular, we have $|g|(y) \in \mathrm{carr}\big(f(y)\big)$.  
\end{proof}

\begin{lemma}\label{lem: same map contig}
If two simplicial maps $g_1, g_2 \colon (I_{m, m}, \partial I_{m,m}) \to (X, x_0)$ are simplicial approximations to the same continuous map $f  \colon (I^2, \partial I^2) \to (|X|, x_0)$, then $g_1$ and $g_2$ are contiguous.
\end{lemma}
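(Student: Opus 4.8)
The plan is the standard one for comparing simplicial approximations, adapted to our domain $I_{m,m}$. By the definition of contiguity it suffices to show that $g_1(\sigma) \cup g_2(\sigma)$ is a simplex of $X$ for every simplex $\sigma$ of $I_{m,m}$ (indeed, it would be enough to check this for the $2$-simplices, but the argument below is uniform in $\sigma$). So I fix a simplex $\sigma$ of $I_{m,m}$ and choose the barycenter $y$ of $\sigma$, so that $\mathrm{carr}_{I_{m,m}}(y) = \sigma$ and all barycentric coordinates of $y$ with respect to the vertices of $\sigma$ are strictly positive.

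The one point requiring a little care is the following elementary observation, which I would state and use: for any simplicial complex $K$, any simplex $\tau$ of $|K|$, and any point $z \in \tau$, the carrier $\mathrm{carr}(z)$ is the face of $\tau$ spanned by exactly those vertices of $\tau$ whose barycentric coordinate in $z$ is nonzero; in particular $\mathrm{carr}(z)$ is a face of $\tau$. Applying this with $\tau = g_1(\sigma)$ (the image simplex in $|X|$) and $z = |g_1|(y)$: since the barycentric coordinates of $y$ are all positive and $g_1$ maps the vertex set of $\sigma$ onto that of $g_1(\sigma)$, every vertex of $g_1(\sigma)$ has positive barycentric coordinate in $|g_1|(y)$; hence $\mathrm{carr}\big(|g_1|(y)\big)$ is precisely the simplex of $X$ with vertex set $g_1(\sigma)$. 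The same reasoning gives $\mathrm{carr}\big(|g_2|(y)\big)$ equal to the simplex on $g_2(\sigma)$.

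Now I invoke the hypothesis that $g_1$ and $g_2$ are simplicial approximations to $f$: this gives $|g_1|(y) \in \mathrm{carr}\big(f(y)\big)$ and $|g_2|(y) \in \mathrm{carr}\big(f(y)\big)$. Since $\mathrm{carr}\big(f(y)\big)$ is a single simplex of $X$, the observation above shows that both $\mathrm{carr}\big(|g_1|(y)\big)$ and $\mathrm{carr}\big(|g_2|(y)\big)$ are faces of $\mathrm{carr}\big(f(y)\big)$; consequently $g_1(\sigma) \cup g_2(\sigma)$ is contained in the vertex set of $\mathrm{carr}\big(f(y)\big)$ and is therefore a simplex of $X$. (Simplices $\sigma$ contained in $\partial I_{m,m}$ need no separate treatment: there $f(y) = x_0$, so $\mathrm{carr}\big(f(y)\big) = \{x_0\}$ and $g_1(\sigma) = g_2(\sigma) = \{x_0\}$.) As $\sigma$ was arbitrary, $g_1 \sim g_2$. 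I do not expect any real obstacle here; this is a routine fact, and the only thing to watch is the identification of $\mathrm{carr}\big(|g_i|(y)\big)$ with the simplex on $g_i(\sigma)$ when $g_i$ is not injective on $\sigma$, which is exactly why $y$ is taken in the open simplex.
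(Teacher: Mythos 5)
Your proof is correct and follows essentially the same route as the paper: pick $y$ with $\mathrm{carr}(y)=\sigma$, use $g_i(\sigma)=\mathrm{carr}\bigl(|g_i|(y)\bigr)\subseteq\mathrm{carr}\bigl(f(y)\bigr)$, and conclude that $g_1(\sigma)\cup g_2(\sigma)$ lies in a single simplex. The only difference is that you prove the identity $g_i\bigl(\mathrm{carr}(y)\bigr)=\mathrm{carr}\bigl(|g_i|(y)\bigr)$ via barycentric coordinates at the barycenter, whereas the paper simply cites it from Bredon (Lem.\ 22.5).
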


\begin{proof} For suppose that $\sigma$ is a simplex of $I_{m, m}$.  We may write $\sigma = \mathrm{carr}(y)$ for suitable $y$.  Then, since each $g_i$ is a simplicial map and approximates $f$, we have
$$g_i(\sigma) = g_i\big( \mathrm{carr}(y) \big) =   \mathrm{carr}\big(|g_i|(y) \big) \subseteq  \mathrm{carr}\big(f(y) \big).$$
The middle equality here is explained (for any simplicial map) in \cite[Lem.22.5]{Bre} and the last containment follows directly since each $g_i$ approximates $f$.  But  $\mathrm{carr}\big(f(y) \big)$ is some simplex of $X$. It follows that $g_1(\sigma) \cup g_2(\sigma)$ must be a simplex and hence we have $g_1 \sim g_2$.
\end{proof}

To discuss simplicial approximation and homotopy in the way we want to, we introduce a \emph{subdivision map}. Notice that this is a departure from the use of barycentric subdivision as in a standard development of simplicial approximation.

\begin{definition}\label{def: subdivision}
For $I_{m, n}$ a triangulation of $I^2$, and any $k \geq 2$, define a simplicial map
$$\rho_k \colon I_{km, kn} \to I_{m, n}$$
as that induced by the vertex map given as follows: For $(a/km, b/kn)$ a vertex of $I_{km, kn}$, write $a = ki+r$ and $b = kj+s$ with $0 \leq r, s < k$.  Then set
$$\rho_k(a/km, b/kn) = \rho_k((ki+r)/km, (kj+s)/kn) = (i/m, j/n).$$
\end{definition} 

\begin{remark}
For each $k$, the triangulation $I_{km, kn}$ may be regarded as a subdivision of $I_{m, n}$ in the sense that the vertices of $I_{m, n}$ are amongst those of $I_{km, kn}$. The same is not true, for example, of the two triangulations  $I_{m+1, n}$ and $I_{m, n}$ of $I^2$---the vertices of $I_{m+1, n}$ do not contain those of $I_{m, n}$. For some purposes, the ``finer" triangulations obtained by taking $m$ and $n$ larger in $I_{m, n}$ may be used to the same effect as barycentric subdivision in the usual discussion of simplicial approximation.  For our immediate purposes, though, we do actually want to work with the particular triangulations of Definition~\ref{def: subdivision}.     
\end{remark}

\begin{lemma}\label{lem: rho approx id}
Each subdivision map $\rho_k \colon I_{km, kn} \to I_{m, n}$ is a simplicial approximation of the (continuous) identity map $\mathrm{id}\colon I^2 \to I^2$.
\end{lemma}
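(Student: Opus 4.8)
The plan is to verify the defining condition for simplicial approximation directly from the definitions, using only the combinatorial structure of the triangulations $I_{km,kn}$ and $I_{m,n}$. Recall we must show that for each point $y \in I^2 = |I_{km,kn}|$ we have $|\rho_k|(y) \in \mathrm{carr}_{I_{m,n}}\big(\mathrm{id}(y)\big) = \mathrm{carr}_{I_{m,n}}(y)$. Equivalently, by the characterization of simplicial approximation in terms of stars (\propref{prop: star cover} and the surrounding discussion), it suffices to check that for each vertex $a$ of $I_{km,kn}$ we have $\mathrm{st}_{I_{km,kn}}(a) \subseteq \mathrm{id}^{-1}\big(\mathrm{st}_{I_{m,n}}(\rho_k(a))\big) = \mathrm{st}_{I_{m,n}}(\rho_k(a))$; that is, every $I_{km,kn}$-star of a vertex $a$ lands inside the $I_{m,n}$-star of its image vertex $\rho_k(a)$.

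First I would fix a vertex $a = (a/km, b/kn)$ of $I_{km,kn}$ and write $a = ki + r$, $b = kj + s$ with $0 \le r,s < k$, so that $\rho_k(a) = (i/m, j/n)$. The key geometric observation is that the open star $\mathrm{st}_{I_{km,kn}}(a)$ is contained in the open set $\big((i/m,(i+1)/m) \cup \{\text{appropriate closed faces}\}\big)$—more precisely, $\mathrm{st}_{I_{km,kn}}(a)$ is contained in the union of the (relatively open) cells of the coarse grid $I_{m,n}$ that have $(i/m, j/n)$ as a vertex, since every point in $\mathrm{st}_{I_{km,kn}}(a)$ lies within horizontal distance strictly less than $1/m$ and vertical distance strictly less than $1/n$ of $(i/m,j/n)$ when $a$ is interior, with the analogous (smaller) estimate at the boundary. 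Then I would observe that this union of cells is exactly $\mathrm{st}_{I_{m,n}}\big((i/m,j/n)\big) = \mathrm{st}_{I_{m,n}}(\rho_k(a))$, using the definition of the star and the fact that $I_{m,n}$ is the standard triangulation of the grid. Care is needed with the diagonal edges in each sub-rectangle: a point just below the coarse diagonal from $(i/m,j/n)$ to $((i+1)/m,(j+1)/n)$ lies in the lower triangle, which still has $(i/m,j/n)$ as a vertex, so it remains in the star; one should double-check that no point of the fine star can cross into a coarse cell not incident to $(i/m,j/n)$, which is where the strict inequalities $r,s < k$ are used.

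Having established $\mathrm{st}_{I_{km,kn}}(a) \subseteq \mathrm{st}_{I_{m,n}}(\rho_k(a))$ for every vertex $a$, the conclusion that $\rho_k$ is a simplicial approximation to $\mathrm{id}$ follows by the now-standard argument (exactly as in the proof of \thmref{thm: simp approx}): for $y \in I^2$ with $\mathrm{carr}_{I_{km,kn}}(y) = \tau$, each vertex $a$ of $\tau$ satisfies $y \in \mathrm{st}_{I_{km,kn}}(a) \subseteq \mathrm{st}_{I_{m,n}}(\rho_k(a))$, hence $\rho_k(a) \in \mathrm{carr}_{I_{m,n}}(y)$; since $\mathrm{carr}_{I_{m,n}}(y)$ is a simplex of $I_{m,n}$ and $|\rho_k|(y)$ is a convex combination of the $\rho_k(a)$, we get $|\rho_k|(y) \in \mathrm{carr}_{I_{m,n}}(y) = \mathrm{carr}_{I_{m,n}}(\mathrm{id}(y))$ as required. (This also re-confirms that $\rho_k$ is simplicial, though that was already built into \defref{def: subdivision}.)

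I expect the main obstacle to be the bookkeeping around the diagonal edges and the boundary vertices—verifying cleanly that the fine star of $a$ cannot spill over into a coarse cell not containing $\rho_k(a)$, especially in the corner cases where $r = 0$ or $s = 0$ (so $a$ sits on a coarse grid line) and the fine star straddles two coarse columns or rows. The honest way to handle this is to describe $\mathrm{st}_{I_{m,n}}(v)$ for a vertex $v = (i/m,j/n)$ explicitly as the set of points $(x,y)$ with $|x - i/m| < 1/m$, $|y - j/n| < 1/n$, and additionally $(x-i/m)/(1/m)$ and $(y-j/n)/(1/n)$ not separated by the relevant diagonal, and then check the fine star is contained in it by the coordinate estimates above; once phrased this way the verification is routine.
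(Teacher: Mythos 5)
Your proof is correct in outline but is organized differently from the paper's. The paper verifies the defining carrier condition directly: it fixes $y$, runs a case analysis on where $y$ sits in the coarse triangulation (interior of the upper-left triangle of a sub-rectangle, on the diagonal, on a vertical or horizontal edge, lower-right triangle), identifies $\mathrm{carr}_{I_{m,n}}(y)$ in each case, and checks that $|\rho_k|(y)$ stays inside it. You instead verify the star criterion $\mathrm{st}_{I_{km,kn}}(a)\subseteq \mathrm{st}_{I_{m,n}}\big(\rho_k(a)\big)$ for every fine vertex $a$ and then rerun the deduction from the second half of the proof of \thmref{thm: simp approx}. Both routes are legitimate; yours reduces the geometry to one inequality check per vertex and reuses machinery already present in the paper, while the paper's version stays entirely at the level of carriers and never needs an explicit description of the stars.

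One step of your sketch needs tightening. The containment of the fine star in the coarse star does \emph{not} follow from the horizontal and vertical distance estimates alone: in normalized coordinates $u=mx-i$, $v=ny-j$, those estimates only place the fine star in the box $|u|<1$, $|v|<1$, and when $|r-s|=k-1$ that box contains points with $|u-v|\ge 1$, which lie in the far triangle of the upper-left or lower-right coarse cell and hence outside $\mathrm{st}_{I_{m,n}}\big(\rho_k(a)\big)=\{|u|<1,\ |v|<1,\ |u-v|<1\}$. The repair is the explicit description you propose at the end, but applied to \emph{both} stars: the fine star carries its own diagonal constraint $|(u-v)-(r-s)/k|<1/k$, and combining this with $|r-s|\le k-1$ gives $|u-v|<1$, just as $|u-r/k|<1/k$ with $r\le k-1$ gives $|u|<1$. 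With that third inequality recorded, the verification closes and the rest of your argument goes through.
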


\begin{proof}
For $y \in I^2$ (considered as the spatial realization of $I_{km, kn}$), we want to show that $|\rho_k|(y) \in \mathrm{carr}_{I_{m, n}}(y)$.  Suppose that 
$$y \in [i/m, (i+1)/m] \times [j/n, (j+1)/n] \subseteq I^2.$$
If $y$ is a point in the interior (topologically speaking) of $[i/m, (i+1)/m] \times [j/n, (j+1)/n]$ and above the diagonal from $(i/m, j/n)$ to $\left( (i+1)/m, (j+1)/n\right)$, then  $\mathrm{carr}_{I_{m, n}}(y)$ is the triangle (including edges and vertices) with vertices 
\begin{equation}\label{eq: upper left}
\{   (i/m, j/n), \left( (i+1)/m, (j+1)/n\right), ( i/m, (j+1)/n) \}.
\end{equation}
For such a $y$, $|\rho_k|$ maps $y$ to a point that is either in the interior of this triangle or on its boundary. If $y$ is a point on the diagonal from $(i/m, j/n)$ to $\left( (i+1)/m, (j+1)/n\right)$, then  $\mathrm{carr}_{I_{m, n}}(y)$ is the diagonal itself (including the endpoints) and $|\rho_k|$ maps such a $y$ to a point that is again on this diagonal.   If $y$ is any point on the left-hand edge from $(i/m, j/n)$ to $( i/m, (j+1)/n)$, then  $\mathrm{carr}_{I_{m, n}}(y)$ is the edge itself (including the endpoints) and $|\rho_k|$ maps such a $y$ to a point that is again on this edge. Likewise for the horizontal edge from  $( i/m, (j+1)/n)$ to $\left( (i+1)/m, (j+1)/n\right)$.  So far, we have confirmed that for points $y$  in the triangle (including edges and vertices) with vertices as in (\ref{eq: upper left}), we have $|\rho_k|(y) \in \mathrm{carr}_{I_{m, n}}(y)$.  A similar discussion shows the same for points $y$ in the lower-right triangle with vertices $\{   (i/m, j/n), \left( (i+1)/m, (j+1)/n\right), \left( (i+1)/m, j/n\right) \}$.  The result follows.
\end{proof}

Next, we give a version of the usual result on simplicial approximation and homotopy suited to our purposes here. 

\begin{theorem}[Simplicial Approximation and Homotopy] \label{thm: homotopy implies contiguity}
Suppose we have simplicial maps $g, g' \colon (I_{m,n}, \partial I_{m,n}) \to (X, x_0)$ whose spatial realizations are homotopic (relative the boundary); i.e., we have  $|g| \simeq |g'| \colon (I^2, \partial I^2) \to (|X|, x_0)$.  Then for some $k$, we have a contiguity equivalence $g\circ \rho_k \simeq g'\circ \rho_k$. 
\end{theorem}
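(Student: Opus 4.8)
The plan is to run the standard ``subdivide finely enough and compare'' argument, but using the maps $\rho_k$ in place of iterated barycentric subdivision. Let $H \colon (I^2 \times [0,1], \partial I^2 \times [0,1]) \to (|X|, x_0)$ be a homotopy rel boundary with $H_0 = |g|$ and $H_1 = |g'|$. First I would apply \propref{prop: star cover} to get the open cover $\{ \mathrm{st}(x_i) \}$ of $|X|$, pull it back along $H$ to an open cover $\{ H^{-1}(\mathrm{st}(x_i)) \}$ of the compact metric space $I^2 \times [0,1]$, and extract a Lebesgue number $\delta$. The key point, just as in the proof of \thmref{thm: simp approx}, is that for $k$ large enough the stars $\mathrm{st}_{I_{km, kn}}(a)$ of vertices $a$ of the subdivided triangulation have diameter small enough that their images under $H$ (at any fixed time, and in fact over a short time interval) land inside a single $\mathrm{st}(x_i)$. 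Concretely, I would choose $k$ so that $2\sqrt{2}/(k\cdot\max(m,n))$ is smaller than $\delta$, and also chop $[0,1]$ into sufficiently many subintervals $0 = t_0 < t_1 < \cdots < t_N = 1$ so that each ``slab'' $\mathrm{st}_{I_{km,kn}}(a) \times [t_{\ell}, t_{\ell+1}]$ has diameter below $\delta$.

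Next I would build, for each $\ell = 0, \ldots, N$, a simplicial approximation $h_\ell \colon (I_{km, kn}, \partial I_{km, kn}) \to (X, x_0)$ to the slice map $H_{t_\ell} \colon (I^2, \partial I^2) \to (|X|, x_0)$, exactly as in the proof of \thmref{thm: simp approx} (sending boundary vertices to $x_0$, and each interior vertex $a$ to a choice of $x_i$ with $H_{t_\ell}(\mathrm{st}(a)) \subseteq \mathrm{st}(x_i)$). The crucial refinement is to make the choices \emph{coherently across adjacent time levels}: because $H(\mathrm{st}_{I_{km,kn}}(a) \times [t_\ell, t_{\ell+1}])$ lies in a single star $\mathrm{st}(x_i)$, we can pick the \emph{same} $x_i$ to serve as the value of both $h_\ell(a)$ and $h_{\ell+1}(a)$ at that vertex, or else argue directly that $h_\ell(a)$ and $h_{\ell+1}(a)$ both lie in the carrier of any point of $H(\{a\} \times [t_\ell, t_{\ell+1}])$. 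Either way, for each vertex $a$ and each simplex $\sigma$ of $I_{km, kn}$, the set $h_\ell(\sigma) \cup h_{\ell+1}(\sigma)$ is contained in $\mathrm{carr}\big(H(y, t_\ell)\big) \cup \mathrm{carr}\big(H(y, t_{\ell+1})\big)$ for a suitable $y$ with $\mathrm{carr}(y) = \sigma$, and this union lies in a single simplex of $X$ since the whole slab maps into one star. Hence $h_\ell \sim h_{\ell+1}$ for each $\ell$, giving a contiguity equivalence $h_0 \simeq h_1 \simeq \cdots \simeq h_N$.

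It remains to connect the two ends of this chain to $g \circ \rho_k$ and $g' \circ \rho_k$. Here I would use \lemref{lem: rho approx id}: $\rho_k$ is a simplicial approximation of $\mathrm{id}\colon I^2 \to I^2$, so $g \circ \rho_k$ is a simplicial approximation of $|g| \circ \mathrm{id} = |g| = H_{t_0}$ (one checks that a simplicial approximation composed appropriately with a simplicial approximation of the identity is again a simplicial approximation, using that carriers only shrink). Thus both $g \circ \rho_k$ and $h_0$ are simplicial approximations to the same continuous map $H_{t_0}$, so \lemref{lem: same map contig} gives $g \circ \rho_k \sim h_0$. Symmetrically $g' \circ \rho_k \sim h_N$. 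Stringing everything together,
$$g \circ \rho_k \;\sim\; h_0 \;\simeq\; h_1 \;\simeq\; \cdots \;\simeq\; h_N \;\sim\; g' \circ \rho_k,$$
which is the desired contiguity equivalence $g \circ \rho_k \simeq g' \circ \rho_k$.

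The main obstacle I anticipate is the bookkeeping in the middle step: making the vertex choices for the $h_\ell$ genuinely compatible between consecutive time levels, and verifying cleanly that ``a slab maps into a single star'' forces $h_\ell(\sigma) \cup h_{\ell+1}(\sigma)$ into one simplex of $X$ for \emph{every} simplex $\sigma$, including the $2$-simplices. This is where one must be careful that all the carriers involved — $\mathrm{carr}\big(H(y,t_\ell)\big)$, $\mathrm{carr}\big(H(y,t_{\ell+1})\big)$, and the various $\mathrm{carr}\big(|h_\ell|(y)\big)$ — sit inside a common star and hence inside a common simplex of $X$. The auxiliary claim that $g\circ\rho_k$ approximates $|g|$ is routine (diameters of stars in $I_{km,kn}$ are controlled, and $|\rho_k|$ never leaves the carrier), but it should be stated explicitly since it is the linchpin tying the combinatorial chain back to the original maps.
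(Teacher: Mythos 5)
Your overall architecture is exactly the paper's: subdivide the time interval using a Lebesgue-number/uniform-continuity argument, build a chain of simplicial approximations on a common fine triangulation $I_{km,kn}$, and tie the two ends to $g\circ\rho_k$ and $g'\circ\rho_k$ by observing that $\rho_k$ approximates the identity (\lemref{lem: rho approx id}), so $g\circ\rho_k$ approximates $|g|$, and then invoking \lemref{lem: same map contig}. That endpoint step is correct and is precisely how the paper concludes. The gap is in the middle step, and it is the one you yourself flag. Neither of your two alternatives works as stated. The direct argument rests on the claim that because the slab $\mathrm{st}_{I_{km,kn}}(a)\times[t_\ell,t_{\ell+1}]$ maps into a single star $\mathrm{st}(x_i)$, the set $\mathrm{carr}\bigl(H(y,t_\ell)\bigr)\cup\mathrm{carr}\bigl(H(y,t_{\ell+1})\bigr)$ lies in one simplex of $X$. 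This is false in general: two points of a common open star have carriers that share the central vertex, but the union of those carriers need not be a simplex (in the octahedron of \exref{ex: sphere S2}, interior points of the faces $\{\be_1,\be_2,\be_3\}$ and $\{\be_1,\be_2,-\be_3\}$ both lie in $\mathrm{st}(\be_1)$, yet the union of the two carriers is not a simplex). Your other alternative, choosing the \emph{same} $x_i$ for $h_\ell(a)$ and $h_{\ell+1}(a)$, over-constrains: $h_{\ell+1}(a)$ would then have to serve both the slab $[t_\ell,t_{\ell+1}]$ and the slab $[t_{\ell+1},t_{\ell+2}]$, and no single star need work for both.

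The missing device is the paper's: for each slab, since $\{H(y,t_\ell),H(y,t_{\ell+1})\}$ lies in a common star for every $y$, the sets $f_\ell^{-1}\bigl(\mathrm{st}(x_j)\bigr)\cap f_{\ell+1}^{-1}\bigl(\mathrm{st}(x_j)\bigr)$ form an open cover of $I^2$, and one builds a single map $g_{\ell+1}$ that is \emph{simultaneously} a simplicial approximation to both slice maps $f_\ell$ and $f_{\ell+1}$. Then consecutive maps $g_\ell$ and $g_{\ell+1}$ in the resulting chain are both approximations to the \emph{same} continuous map $f_\ell$, so \lemref{lem: same map contig} applies verbatim: $g_\ell(\sigma)\cup g_{\ell+1}(\sigma)$ is contained in the single carrier $\mathrm{carr}\bigl(f_\ell(y)\bigr)$, not merely in a common star. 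With that replacement (one common approximation per slab rather than one approximation per time level), your chain $g\circ\rho_k\sim g_1\sim\cdots\sim g_N\sim g'\circ\rho_k$ goes through and the proof coincides with the paper's.
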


\begin{proof}
Let $H \colon I^2 \times I \to |X|$ be the (continuous) homotopy (relative the boundary) from $|g|$ to $|g'|$.  Because $I^2 \times I$ is a compact metric space, $H$ is uniformly continuous.  This means that, given any $\epsilon > 0$, there is some $\eta >0$ for which $H\big( B\big( (y, t), \eta \big) \big) \subseteq B\big( H(y, t), \epsilon \big)$ for any point $(y,t) \in I^2 \times I$.  Here, the notation $B( p, r)$ denotes the (open) ball of radius $r$ centered at point $p$ in some metric space.   So, in particular, for any $\epsilon$ there is an $\eta$ such that, if $|t-t'| < \eta$ for $t, t' \in I$, then $H(y, t') \in B\big( H(y, t), \epsilon \big)$ \emph{for all} $y \in I^2$.   

So, let $\epsilon$ be a Lebesgue number for the open covering of $|X|$ that consists of stars of vertices of $X$.  (We know this is an open cover by Proposition~\ref{prop: star cover}.)   Let $\eta$ be such that $|t-t'| < \eta$ implies $H(y, t') \in B\big( H(y, t), \epsilon \big)$ for all $y \in I^2$.  Choose an $n$ such that $1/n < \eta$ and subdivide $I$ into $n$ subintervals of width $1/n$.  Set $t_i = i/n$ for $i= 0, \ldots, n$, and define $f_i(y) = H(y, t_i)$, so that $f_0 = |g|$ and $f_n = |g'|$.      Then for consecutive $f_i, f_{i+1}$ and any $y \in I^2$, we have the pair $\{ f_i(y), f_{i+1}(y) \}$ contained in a ball of radius $\epsilon$ and thus both in $\mathrm{st}(x_j)$ for some vertex $x_j \in X$.  It follows that the sets $f_i^{-1}\big( \mathrm{st}(x_j)\big) \cap f_{i+1}^{-1}\big( \mathrm{st}(x_j)\big)$ form an open cover of $I^2$, for each $i = 0, \ldots, n-1$.    

As in the proof of Theorem~\ref{thm: simp approx}, suppose that $\delta_i$ is a Lebesgue number for the open covering of $I^2$ by the sets $f_i^{-1}\big( \mathrm{st}(x_j)\big) \cap f_{i+1}^{-1}\big( \mathrm{st}(x_j)\big)$. Set 
$$\delta = \mathrm{min} \{ \delta_0, \ldots, \delta_{n-1} \}.$$
 Now let $k$ be large enough such that, for each vertex $a$ of $I_{km, kn}$, the star $\mathrm{st}(a) \subseteq I_{km, kn}$ has diameter less than  $\delta$.  Specifically, we can choose any $k$ for which
 $$2 \sqrt{ \frac{1}{k^2m^2} + \frac{1}{k^2n^2} } < \delta.$$
 Then, for each $i=0, \ldots, n-1$,  define a common simplicial approximation 
 $$g_{i+1} \colon (I_{km, kn}, \partial I_{km, kn}) \to (X, x_0)$$
 to both $f_i$ and $f_{i+1}$ by setting $g_{i+1}(a) = x_0$ for any vertex $a$ of $I_{km,kn}$ on the boundary and setting $g_{i+1}(a) = x_j$ for each non-boundary vertex $a$ of $I_{km,kn}$ with $x_j$ a suitable vertex of $X$ for which $\mathrm{st}(a) \subseteq  f_i^{-1}\big( \mathrm{st}(x_j)\big) \cap f_{i+1}^{-1}\big( \mathrm{st}(x_j)\big)$.  As in Theorem~\ref{thm: simp approx}, we have that $g_{i+1}$ is a simplicial approximation to both $f_i$ and $f_{i+1}$.

We have $g_1$ is a simplicial approximation to $|g|=f_0$ (as well as to $f_1$) and $g_{n}$  is a simplicial approximation to $|g'|=f_n$ (as well as to $f_{n-1}$).  Furthermore, each pair $\{ g_j, g_{j+1} \}$ is a pair of simplicial approximations to the same continuous map $f_{j}$, for $j = 1, \ldots, n-1$.  Thus, by \lemref{lem: same map contig}, we have contiguities $g_j \sim g_{j+1}$  for $j = 1, \ldots, n-1$. 

Finally, observe that $g \circ \rho_k \colon (I_{km, kn}, \partial I_{km, kn}) \to (X, x_0)$ is also a simplicial approximation to $|g|$.  This follows since $g$ is a simplicial approximation to $|g|$ (Remark~\ref{rem: simplicial approximation to self}), $\rho_k$ is a simplicial approximation to $\mathrm{id}$ (Lemma~\ref{lem: rho approx id}), and the composition $g \circ \rho_k$ is thus a simplicial approximation of the composition $|g|\circ \mathrm{id} = |g|$ (see \cite[Cor.22.6]{Bre}, for example).  Likewise, we have $g' \circ \rho_k \colon (I_{km, kn}, \partial I_{km, kn}) \to (X, x_0)$ is a simplicial approximation to $|g'|$. 
Applying \lemref{lem: same map contig} to these last two facts, we now have a sequence of contiguities
$$g \circ \rho_k \sim g_1 \sim \cdots \sim g_n \sim g' \circ \rho_k,$$
whence the continguity equivalence $g \circ \rho_k \simeq  g'\circ \rho_k$.
\end{proof}

\section{Bridging Between $I_{m, n}$ and $I_m \times I_n$}\label{sec: Imn vs. ImxIn}

In the previous section, we showed several results that relate simplicial maps of the form $(I_{m, n}, \partial I_{m,n}) \to (X, x_0)$ and continuous maps of the form $(I^2, \partial I^2) \to (|X|, x_0)$.  In this section, we develop some technical results that relate simplicial maps of the form $\left( I_m \times I_n, \partial(I_m \times I_n) \right) \to (X, x_0)$ and those of the form  $(I_{m, n}, \partial I_{m,n}) \to (X, x_0)$. With these results, we will be able to apply the results of the previous section to show our main result (\thmref{thm: Face group iso homotopy group}).

In our development, we use the simplicial map
$$E\colon I_{m, n} \to I_m \times I_n$$
induced by the vertex map 
$$ \left( \frac{i}{m}, \frac{j}{n} \right) \mapsto (i, j)$$
for each $m, n$ and for each $i, j$ with $0 \leq i \leq m$ and $0 \leq j \leq n$.  Notice that $E^{-1}$, this correspondence of vertices in the other direction, does not give a simplicial map: $\{ (0, 1/m), (1/n, 0) \}$ is not an edge in $I_{m, n}$. 

\begin{lemma}\label{lem: switching E}
With $\alpha_m^{m(k-1)} \colon I_{km} \to I_m$  and $\alpha_n^{n(k-1)} \colon I_{kn} \to I_n$ defined as in \defref{def: extensions}, and $\rho_k \colon I_{km, kn} \to I_{m,n}$ as defined in \defref{def: subdivision}, we have a contiguity equivalence
$$E \circ \rho_k \simeq (\alpha_m^{m(k-1)} \times \alpha_n^{n(k-1)}) \circ E \colon (  I_{km, kn}, \partial  I_{km, kn}) \to \left( I_m \times I_n, \partial(I_m \times I_n) \right).$$
\end{lemma}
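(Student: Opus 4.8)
The plan is to recast both maps in the statement in the form $(\beta_1\times\beta_2)\circ E$ for simplicial maps $\beta_1\colon I_{km}\to I_m$ and $\beta_2\colon I_{kn}\to I_n$ and the switching map $E\colon I_{km,kn}\to I_{km}\times I_{kn}$, thereby reducing to a one–dimensional statement. Unwinding vertex maps, one checks that $E\circ\rho_k=(\rho_k\times\rho_k)\circ E$, where on the right $\rho_k$ denotes the one–dimensional subdivision map $\rho_k\colon I_{km}\to I_m$, $a\mapsto\lfloor a/k\rfloor$, and $E$ is the switching map $I_{km,kn}\to I_{km}\times I_{kn}$; meanwhile $(\alpha_m^{m(k-1)}\times\alpha_n^{n(k-1)})\circ E$ is already of this form, with $\alpha_m^{m(k-1)}(a)=\min(a,m)$. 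So it suffices to produce contiguity equivalences $\rho_k\simeq\alpha_m^{m(k-1)}\colon I_{km}\to I_m$ and $\rho_k\simeq\alpha_n^{n(k-1)}\colon I_{kn}\to I_n$ in which every intermediate map carries $\{0,km\}$ (resp.\ $\{0,kn\}$) into $\{0,m\}$ (resp.\ $\{0,n\}$). Then \propref{prop: contiguity results1}(b) combines these into $\rho_k\times\rho_k\simeq\alpha_m^{m(k-1)}\times\alpha_n^{n(k-1)}\colon I_{km}\times I_{kn}\to I_m\times I_n$, and \propref{prop: contiguity results1}(a) precomposes this with the (fixed) map $E$; since each map occurring in the two one–dimensional equivalences fixes the first vertex and sends the last vertex to the last vertex, every map in the resulting chain carries $\partial I_{km,kn}$ into $\partial(I_m\times I_n)$, so the chain is an equivalence through maps of pairs, as wanted.

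For the one–dimensional comparison, both $\rho_k$ and $\alpha_m^{m(k-1)}$ are monotone surjective simplicial maps $I_{km}\to I_m$, so each is a composite $\alpha_I=\alpha_{i_1}\circ\cdots\circ\alpha_{i_r}$ of the elementary maps of \defref{def: extensions} with $r=km-m=m(k-1)$. For $\alpha_m^{m(k-1)}$ this is the defining expression, with index sequence $(m,m,\dots,m)$. For $\rho_k$ one uses the sequence that repeats the vertex $m-1$ of $I_m$ a total of $k-1$ times, then the vertex $m-2$ a total of $k-1$ times, and so on down to the vertex $0$; a direct check on the associated paths shows this composite has vertex map $a\mapsto\lfloor a/k\rfloor$, and since every index used is at most $m-1$ the sequence is admissible. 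Now one replaces the elementary factors one at a time: $\alpha_{i_t}$ and the corresponding target factor are both elementary maps $I_{m+t}\to I_{m+t-1}$, hence contiguity equivalent by \propref{prop: contiguity results2}(a), and \propref{prop: contiguity results1}(a) propagates each single replacement through the surrounding composite. This yields $\alpha_I\simeq\alpha_J$, i.e.\ $\rho_k\simeq\alpha_m^{m(k-1)}$, and identically in the $n$–coordinate. Assembling the pieces as in the first paragraph completes the proof.

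The main point to watch — and essentially the only real work — is the maps-of-pairs condition, which is what dictates the whole strategy. A composite of elementary maps $\alpha_i$ always fixes $0$ and sends the top vertex to the top vertex, so a product $\beta_1\times\beta_2$ of two such composites carries $\partial(I_{km}\times I_{kn})=\{0,km\}\times I_{kn}\cup I_{km}\times\{0,kn\}$ into $\partial(I_m\times I_n)$, and precomposition with $E$, which carries $\partial I_{km,kn}$ into $\partial(I_{km}\times I_{kn})$, then lands us in $\partial(I_m\times I_n)$; routing everything through composites of the $\alpha_i$ is exactly what makes this automatic. Besides this, the only care needed is the (routine) verification that the index sequence exhibiting $\rho_k$ as $\alpha_I$ is admissible and genuinely computes $a\mapsto\lfloor a/k\rfloor$, a short induction on paths. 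Note that a direct appeal to \lemref{lem: same map contig} is not available here: $E\circ\rho_k$ and $(\alpha_m^{m(k-1)}\times\alpha_n^{n(k-1)})\circ E$ are typically not even contiguous, so they cannot both be simplicial approximations of a single continuous map; this is why the argument is combinatorial rather than a reuse of the simplicial–approximation machinery of \secref{sec: simp approx}.
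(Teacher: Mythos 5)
Your proposal is correct and follows essentially the same route as the paper's proof: both commute $E$ past $\rho_k$ to rewrite $E\circ\rho_k$ as a product of interval maps precomposed with $E$, factor that product into elementary maps $\alpha_i$, and then invoke \propref{prop: contiguity results2}(a) together with \propref{prop: contiguity results1}(a),(b) to exchange indices and land on $\alpha_m^{m(k-1)}\times\alpha_n^{n(k-1)}$. The only differences are cosmetic --- you factor the one-dimensional $\rho_k$ using the index sequence $m-1,\dots,0$ (each $k-1$ times) where the paper uses $0,k,\dots,(m-1)k$, and you are more explicit than the paper about the chain staying within maps of pairs --- neither of which changes the argument.
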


\begin{proof}
As in \defref{def: subdivision}, write a typical point in $I_{km, kn}$ in the form
$$\left( \frac{ki+r}{km}, \frac{kj+s}{kn}\right)$$
with $0 \leq i \leq m$, $0 \leq j \leq n$ and $0 \leq r, s < k$. Then we have 
$$E\circ \rho_k\left( (ki+r)/km, (kj+s)/kn \right) = E(i/m, j/n) =(i, j).$$
Now, for $E\colon I_{km, kn} \to I_{km}\times I_{kn}$, we have 
$$E\left( (ki+r)/km, (kj+s)/kn \right) = (ik+r, jk+s).$$
We claim that the composition
$$\alpha_0^{k-1} \circ \alpha_k^{k-1} \circ \cdots \circ \alpha_{(m-1)k}^{k-1} \colon I_{km} \to I_m$$
maps $(ik+r)$ to $i$. 
To see this, note that we have $\alpha_{(i+1)k}^{k-1} \circ \cdots \circ \alpha_{(m-1)k}^{k-1}(ik +r) = ik+r$, since $ik + r< (i+1)k$.
Then, we have $\alpha_{ik}(ik+r) = ik+r-1$,  $\alpha^2_{ik}(ik+r) = ik+r-2$ and so-on, until we have $\alpha^r_{ik}(ik+r) = ik$, after which we have 
$$\alpha^{r+1}_{ik}(ik+r) = \cdots = \alpha^{k-1}_{ik}(ik+r) = ik.$$ 
Finally, we have $\alpha^{k-1}_{(i-1)k}(ik) = (i-1)k+1$,   $\alpha^{k-1}_{(i-2)k}((i-1)k + 1) = (i-2)k+2$, and so-on, until we arrive at 
$$\alpha_0^{k-1} \circ \alpha_k^{k-1} \circ \cdots \circ \alpha_{(i-1)k}^{k-1} (ik) = i.$$
This shows the claim. Likewise, we may show that the composition 
$$\alpha_0^{k-1} \circ \alpha_k^{k-1} \circ \cdots \circ \alpha_{(n-1)k}^{k-1} \colon I_{kn} \to I_n$$
maps $(jk+s)$ to $j$.  It follows that we have agreement of maps
$$E \circ \rho_k = \left( (\alpha_0^{k-1} \circ \cdots \circ \alpha_{(m-1)k}^{k-1})  \times 
( \alpha_0^{k-1} \circ \cdots \circ \alpha_{(m-1)k}^{k-1}) \right)  \circ E \colon  I_{km, kn} \to I_m \times I_n.$$
But we have a contiguity equivalence 
$$\alpha_0^{k-1} \circ \cdots \circ \alpha_{(m-1)k}^{k-1} \simeq  \alpha_m^{k-1} \circ \cdots \circ \alpha_{m}^{k-1} = \alpha_{m}^{m(k-1)} \colon I_{km} \to I_m,$$
from \propref{prop: contiguity results2} and part (a) of \propref{prop: contiguity results1}. Similarly, we have a contiguity equivalence
$$\alpha_0^{k-1} \circ \cdots \circ \alpha_{(n-1)k}^{k-1} \simeq \alpha_{n}^{n(k-1)} \colon I_{kn} \to I_n.$$
Then part (b) of \propref{prop: contiguity results1} gives a contiguity equivalence 
$$\left( (\alpha_0^{k-1} \circ \cdots \circ \alpha_{(m-1)k}^{k-1})  \times 
( \alpha_0^{k-1} \circ \cdots \circ \alpha_{(m-1)k}^{k-1}) \right)  \circ E \simeq (\alpha_m^{m(k-1)} \times \alpha_n^{n(k-1)}) \circ E$$
and the result follows. 
\end{proof}

Via pre-composition with $E$, a simplicial map $I_m \times I_n \to X$ induces one $I_{m, n} \to X$.  But we will want to operate with maps $I_{m, n} \to X$ whether or not they are induced from maps $I_m \times I_n \to X$.  In this regard, the following device is something of a complement to the subdivision maps $\rho_k$ used above.   

\begin{definition}\label{def: gamma2}
For each $m, n$,  define a simplicial map $\gamma \colon I_{2m+1, 2n+1} \to I_{m, n}$ as the simplicial map induced by the vertex map
\[ \gamma\left(  (\frac{2k + \epsilon_1}{2m+1}, \frac{2l+ \epsilon_2}{2n+1}) \right)   = (\frac{k}{m}, \frac{l}{n}),\]
for all $0 \leq k \leq m$, $0 \leq l \leq n$ and $(\epsilon_1, \epsilon_2) \in \{ (0, 0), (1, 0), 0,1), (1,1) \}$. 
\end{definition}

As with the maps $\alpha_i$ from \secref{sec: extn contiguity}, pre-composing a map $g \colon I_{m, n} \to X$ with this $\gamma$ produces a map
$$g\circ \gamma \colon I_{2m+1, 2n+1} \to X,$$
whose values are represented array style by doubling every row and every column of values of $g$ (and re-sizing them so as to fit in the unit square).  Note that $I_{2m+1, 2n+1}$ is not a subdivision of $I_{m, n}$ as the vertices of $I_{m, n}$ are generally not included in those of $I_{2m+1, 2n+1}$.

\begin{lemma}\label{lem: g-gamma and g homotopic}
Let $g\colon \left( I_{m, n}, \partial I_{m, n} \right) \to (X, x_0)$ be  a simplicial map, for any $m, n$.  With $\gamma \colon I_{2m+1, 2n+1} \to I_{m, n}$ as in \defref{def: gamma2}, we have a homotopy of continuous maps
$$|g\circ \gamma| \simeq |g| \colon (I^2, \partial I^2) \to (|X|, x_0)$$
relative the boundary $\partial I^2$.
\end{lemma}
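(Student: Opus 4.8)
The plan is to realize the homotopy explicitly as a straight-line homotopy carried out in the target square $I^2$ and then pushed forward by $|g|$. First I would reduce to a statement about $|\gamma|$ alone: spatial realization is functorial, so $|g\circ\gamma| = |g|\circ|\gamma| \colon I^2 \to |X|$, and it therefore suffices to produce a homotopy $H\colon I^2\times I \to |X|$ from $|g|\circ|\gamma|$ to $|g|$ that is stationary (at $x_0$) on $\partial I^2\times I$. The idea is that, although $I_{2m+1,2n+1}$ is not a subdivision of $I_{m,n}$, the map $|\gamma|\colon I^2\to I^2$ is a piecewise-linear self-map of the square that collapses certain strips of cells but moves no point out of $I^2$ and, crucially, carries each edge of $\partial I^2$ into itself; hence the straight-line homotopy from $|\gamma|$ to $\mathrm{id}_{I^2}$ stays inside $I^2$ and keeps $\partial I^2$ inside $\partial I^2$.

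Next I would verify the boundary claim. From the vertex formula in \defref{def: gamma2}, a vertex of $I_{2m+1,2n+1}$ on the left edge (first coordinate $0$) forces $k=0$ and $\epsilon_1=0$, so its $\gamma$-image has first coordinate $0$; a vertex on the right edge (first coordinate $1$) forces $k=m$ and $\epsilon_1=1$, so its image has first coordinate $1$; and symmetrically for the bottom and top edges in the second coordinate. Since $|\gamma|$ is affine on each simplex of the triangulation and each boundary edge of $I^2$ is a union of $1$-simplices of $I_{2m+1,2n+1}$ whose vertices all share one extreme coordinate, $|\gamma|$ maps each closed edge of $\partial I^2$ into that same edge; in particular $|\gamma|(\partial I^2)\subseteq\partial I^2$.

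Then I would define $K\colon I^2\times I \to I^2$ by $K(y,t) = (1-t)\,|\gamma|(y) + t\,y$. Because $I^2$ is convex and $|\gamma|(y),y\in I^2$, this is well-defined and continuous, with $K(-,0)=|\gamma|$ and $K(-,1)=\mathrm{id}_{I^2}$; and if $y$ lies on one of the four edges of $\partial I^2$ then, by the previous paragraph, $|\gamma|(y)$ lies on that same edge, so the convex combination $K(y,t)$ does too, giving $K(\partial I^2\times I)\subseteq\partial I^2$. Finally I would set $H = |g|\circ K$. This is continuous, $H(-,0)=|g|\circ|\gamma| = |g\circ\gamma|$ and $H(-,1)=|g|$; and for $y\in\partial I^2$ we have $K(y,t)\in\partial I^2 = |\partial I_{m,n}|$, on which $|g|$ is constant at $x_0$ since $g$ is a map of pairs $(I_{m,n},\partial I_{m,n})\to(X,x_0)$. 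Thus $H$ is stationary at $x_0$ on $\partial I^2\times I$, which is the required homotopy relative $\partial I^2$.

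The only step that requires genuine care — and the main potential obstacle — is the boundary analysis of the second paragraph: one must know that $|\gamma|$ sends each face of the square \emph{into that same face}, not merely into $\partial I^2$, since that is exactly what makes the straight-line homotopy $K$ preserve $\partial I^2$ (a convex combination of two points on one face of $I^2$ stays on that face, but this fails if the two points lie on different faces). Everything else — functoriality of $|{-}|$, convexity of $I^2$, affineness of realizations on simplices, and constancy of $|g|$ on $|\partial I_{m,n}|$ — is routine.
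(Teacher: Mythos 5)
Your proposal is correct and follows essentially the same route as the paper's proof: reduce to $|\gamma|$ via functoriality of realization, use the straight-line homotopy $(1-t)|\gamma|(y)+ty$ in the convex square, check edge-by-edge that $|\gamma|$ carries each face of $\partial I^2$ into itself so the homotopy preserves the boundary, and compose with $|g|$. The only difference is that you spell out the boundary verification from the vertex formula in \defref{def: gamma2}, which the paper leaves as an easy check.
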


\begin{proof}
First consider the continuous map $|\gamma| \colon I^2 \to I^2$.  Because $I^2$ is a convex subset of the plane, we may use the straight-line homotopy $H \colon I^2 \times I \to I^2$, defined for $(y, t) \in I^2 \times I$ by
$$H(y, t) = (1-t) |\gamma | (y) + ty,$$
to obtain a homotopy from $|\gamma|$ to $\mathrm{id}$.   Note that $H$ preserves the boundary, in the sense that we have $H( \partial I^2, t) \subseteq \partial I^2$ for each $t \in I$. 
This fact follows easily by considering each edge of $\partial I^2$ separately.  For instance, if $y \in I \times \{ 0 \}$ is a point on the bottom edge of the square, we have $|\gamma|(y) \in I \times \{ 0 \}$ and hence $H(y, t) \in I \times \{ 0 \}$.     
Then $|g|\circ H\colon I^2 \times I \to I^2$ gives a homotopy from $|g|\circ |\gamma| = |g \circ \gamma|$ to $|g|$ that is stationary on the boundary $\partial I^2$, since $|g|$ maps the boundary to $x_0$.  
\end{proof}

Note that a  simplicial map $f \colon I_{m, n} \to X$  generally does not map each pair of vertices $\{ (k/m, l/n), ((k+1)/m, (l-1)/n)  \} \in I_{m, n}$ to a simplex in $X$.  This fact means that not
all such maps are induced by a simplicial map $I_{m} \times I_{n} \to X$ with pre-composition by $E \colon I_{m, n} \to I_{m} \times I_{n}$.  
To work around this issue, we give the following construction which will be used in identifying $F(X, x_0)$ with $\pi_2( |X|, x_0)$ in the next section. 

For a simplicial map $f\colon (I_{m, n}, \partial I_{m, n}) \to (X, x_0)$, first use the map $\gamma \colon I_{2m+1, 2n+1} \to I_{m, n}$ of \defref{def: gamma2} to pass to the map
$$f\circ \gamma\colon(I_{2m+1, 2n+1}, \partial I_{2m+1, 2n+1}) \to (X, x_0).$$
Next, define a map $D_f $ on the vertices of $I_{2m+1} \times I_{2n+1}$ as
$$D_f(i, j) = \begin{cases} f\circ \gamma(\dfrac{i}{2m+1}, \dfrac{j-1}{2n+1}) & \text{if $i = 2k+1$ and $j = 2l$}, \\
&\text{ for $1 \leq k \leq m-2$ and $2 \leq l \leq n-1$}\\
f\circ \gamma(\dfrac{i}{2m+1}, \dfrac{j}{2n+1}) & \text{otherwise.}
\end{cases}$$
We remarked above that the vertex map $E^{-1}$ of the simplicial map $E \colon I_{2m+1, 2n+1} \to I_{2m+1}\times I_{2n+1}$ is not a simplicial map.  The definition of $D_f$ here is an adjustment of the vertex map $f\circ \gamma\circ E^{-1}$ at certain isolated (from each other) vertices so that it becomes a simplicial map.  Let us use $\square_{k, l}$ to denote the simplex of  $I_{2m+1}\times I_{2n+1}$
$$\square_{k, l} := \left\{ ( 2k+ \epsilon_1, 2l+ \epsilon_2 ) \mid 0 \leq \epsilon_1, \epsilon_2 \leq 1 \right\},$$
with $0 \leq k \leq m$ and $0 \leq l \leq n$.  Then $D_f$ agrees with the vertex map $f\circ \gamma\circ E^{-1}$ except at  the lower-right corner of each $\square_{k, l}$, for $1 \leq k \leq m-2$ and $2 \leq l \leq n-1$, at which we use instead the value of $f\circ \gamma\circ E^{-1}$ at the upper-right corner of $\square_{k, l-1}$.  Note that we have $f\circ \gamma\circ E^{-1} (\square_{k, l}) = x_0$ for all $\square_{k, l}$ with $k=0, m$ or $l = 0, n$ (the simplices around the boundary of $I_{2m+1}\times I_{2n+1}$) and it is not necessary to adjust the values of $f\circ \gamma\circ E^{-1}$ at the lower-right corners of these squares or of any square whose lower-right corner is next to one of these, to obtain a simplicial map. 
\begin{proposition}\label{prop: Df simplicial}
The vertex map $D_f$ defined above extends to a simplicial map
$$D_f\colon \left(I_{2m+1} \times I_{2n+1}, \partial (I_{2m+1} \times I_{2n+1}) \right) \to (X, x_0).$$
\end{proposition}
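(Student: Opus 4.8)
The plan is to verify directly that the vertex map $D_f$ respects simplices and carries the boundary to $x_0$. Every simplex of $I_{2m+1}\times I_{2n+1}$ is a face of a $3$-simplex of the shape $\square = \{(i,j),(i+1,j),(i,j+1),(i+1,j+1)\}$, and if a vertex map sends every such $\square$ to a simplex of $X$ then (since a subset of the image of $\square$ is a subset of a simplex of $X$) it is simplicial; this is the same reduction used for contiguity checks in \secref{sec: extn contiguity}. So it suffices to show $D_f(\square)$ is a simplex of $X$ for each $\square$. The fact that $D_f$ is a map of pairs will then be immediate: no vertex of $\partial(I_{2m+1}\times I_{2n+1})$ has the form $(2k+1,2l)$ with $1\le k\le m-2$ and $2\le l\le n-1$, so on the boundary $D_f$ agrees with $f\circ\gamma\circ E^{-1}$, which already sends boundary vertices to $x_0$ because $f$ carries $\partial I_{m,n}$ to $x_0$.

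For bookkeeping I would write $a_{k,l} := f(k/m,l/n)\in V(X)$, so that, since $f\circ\gamma$ is constant on each block $\square_{k,l}$, the vertex map $f\circ\gamma\circ E^{-1}$ is $(i,j)\mapsto a_{\lfloor i/2\rfloor,\,\lfloor j/2\rfloor}$, and $D_f$ agrees with it except at the adjusted vertices $(2k+1,2l)$, $1\le k\le m-2$, $2\le l\le n-1$, where $D_f(2k+1,2l)=a_{k,l-1}$. From the hypotheses on $f$ I record: (1) $a_{k,l}=x_0$ whenever $k\in\{0,m\}$ or $l\in\{0,n\}$; and (2) for all $0\le k\le m-1$ and $0\le l\le n-1$, both $\{a_{k,l},a_{k,l+1},a_{k+1,l+1}\}$ and $\{a_{k,l},a_{k+1,l},a_{k+1,l+1}\}$ — being the images under $f$ of the two triangles of the $(k,l)$-square of $I_{m,n}$ — as well as all their subsets, are simplices of $X$.

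The argument then splits according to the parities of $i$ and $j$ in $\square$. Exactly one corner of $\square$ has odd first coordinate and even second coordinate, so at most one adjusted vertex occurs in a given $\square$. When $i,j$ are not both odd, the corner values of $f\circ\gamma\circ E^{-1}$ already collapse to a single vertex or to a horizontal or vertical edge of $I_{m,n}$, and incorporating the possible adjustment at worst turns this into one of the triangles of (2); each is a simplex of $X$. I expect the crux to be the case $i,j$ both odd: here $\square$ straddles four neighbouring blocks and $f\circ\gamma\circ E^{-1}$ would assign the four corner values $a_{k,l},a_{k+1,l},a_{k,l+1},a_{k+1,l+1}$, i.e. all four vertices of a square of $I_{m,n}$ — exactly the configuration witnessing the failure of $E^{-1}$ to be simplicial. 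The adjusted vertex of this $\square$ is its top-left corner, at which $D_f$ takes the value $a_{k,l}$ in place of $a_{k,l+1}$, so the value set becomes $\{a_{k,l},a_{k+1,l},a_{k+1,l+1}\}$, one of the triangles of (2). For the $\square$ lying near the boundary — precisely those with $k\in\{0,m-1\}$ or $l\in\{0,n-1\}$, which are exactly the ones where no adjustment is made — fact (1) forces one pair of opposite corner values to equal $x_0$, collapsing the value set again onto one of the triangles of (2). Assembling the cases gives that $D_f(\square)$ is always a simplex of $X$, and the proposition follows.
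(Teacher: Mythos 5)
Your proof is correct and takes essentially the same route as the paper's: a direct verification that the vertex map preserves simplices, reducing to the maximal $3$-simplices, observing that the only obstruction is the anti-diagonal value pair $\{a_{k,l+1},a_{k+1,l}\}$ arising in the both-odd squares, and checking that the adjustment (or, near the boundary, the forced $x_0$ values) collapses each value set onto the image of a simplex of $I_{m,n}$. One cosmetic slip: in the boundary case the two corners forced to $x_0$ are adjacent (they form an edge of $\square$), not opposite, but the collapse onto one of the triangles of your fact (2) goes through exactly as you state.
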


\begin{proof}
The only way in which $f\circ \gamma\circ E^{-1}$ fails to preserve simplices is on pairs of vertices $\{ (2k+1, 2l), (2k+2, 2l-1) \}$ for $k$ and $l$ in the ranges on which we will adjust the values to those of $D_f$.  For instance, if $k = 0$.we have  $f\circ \gamma\circ E^{-1}(1, 2l) = f(0, l) = x_0$ and $f\circ \gamma\circ E^{-1}(2, 2l-1) = f(1/m, (l-1)/n)$.  But $x_0 =  f(0, (l-1)/n)$ and $\{ f(0, (l-1)/n), f(1/m, (l-1)/n)\}$ must be a simplex in $X$.  Similarly, for all the values of $k$ and $l$ excluded from the range in which we adjust the values of  $f\circ \gamma\circ E^{-1}$ in the definition of $D_f$, the vertex map $f\circ \gamma\circ E^{-1}$ preserves simplices. 

Now consider the way in which the values of $f\circ \gamma\circ E^{-1}$ change to those of $D_f$ around the particular pair of vertices $\{ (2k+1, 2l), (2k+2, 2l-1) \}$.  These values, before and after, are illustrated in Figures~\ref{fig: f circ rho2  non-digital} and \ref{fig: Df digital}.  In these figures, black segments indicate where the maps are simplicial maps because $f$ is a simplicial map.   Green segments indicate where the maps are simplicial because of the doubling of values.  For instance,  $f\circ \gamma\circ E^{-1}$ maps each simplex $\square_{k, l}$ to a simplex of $X$.  Circled vertices are those on which the value of the map will change from that of $f\circ \gamma\circ E^{-1}$  to that of $D_f$.  The one red-dashed segment indicates the pair of vertices not-necessarily mapped to a simplex of $X$ by $f\circ \gamma\circ E^{-1}$.   In Figure~\ref{fig: Df digital}, the black and green coloring scheme is the same.  Circled vertices are those on which the value of the map has now changed from $f\circ \gamma\circ E^{-1}$ to that of $D_f$.    The various assertions contained in the labeling of these figures are easily checked directly.
\end{proof}

\begin{lemma}\label{lem: D constant}
Let $c_{x_0} \colon I_{m, n} \to X$ be the constant map at $x_0$.  Then $D_{c_{x_0}} \colon I_{2m+1} \times I_{2n+1} \to X$ is the constant map at $x_0$.
\end{lemma}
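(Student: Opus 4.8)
The plan is to unwind the definition of $D_f$ directly in the special case $f = c_{x_0}$, where the claim reduces to a one-line check. First I would note that precomposition with the simplicial map $\gamma \colon I_{2m+1, 2n+1} \to I_{m, n}$ of \defref{def: gamma2} carries the constant map $c_{x_0} \colon I_{m,n} \to X$ to the constant map on $I_{2m+1, 2n+1}$: for every vertex $y$ of $I_{2m+1, 2n+1}$ we have $c_{x_0} \circ \gamma(y) = c_{x_0}(\gamma(y)) = x_0$.

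Next I would read off the two branches of the piecewise formula defining $D_{c_{x_0}}$. In the first branch (the ``adjusted'' vertices, with $i = 2k+1$, $j = 2l$ in the stated ranges) one has $D_{c_{x_0}}(i,j) = c_{x_0} \circ \gamma\big(\tfrac{i}{2m+1}, \tfrac{j-1}{2n+1}\big) = x_0$; in the second branch one has $D_{c_{x_0}}(i,j) = c_{x_0} \circ \gamma\big(\tfrac{i}{2m+1}, \tfrac{j}{2n+1}\big) = x_0$. These two cases cover all vertices $(i,j)$ of $I_{2m+1} \times I_{2n+1}$, so the vertex map $D_{c_{x_0}}$ takes the value $x_0$ everywhere, and therefore $D_{c_{x_0}}$ is the constant map at $x_0$. (That it is simplicial is automatic, consistent with \propref{prop: Df simplicial}.)

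There is no real obstacle here: the substance of the lemma is just that the isolated adjustments of the vertex map $f \circ \gamma \circ E^{-1}$ built into the definition of $D_f$ have no effect when every value of $f$ is already the basepoint. The only point worth verifying is the one noted above, namely that both branches of the definition of $D_f$ are evaluations of $f \circ \gamma$ (at possibly different vertices), which is immediate from the displayed formula, so that constancy of $c_{x_0} \circ \gamma$ propagates to constancy of $D_{c_{x_0}}$.
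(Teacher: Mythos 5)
Your proof is correct and matches the paper's, which simply states that the lemma ``follows directly from the definitions''; you have just made explicit the key observation that both branches of the definition of $D_f$ are evaluations of $f\circ\gamma$, so constancy of $c_{x_0}$ propagates immediately.
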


\begin{proof}
This follows directly from the definitions.
\end{proof}

\begin{figure}[h!]
\[
\begin{tikzpicture}[scale=2]
%Vertices
\foreach \x in {1,...,4} {
 \foreach \y in {1,...,4} {
   \node[inner sep=1.5pt, circle, fill=black]  at (\x,\y) {};
  }
 }
%Edges
\foreach \x in {1,...,3} {
 \foreach \y in {1,...,4} {
      \draw (\x,\y) -- (\x+1,\y);
 }
}
\foreach \x in {1,...,4} {
 \foreach \y in {1,...,3} {
      \draw (\x,\y) -- (\x,\y+0.8);
 }
}
\foreach \x in {1,...,3} {
 \foreach \y in {1,...,3} {
      \draw (\x,\y) -- (\x+0.8,\y+0.8);
 }
}
%Pairs mapped to a simplex
\foreach \x in {1,...,3} {
 \foreach \y in {1,3} {
      \draw [color=green, thick](\x+0.2,\y+0.8) -- (\x+1,\y);
 }
}
\foreach \x in {1,3} {
 \foreach \y in {1,2} {
      \draw [color=green, thick](\x+0.2,\y+0.8) -- (\x+1,\y);
 }
}
%Pair not mapped to a simplex
\draw [color=red, thick, dashed] (2.2,2.8) -- (3,2);
%Axes labels
    \node at (0.3,1) {$2l-2$};
    \node at (0.3,2) {$2l-1$};
    \node at (0.3,3) {$2l$};
    \node at (0.3,4) {$2l+1$};
    \node at (1,0.5) {$2k$};
 \foreach \x in {1,...,3} {
    \node at (\x+1,0.5) {$2k+\x$};
  }
%Vertex labels
\foreach \x in {1,2} {
 \foreach \y in {1,2} {
     \node[below]  at (\x,\y) {$f(\frac{k}{m}, \frac{l-1}{n})$};
  }
}
\foreach \x in {1,2} {
 \foreach \y in {1,2} {
     \node[below]  at (\x,\y+2) {$f(\frac{k}{m}, \frac{l}{n})$};
  }
}
\foreach \x in {1,2} {
 \foreach \y in {1,2} {
     \node[below]  at (\x+2,\y) {$f(\frac{k+1}{m}, \frac{l-1}{n})$};
  }
}
\foreach \x in {1,2} {
 \foreach \y in {1,2} {
     \node[below]  at (\x+2,\y+2) {$f(\frac{k+1}{m}, \frac{l}{n})$};
  }
}
\foreach \x in {2,4} {
 \foreach \y in {1,3} {
    \draw (\x,\y) circle[radius=2pt]  {};
  }
}
\end{tikzpicture}
\]
\caption{Values of the vertex map $f\circ \gamma\circ E^{-1}$  on $[2k, 2k+3] \times [2(l-1), 2l+1] \subseteq I_{2m+1} \times I_{2n+1}$. The pair of vertices $\{ f(\frac{k}{m}, \frac{l}{n}), f(\frac{k+1}{m}, \frac{l-1}{n}) \}$ need not be a simplex in $X$.}
\label{fig: f circ rho2 non-digital}
\end{figure}
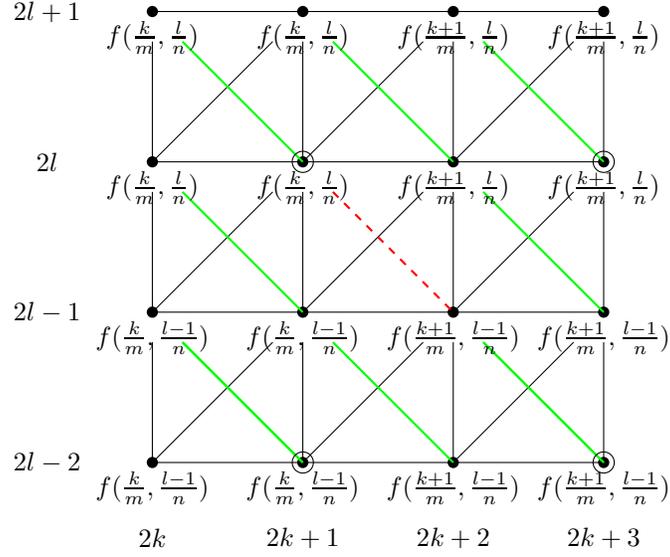 

\begin{figure}[h!]
\[
\begin{tikzpicture}[scale=2]
%Vertices
\foreach \x in {1,...,4} {
 \foreach \y in {1,...,4} {
   \node[inner sep=1.5pt, circle, fill=black]  at (\x,\y) {};
  }
 }
%Edges
\foreach \x in {1,...,3} {
 \foreach \y in {1,...,4} {
      \draw (\x,\y) -- (\x+1,\y);
 }
}
\foreach \x in {1,...,4} {
 \foreach \y in {1,...,3} {
      \draw (\x,\y) -- (\x,\y+0.8);
 }
}
\foreach \x in {1,...,3} {
 \foreach \y in {1,...,3} {
      \draw (\x,\y) -- (\x+0.8,\y+0.8);
 }
}
%Pairs mapped to a simplex
\foreach \x in {1,...,3} {
 \foreach \y in {1,2, 3} {
      \draw [color=green, thick](\x+0.2,\y+0.8) -- (\x+1,\y);
 }
}
%Axes labels
    \node at (0.3,1) {$2l-2$};
    \node at (0.3,2) {$2l-1$};
    \node at (0.3,3) {$2l$};
    \node at (0.3,4) {$2l+1$};
    \node at (1,0.5) {$2k$};
 \foreach \x in {1,...,3} {
    \node at (\x+1,0.5) {$2k+\x$};
  }
%Vertex labels
\foreach \x in {1,2} {
 \foreach \y in {1,2} {
     \node[below]  at (\x,\y+\x-1) {$f(\frac{k}{m}, \frac{l-1}{n})$};
  }
}
\foreach \y in {1,2} {
     \node[below]  at (1,\y+2) {$f(\frac{k}{m}, \frac{l}{n})$};
  }
     \node[below]  at (2,4) {$f(\frac{k}{m}, \frac{l}{n})$};
\foreach \x in {1,2} {
 \foreach \y in {1,2} {
     \node[below]  at (\x+2,\y+\x-1) {$f(\frac{k+1}{m}, \frac{l-1}{n})$};
  }
}
\foreach \y in {1,2} {
     \node[below]  at (3,\y+2) {$f(\frac{k+1}{m}, \frac{l}{n})$};
  }
       \node[below]  at (4,4) {$f(\frac{k+1}{m}, \frac{l}{n})$};
\node[below]  at (2,1) {$f(\frac{k}{m}, \frac{l-2}{n})$};
\node[below]  at (4,1) {$f(\frac{k+1}{m}, \frac{l-2}{n})$};
%circle vertices
\foreach \x in {2,4} {
 \foreach \y in {1,3} {
    \draw (\x,\y) circle[radius=2pt]  {};
  }
}
\end{tikzpicture}
\]
\caption{Values of $D_f$ on $[2k, 2k+3] \times [2(l-1), 2l+1] \subseteq I_{2m+1} \times I_{2n+1}$. The pair of vertices $\{ f(\frac{k}{m}, \frac{l-1}{n}), f(\frac{k+1}{m}, \frac{l-1}{n}) \}$ is a simplex in $X$.}
\label{fig: Df digital}
\end{figure}
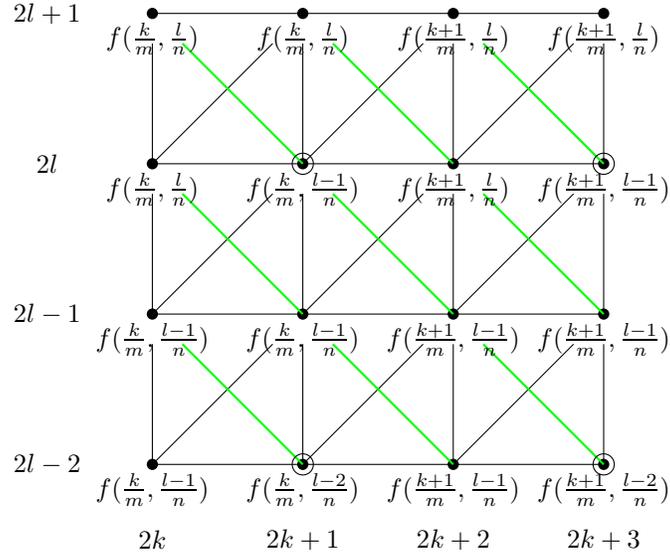 

\begin{proposition}\label{prop: digital f}
Let $f\colon \left( I_{m, n}, \partial I_{m, n} \right)  \to (X, x_0)$ be a simplicial map.
With $D_f$ constructed as above, we have a contiguity 
$$f\circ \gamma  \sim D_f \circ E\colon \left( I_{2m+1, 2n+1}, \partial I_{2m+1, 2n+1}  \right)  \to (X, x_0).$$  
\end{proposition}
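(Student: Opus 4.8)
The plan is to verify the contiguity relation $f\circ\gamma \sim D_f\circ E$ directly, by checking the defining condition on every maximal simplex of $I_{2m+1, 2n+1}$. Since this complex is $2$-dimensional, its maximal simplices are its $2$-simplices, so it suffices to show that $(f\circ\gamma)(\sigma)\cup(D_f\circ E)(\sigma)$ is contained in a simplex of $X$ for each $2$-simplex $\sigma$. The first step is to locate where the two vertex maps disagree. By the construction of $D_f$, and since $E$ carries the vertex $\big((2k+1)/(2m+1),\, 2l/(2n+1)\big)$ of $I_{2m+1, 2n+1}$ to the lattice point $(2k+1, 2l)$ of $I_{2m+1}\times I_{2n+1}$, the maps $f\circ\gamma$ and $D_f\circ E$ agree on every vertex of $I_{2m+1, 2n+1}$ except possibly at the ``special'' vertices $v_{k,l} := \big((2k+1)/(2m+1),\, 2l/(2n+1)\big)$ with $1\le k\le m-2$ and $2\le l\le n-1$; there one has $f\circ\gamma(v_{k,l}) = f(k/m, l/n)$ while $(D_f\circ E)(v_{k,l}) = f(k/m, (l-1)/n)$.

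Next I would dispose of the easy simplices. If a $2$-simplex $\sigma$ contains none of the $v_{k,l}$, then $f\circ\gamma$ and $D_f\circ E$ agree on all three of its vertices, so $(f\circ\gamma)(\sigma)\cup(D_f\circ E)(\sigma) = (f\circ\gamma)(\sigma)$ is a simplex of $X$ because $f\circ\gamma$ is simplicial. It remains to handle $2$-simplices meeting a special vertex. For this I would record two elementary observations: each $v_{k,l}$ is an interior vertex of $I_{2m+1, 2n+1}$ (this is forced by the ranges on $k$ and $l$), and no two distinct special vertices are joined by an edge — the coordinatewise difference of the corresponding lattice points is $\big(2(k-k'),\,2(l-l')\big)$, which is nonzero with both coordinates even, hence is not one of the edge directions of the triangulation (each of which has an odd coordinate). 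Consequently every $2$-simplex contains at most one special vertex.

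The crux is therefore a single $2$-simplex $\sigma$ containing exactly one special vertex $v_{k,l}$. Since $v_{k,l}$ is interior it is a vertex of exactly six $2$-simplices, and I would run through these six, computing $f\circ\gamma$ on the two non-special vertices of each using only that $\gamma$ halves coordinates (\defref{def: gamma2}). In each of the six cases the computation gives
$$(f\circ\gamma)(\sigma)\cup(D_f\circ E)(\sigma)\ \subseteq\ \Big\{\, f\big(\tfrac{k}{m},\tfrac{l}{n}\big),\ f\big(\tfrac{k}{m},\tfrac{l-1}{n}\big),\ f\big(\tfrac{k+1}{m},\tfrac{l}{n}\big)\,\Big\}.$$
The set $\big\{(k/m, l/n),\ (k/m, (l-1)/n),\ ((k+1)/m, l/n)\big\}$ is precisely the ``upper-left'' $2$-simplex of the sub-square $[k/m, (k+1)/m]\times[(l-1)/n, l/n]$ of $I^2$ — and this really is a $2$-simplex of $I_{m,n}$, since $1\le k\le m-2$ and $2\le l\le n-1$ keep the sub-square within $I_{m,n}$ — so its image under the simplicial map $f$ is a simplex of $X$, whence so is the subset displayed above. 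This establishes the contiguity condition on every $2$-simplex, so $f\circ\gamma\sim D_f\circ E$.

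I expect the main obstacle to be the six-case check around $v_{k,l}$: it is entirely mechanical, but one must enumerate the incident triangles carefully and evaluate $\gamma$ correctly on each neighbour. This local configuration of values is exactly what is pictured in Figures~\ref{fig: f circ rho2 non-digital} and~\ref{fig: Df digital}, so I would organize the bookkeeping around those two figures.
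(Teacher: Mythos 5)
Your proposal is correct and follows essentially the same route as the paper: identify the special vertices where $f\circ\gamma$ and $D_f\circ E$ disagree, observe that they are pairwise non-adjacent and interior, and check the contiguity condition on the hexagon of six $2$-simplices around each one. The only cosmetic difference is in the final step — you bound each union by $f$ applied to the upper-left $2$-simplex of the sub-square $[k/m,(k+1)/m]\times[(l-1)/n,l/n]$, whereas the paper identifies each union with $f\circ\gamma(\sigma_2)$ or $f\circ\gamma(\sigma_4)$ — and both conclusions are valid.
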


\begin{proof}  From the definition of $D_f$,   the maps 
$f\circ \gamma, D_f \circ E\colon I_{2m+1, 2n+1} \to X$ agree at all vertices of $I_{2m+1, 2n+1}$ other than those with coordinates 
$$\left( \dfrac{2k+1}{2m+1}, \dfrac{2l}{2n+1} \right)$$
for $1 \leq k \leq m-2$ and $2 \leq l \leq n-1$.
Since vertices of this form  for different $k$ or $l$ are separated from each other vertically, horizontally and diagonally by intermediate vertices, no two of these occur in a single simplex of $I_{2m+1, 2n+1} $.  Thus, it is sufficient to check the contiguity condition for simplices that contain a typical such vertex to conclude the contiguity $f\circ \gamma  \sim D_f\circ E$.

So, consider a vertex $\left( (2k+1)/(2m+1), (2l)/(2n+1) \right)$ and the $6$ simplices of $I_{2m+1, 2n+1} $ that contain it, as pictured in \figref{fig: hexagon2}.  The $6$ surrounding vertices are labeled with the common values of $f\circ \gamma$ and  $D_f \circ E$.   On the (central) vertex on which the values of of $f\circ \gamma$ and  $D_f \circ E$ differ, we have
$$f\circ \gamma\left( \dfrac{2k+1}{2m+1}, \dfrac{2l}{2n+1} \right) = f\left( \dfrac{k}{m}, \dfrac{l}{n} \right)$$
and
$$ D_f \circ E\left( \dfrac{2k+1}{2m+1}, \dfrac{2l}{2n+1} \right) = f\left( \dfrac{k}{m}, \dfrac{l-1}{n} \right).$$
\begin{figure}[h!] 
\tikzset{vertex/.style={circle,draw,fill,inner sep=0pt,minimum size=1mm}}
\[
\begin{tikzpicture}[scale=2.1,every node/.style={scale=.7}]
\node[vertex,label={}] at (0,0) {};
\node[vertex,label={[right]$f(\frac{k+1}{m},\frac{l}{n})$}] at (1,1) {};
\node[vertex,label={[above]$f(\frac{k}{m},\frac{l}{n})$}] at (0,1) {};
\node[vertex,label={[left]$f(\frac{k}{m},\frac{l}{n})$}] at (-1,0) {};
\node[vertex,label={[left]$f(\frac{k}{m},\frac{l-1}{n})$}] at (-1,-1) {};
\node[vertex,label={[below]$f(\frac{k}{m},\frac{l-1}{n})$}] at (0,-1) {};
\node[vertex,label={[right]$f(\frac{k+1}{m},\frac{l}{n})$}] at (1,0) {};
\draw (1,0) -- (1,1) -- (0,1) -- (-1,0) -- (-1,-1) -- (0,-1) -- (1,0);
\draw (-1,0) -- (1,0);
\draw (0,-1) -- (0,1);
\draw (-1,-1) -- (1,1);
\node at (-0.3,0.3) {$\sigma_1$};
\node at (-0.7,-0.3) {$\sigma_2$};
\node at (-0.3,-0.7) {$\sigma_3$};
\node at (0.3,-0.3) {$\sigma_4$};
\node at (0.7,0.3) {$\sigma_5$};
\node at (0.3,0.7) {$\sigma_6$};
\end{tikzpicture}
\]
\caption{The hexagon of $2$-simplices in $I_{2m+1,2n+1}$ surrounding the vertex $\left( (2k+1)/(2m+1), (2l)/(2n+1) \right)$. Other vertices are labeled with the common values of 
$f\circ \gamma$ and $D_f\circ E$ at those vertices.}\label{fig: hexagon2}
\end{figure}
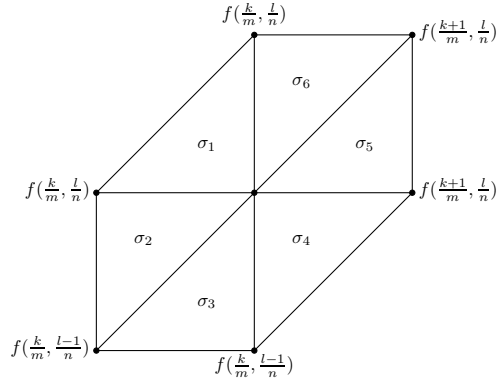
A direct check  shows that we have
$$f\circ \gamma (\sigma_t) \cup D_f\circ E(\sigma_t) =  \begin{cases} f\circ \gamma ( \sigma_2) &\text{ if $t = 1, 2, 3$}, \\
f\circ \gamma ( \sigma_4) &\text{ if $t = 4, 5, 6$}.
\end{cases}
$$
Since $f\circ \gamma$ is a simplicial map, we have $f\circ \gamma (\sigma_t)$ a simplex of $X$ for each $t$, and it follows that we have $f\circ \gamma  \sim D_f\circ E$.  
\end{proof}

With pre-composition by $E$, which we write as $E^*$,  and the construction of $D_f$ from $f$, we have functions between sets of simplicial maps
\bigskip
$$
\left\{\vcenter{ \hbox{\text{Maps}} \hbox{\text{$(I_{m, n}, \partial I_{m, n}) \to (X, x_0)$}} \hbox{\ \ \ \text{all $m$ and $n$} }} \right\} \qquad
\xymatrix{   \ar@<1ex>[r]^-{D_{(-)}} 
&  \ar@<1ex>[l]^-{E^*} } 
\qquad \left\{ \vcenter{ \hbox{\text{Maps}} \hbox{\text{$\left(I_{m} \times I_{n}, \partial I(I_{m} \times I_{n}) \right) \to (X, x_0)$} } \hbox{\text{all $m$ and $n$} }} \right\}
$$
\bigskip

\propref{prop: digital f} describes the effect of the composition $E^* \circ D_{(-)}$.  Namely, up to a contiguity it adjusts $f$ to $f\circ \gamma$, which is a sort of extension contiguity (see \remref{rem: presumed G(X)}---we have not developed this notion in the context of maps $I_{m, n} \to X$).  The next result considers the composition $D_{(-)} \circ E^*$.

\begin{proposition}\label{prop: E then D_f}
Given $g\colon \left(I_{m} \times I_{n}, \partial I(I_{m} \times I_{n}) \right) \to (X, x_0)$.  The maps  
$D_{g\circ E}$ and $g$ are extension-contiguity equivalent: 
we have  $D_{g\circ E} \approx g$. 
\end{proposition}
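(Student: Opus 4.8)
The plan is to recognize $D_{g\circ E}$ as a slight modification of the simplicial map obtained from $g$ by doubling every row and every column, and then to absorb both the doubling and the modification into extension-contiguity. Let $\beta\colon I_{2m+1}\to I_m$ be the (simplicial) monotone surjection with $\beta(2k)=\beta(2k+1)=k$ for $0\le k\le m$; it is a composition of $m+1$ maps of the form $\alpha_i$ from \defref{def: extensions}. Define $\beta'\colon I_{2n+1}\to I_n$ analogously. Unwinding \defref{def: gamma2} and the definition of $E$, one sees that the vertex map $g\circ E\circ\gamma\circ E^{-1}$ sends $(2k+\epsilon_1,2l+\epsilon_2)$ to $g(k,l)$; that is, it is precisely the vertex map of the simplicial map $g\circ(\beta\times\beta')\colon I_{2m+1}\times I_{2n+1}\to X$. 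By the construction of $D_f$ (applied to $f=g\circ E$), the map $D_{g\circ E}$ has this same vertex map at every vertex except the isolated vertices $(2k+1,2l)$ with $1\le k\le m-2$ and $2\le l\le n-1$, where $g\circ(\beta\times\beta')$ takes the value $g(k,l)$ while $D_{g\circ E}$ takes the value $g(k,l-1)$.

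\textbf{Step 1: $D_{g\circ E}\simeq g\circ(\beta\times\beta')$.} The vertices $(2k+1,2l)$ at which the two maps disagree are pairwise non-adjacent in $I_{2m+1}\times I_{2n+1}$, so no simplex contains two of them; since the maps agree elsewhere, it suffices to verify the contiguity condition on each maximal ($3$-)simplex containing a single such vertex. There are four such unit-square simplices $\sigma$. Computing the values of $\beta\times\beta'$ on their vertices, one checks in each case that $D_{g\circ E}(\sigma)\cup g\circ(\beta\times\beta')(\sigma)$ equals $g(\tau)$ for a single simplex $\tau$ of $I_m\times I_n$: for the two squares to the left of $(2k+1,2l)$, $\tau$ is the edge $\{(k,l-1),(k,l)\}$; for the square to the lower right, $\tau$ is the full square $\{(k,l-1),(k+1,l-1),(k,l),(k+1,l)\}$; and for the square to the upper right, $\tau$ is the $2$-simplex $\{(k,l-1),(k,l),(k+1,l)\}$. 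As $g$ is simplicial, each such union is a simplex of $X$, so $D_{g\circ E}\sim g\circ(\beta\times\beta')$; in particular $D_{g\circ E}\simeq g\circ(\beta\times\beta')$.

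\textbf{Step 2: $g\circ(\beta\times\beta')\approx g$, hence the result.} Writing $\beta=\alpha_I$ and $\beta'=\alpha_{I'}$ with $I$ of length $m+1$ and $I'$ of length $n+1$, part (b) of \propref{prop: contiguity results2} gives $\beta\times\beta'\simeq\alpha_m^{m+1}\times\alpha_n^{n+1}\colon I_{2m+1}\times I_{2n+1}\to I_m\times I_n$; composing with $g$ and invoking part (a) of \propref{prop: contiguity results1}, we get $g\circ(\beta\times\beta')\simeq g\circ(\alpha_m^{m+1}\times\alpha_n^{n+1})$, and the right-hand side is a trivial extension of $g$. Thus $g\circ(\beta\times\beta')\approx g$, and combining with Step 1 and transitivity of $\approx$ (\thmref{thm: extn-cont equiv}) yields $D_{g\circ E}\approx g$.

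\textbf{Main obstacle.} The content is entirely in Step~1: one must correctly describe the four $3$-simplices of $I_{2m+1}\times I_{2n+1}$ around a modified vertex, evaluate $\beta\times\beta'$ and $D_{g\circ E}$ on their vertices, and identify the resulting unions as $g$ applied to simplices of $I_m\times I_n$. The slightly delicate point is the upper-right square, where replacing $g(k,l)$ by $g(k,l-1)$ forces the verification that $\{(k,l-1),(k,l),(k+1,l)\}$ is a simplex of $I_m\times I_n$ (it is, since its projections to the two factors are $\{k,k+1\}$ and $\{l-1,l\}$).
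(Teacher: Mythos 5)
Your proposal is correct and follows essentially the same route as the paper: identify $D_{g\circ E}$ as the row-and-column-doubled map $g\circ(\beta\times\beta')$ (the paper's $g\circ(\alpha_I\times\alpha_J)$) modified at the isolated vertices $(2k+1,2l)$, verify the contiguity on the four surrounding $3$-simplices, and then convert the doubling into a trivial extension via \propref{prop: contiguity results2} and \propref{prop: contiguity results1}. The only cosmetic difference is that you certify the unions as $g(\tau)$ for simplices $\tau$ of $I_m\times I_n$ rather than as $D_{g\circ E}(\tau')$ for simplices $\tau'$ of the domain, which is an equivalent (and equally valid) check.
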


\begin{proof}
From the definitions, the map $D_{g\circ E}  \left(I_{2m+1} \times I_{2n+1}, \partial I(I_{2m+1} \times I_{2n+1}) \right) \to (X, x_0)$ agrees on vertices with the vertex map $g\circ E \circ \gamma\circ E^{-1}$ except at certain vertices. Thus, 
for $0 \leq \epsilon_1, \epsilon_2 \leq 1$, we have
$$D_{g\circ E}\left( 2k+\epsilon_1, 2l+\epsilon_2 \right) = g\circ E \circ \gamma\left( \dfrac{2k+\epsilon_1}{2m+1}, \dfrac{2l+\epsilon_2}{2n+1} \right) = 
g\circ E\left( \dfrac{k}{m}, \dfrac{l}{n} \right) = g(k, l),$$
except when $(\epsilon_1, \epsilon_2) = (1, 0)$ and  $1 \leq k \leq m-2$ and $2 \leq l \leq n-1$, where we have
$$D_{g\circ E}\left( 2k+1, 2l \right) = g(k, l-1).$$
In the notation of \defref{def: extensions}, let $\alpha_I\colon I_{2m+1} \to I_m$ denote the composition
$$\alpha_I := \alpha_{0} \circ \alpha_{2} \circ \cdots \circ \alpha_{2(m-1)}\circ \alpha_{2m} \colon I_{2m+1} \to I_m$$
and likewise $\alpha_J := \alpha_{0} \circ  \cdots \circ \alpha_{2n} \colon I_{2n+1} \to I_n$. Then we have 
$$(\alpha_I \times \alpha_J)(2k+\epsilon_1, 2l+\epsilon_2) = (k, l)$$
for $0 \leq \epsilon_1, \epsilon_2 \leq 1$ and each $0 \leq k \leq m$, $0 \leq l \leq n$.  This is justified by arguing as we did in the proof of \lemref{lem: switching E} with compositions of the $\alpha_i$.  We leave the details to the reader. Hence, the maps $D_{g\circ E}$ and $g\circ (\alpha_I \times \alpha_J)$ agree at all vertices of $I_{2m+1}  \times I_{2n+1}$ except at some of form $( 2k+1, 2l)$.  We claim there is a contiguity 
$$D_{g\circ E} \sim g\circ (\alpha_I \times \alpha_J)\colon   \left( I_{2m+1}  \times I_{2n+1}, \partial(I_{2m+1}  \times I_{2n+1}) \right) \to (X, x_0).$$
This is argued in a similar way to how we argued in \propref{prop: digital f}.  Vertices of the form $( 2k+1, 2l)$ for different $k$ or $l$ are separated from each other vertically, horizontally and diagonally by intermediate vertices, no two of these occur in a single simplex of $I_{2m+1}  \times I_{2n+1}$.  Thus, it is sufficient to check the contiguity condition for simplices that contain a typical such vertex to conclude the contiguity.
\begin{figure}[h!]
\[
\begin{tikzpicture}[scale=2.5]
%Vertices
\foreach \x in {1, 2,3} {
 \foreach \y in {1,...,3} {
   \node[inner sep=1.5pt, circle, fill=black]  at (\x,\y) {};
  }
 }
%Edges
%horizontal
\foreach \y in {1,...,3} {
      \draw (1,\y) -- (3,\y);
 } 
 %vertical
\foreach \x in {1,...,3} {
\foreach \y in {1,2} {
      \draw (\x,\y) -- (\x,\y+0.8);
 }
 }
% slope +1
\foreach \x in {1,2} {
\foreach \y in {1,2} {
      \draw (\x,\y) -- (\x+0.8,\y+0.8);
}
}
%slope -1
\foreach \x in {1,2} {
\foreach \y in {1,2} {
      \draw (\x+0.2,\y+0.8) -- (\x+1,\y);
}
}
%filling in no-label at centre
\draw (1,1) -- (2,2);
\draw (2,1) -- (2,2);
\draw (3,1) -- (2,2);

%Axes labels
    \node at (0.5,1) {$2l-1$};
    \node at (0.5,2) {$2l$};
    \node at (0.5,3) {$2l+1$};
    \node at (1,0.5) {$2k$};
    \node at (2,0.5) {$2k+1$};
    \node at (3,0.5) {$2k+2$};

%Vertex labels

\node[below]  at (1,1) {$g(k, l-1)$};
\node[below]  at (1,2) {$g(k, l)$};
\node[below]  at (1,3) {$g(k, l)$};

\node[below]  at (2,1) {$g(k, l-1)$};
\node[below]  at (2,3) {$g(k, l)$};

\node[below]  at (3,1) {$g(k+1, l-1)$};
\node[below]  at (3,2) {$g(k+1, l)$};
\node[below]  at (3,3) {$g(k+1, l)$};

%circle vertices
    \draw (2,2) circle[radius=2pt]  {};
%simplex labels

\node at (1.5,2.3) {$s_2$};
\node at (2.5,2.3) {$s_1$};

\node at (1.5,1.3) {$s_3$};
\node at (2.5,1.3) {$s_{4}$};
\end{tikzpicture}
\]
\caption{The $4$ simplices of $I_{2m+1}\times I_{2n+1}$ that contain $(2k+1, 2l)$ as a vertex (circled).  Values of $D_{g\circ E}$ and $g\circ(\alpha_I \times \alpha_J)$ differ only at this vertex. The surrounding vertices are labeled with their common values.}
\label{fig: DgE extn contig g}
\end{figure}
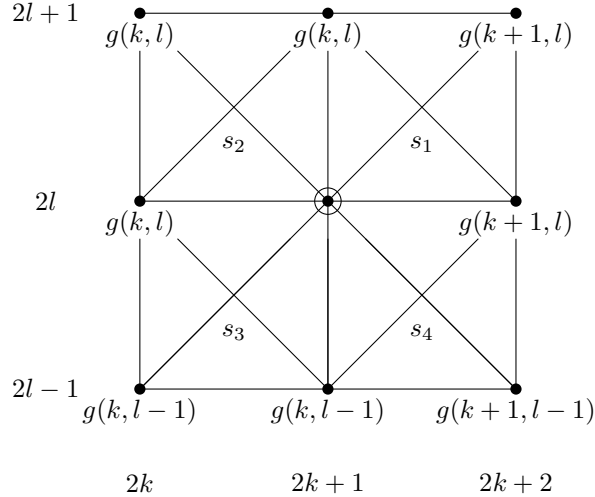 
So, consider a vertex $(2k+1, 2l)$  and the $4$ simplices of $I_{2m+1} \times I_{2n+1} $ that contain it, as pictured in \figref{fig: DgE extn contig g}.  The $8$ surrounding vertices are labeled with the common values of $D_{g\circ E}$ and  $g\circ(\alpha_I \times \alpha_J)$.   On the (circled) vertex on which their values  differ, we have
$$D_{g\circ E}(2k+1, 2l) = g(k, l-1) \qquad \text{and} \qquad g\circ(\alpha_I \times \alpha_J)(2k+1, 2l) = g(k, l).$$
 A direct check shows that we have
$$D_{g\circ E}(s_t) \cup g\circ(\alpha_I \times \alpha_J)(s_t) =  \begin{cases} D_{g\circ E}(\tau) &\text{ if $t = 1$}, \\
D_{g\circ E}(s_3) &\text{ if $t = 2, 3$} \\
D_{g\circ E}(s_4) &\text{ if $t = 4$},
\end{cases}
$$
 where $\tau$ denotes the $2$-simplex $\{ (2k+1, 2l), (2k+2, 2l), (2k+1, 2l-1) \} \subseteq s_1$ of $I_{2m+1} \times I_{2n+1} $.  Since $D_{g\circ E}$ is a simplicial map, the contiguity follows. 
 
From part (b) of \propref{prop: contiguity results2} and part (a) of \propref{prop: contiguity results1}, we deduce a contiguity equivalence 
$$D_{g\circ E} \simeq g\circ (\alpha^{m+1}_m \times \alpha^{n+1}_n)\colon   \left( I_{2m+1}  \times I_{2n+1}, \partial(I_{2m+1}  \times I_{2n+1}) \right) \to (X, x_0),$$
with $g\circ (\alpha^{m+1}_m \times \alpha^{n+1}_n)$ a trivial extension of $g$.  The result follows.
\end{proof}

\begin{lemma}\label{lem: extension of contiguity} 
Suppose we have a contiguity of simplicial maps 
$$g \sim g' \colon(I_{m, n}, \partial I_{m, n}) \to (X, x_0).$$
Let $D_g, D_{g'} \colon \left(I_{2m+1} \times I_{2n+1}, \partial (I_{2m+1} \times I_{2n+1}) \right) \to (X, x_0)$ be the simplicial maps constructed from $g$ and $g'$ of \propref{prop: Df simplicial}.  Then we have a contiguity equivalence  
$$D_g \simeq D_{g'} \colon \left(I_{2m+1} \times I_{2n+1}, \partial I(I_{2m+1} \times I_{2n+1}) \right) \to (X, x_0).$$
\end{lemma}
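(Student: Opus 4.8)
The plan is to prove the stronger statement that $D_g$ and $D_{g'}$ are in fact contiguous, $D_g \sim D_{g'}$ — a single contiguity certainly being a contiguity equivalence. Since contiguity of maps out of $I_{2m+1} \times I_{2n+1}$ may be checked on $3$-simplices alone (\secref{sec: extn contiguity}), it suffices to show that $D_g(\tau) \cup D_{g'}(\tau)$ is a simplex of $X$ for every $3$-simplex $\tau = \{(a,b),(a+1,b),(a,b+1),(a+1,b+1)\}$ of $I_{2m+1}\times I_{2n+1}$.

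The first point to record is that the construction of \propref{prop: Df simplicial} is uniform in the map being fed in: there is a single vertex function $R \colon V(I_{2m+1}\times I_{2n+1}) \to V(I_{m,n})$ — namely the vertex function $\gamma \circ E^{-1}$, adjusted at the isolated vertices $(2k+1, 2l)$ (for $1 \leq k \leq m-2$, $2\leq l\leq n-1$), where instead $R(2k+1,2l) = (k, l-1)$ — such that $D_f = f \circ R$ on vertices for \emph{every} simplicial map $f \colon (I_{m,n}, \partial I_{m,n}) \to (X, x_0)$. Since $R$ does not depend on $f$, the image $R(\tau)$ of the vertex set of a $3$-simplex $\tau$ is a subset of the vertices of $I_{m,n}$ that is the same whether we then apply $g$ or $g'$, and $D_g(\tau) = g(R(\tau))$, $D_{g'}(\tau) = g'(R(\tau))$.

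The core of the argument would then be the following claim: for each $3$-simplex $\tau$ of $I_{2m+1}\times I_{2n+1}$ there is a simplex $\sigma_\tau$ of $I_{m,n}$, depending on $\tau$ alone, such that $f(R(\tau)) = f(\sigma_\tau)$ for every simplicial $f \colon (I_{m,n}, \partial I_{m,n}) \to (X, x_0)$. This is essentially \propref{prop: Df simplicial} rearranged, and I would verify it by running through $\tau$ according to the parities of $a$ and $b$. Each $\tau$ has exactly one corner of parity (odd, even), the only candidate for the adjustment in $R$; when this corner lies in the adjustment range the adjusted value makes $R(\tau)$ equal to one of the two triangles of a unit square of $I_{m,n}$ (this is exactly what the figures accompanying \propref{prop: Df simplicial} display), and when it lies outside that range $R(\tau)$ turns out to be a vertex or an edge of $I_{m,n}$; in all these cases one takes $\sigma_\tau = R(\tau)$. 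The single exceptional case is $a$ and $b$ both odd with the adjustment corner outside its range; then $\tau$ sits near $\partial(I_{2m+1}\times I_{2n+1})$, $R(\tau)$ is a $2\times 2$ block $\{i,i+1\}\times\{j,j+1\}$ of vertices of $I_{m,n}$, and at least two vertices of this block — those on the edge of the block meeting $\partial I_{m,n}$ — are sent to $x_0$ by every such $f$; a short direct check (its outcome depending only on which edge of the block meets $\partial I_{m,n}$) shows that $f(R(\tau))$ equals $f$ applied to an appropriate one of the two triangles of the block, and that triangle is $\sigma_\tau$.

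Granting the claim, the lemma is immediate: for each $3$-simplex $\tau$,
$$D_g(\tau) \cup D_{g'}(\tau) = g\big(R(\tau)\big) \cup g'\big(R(\tau)\big) = g(\sigma_\tau) \cup g'(\sigma_\tau),$$
which is a simplex of $X$ since $g \sim g'$; hence $D_g \sim D_{g'}$ and therefore $D_g \simeq D_{g'}$. I expect the main obstacle to be the verification of the claim — in particular the exceptional ``$2\times 2$ block'' case, where one must confirm that collapsing the boundary vertices to $x_0$ makes $f(R(\tau))$ agree with $f$ on a genuine triangle of $I_{m,n}$, and that this triangle can be chosen uniformly in $f$. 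This is routine casework of exactly the kind already carried out in \propref{prop: Df simplicial}, here made uniform in the post-composed map.
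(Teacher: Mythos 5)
Your proposal is correct, and it takes a genuinely different route from the paper. The paper first reduces the given contiguity $g \sim g'$ to a chain of ``spider moves'' (contiguities changing the value at a single vertex of $I_{m,n}$ --- a decomposition it leaves to the reader), then for each spider move checks the contiguity condition locally on the ten simplices of $I_{2m+1}\times I_{2n+1}$ surrounding the four vertices where $D_g$ and $D_{g'}$ differ, and finally chains these single contiguities into a contiguity equivalence. You instead exploit the uniformity of the construction: $D_f = f\circ R$ for a fixed vertex map $R$, and for every $3$-simplex $\tau$ the set $R(\tau)$ is (after using the boundary condition $f(\partial I_{m,n})=x_0$ in the one exceptional ``$2\times 2$ block'' family) realized as $f(\sigma_\tau)$ for a simplex $\sigma_\tau$ of $I_{m,n}$ chosen independently of $f$, so the contiguity of $g$ and $g'$ transfers wholesale. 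Your casework checks out: the only parity class whose image under $R$ can fail to be a simplex of $I_{m,n}$ is the $(a,b)$ both-odd case with the $(2k+1,2l+2)$ corner outside the adjustment range, which forces $k\in\{0,m-1\}$ or $l\in\{0,n-1\}$ and hence puts a full edge of the block on $\partial I_{m,n}$, exactly as you say. (One small imprecision: when the adjustment corner is in range, $R(\tau)$ is sometimes an edge rather than a triangle --- e.g.\ for $\tau$ with both $a,b$ even --- but since you take $\sigma_\tau = R(\tau)$ in all such cases this does not matter.) Your argument buys two things the paper's does not: it avoids the spider-move decomposition entirely, and it yields the stronger conclusion that $D_g$ and $D_{g'}$ are contiguous in a single step whenever $g\sim g'$, rather than merely contiguity equivalent; the paper's approach, in exchange, keeps each verification confined to a small neighborhood of the changed vertex and reuses the hexagon/block figures already drawn for \propref{prop: Df simplicial}.
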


\begin{proof}
First consider the case in which the contiguity  $g \sim g'$ is a ``spider move\footnote{Terminology adopted from Laura Scull.}" that changes the value of $g$ at exactly one vertex of $I_{m, n}$.  We will show that in this special case, we actually have a contiguity $D_g \sim D_{g'} \colon \left(I_{2m+1} \times I_{2n+1}, \partial (I_{2m+1} \times I_{2n+1}) \right) \to (X, x_0)$. So, suppose we have $g \sim g'$ and for some particular  $(k/m, l/n) \in I_{m, n}$, with $1 \leq k \leq m-1$ and $1 \leq l \leq n-1$ we have 
$$g'(i/m, j/n) =  g(i/m, j/n)$$
for all $(i, j) \not= (k, l)$.  The values $g(k/m, l/n)$ and $g'(k/m, l/n)$, together with the common values of $g(i/m, j/n)=g'(i/m, j/n)$ on adjacent vertices of $I_{m, n}$, must satisfy the contiguity condition.  These data are summarized in Figure~\ref{fig: hexagong}.  In the figure we have circled the (only) vertex at which the values of $g$ and $g'$ differ.  The contiguity condition implies that $g(\sigma_t) \cup g'(\sigma_t)$ span a simplex in $X$, for $t=1, \ldots, 6$.   For example, evaluating on $\sigma_1$ we have that
$$g(\sigma_1) \cup g'(\sigma_1) = \{ g((k-1)/m, l/n), g(k/m, (l+1)/n), g(k/m, l/n), g'(k/m, l/n) \}$$
is a simplex of $X$.

\begin{figure}[h!] 
\tikzset{vertex/.style={circle,draw,fill,inner sep=0pt,minimum size=1mm}}
\[
\begin{tikzpicture}[scale=2.1,every node/.style={scale=.7}]
\node[vertex,label={}] at (0,0) {};
\node[vertex,label={[right]$g(\frac{k+1}{m},\frac{l+1}{n})$}] at (1,1) {};
\node[vertex,label={[above]$g(\frac{k}{m},\frac{l+1}{n})$}] at (0,1) {};
\node[vertex,label={[left]$g(\frac{k-1}{m},\frac{l}{n})$}] at (-1,0) {};
\node[vertex,label={[left]$g(\frac{k-1}{m},\frac{l-1}{n})$}] at (-1,-1) {};
\node[vertex,label={[below]$g(\frac{k}{m},\frac{l-1}{n})$}] at (0,-1) {};
\node[vertex,label={[right]$g(\frac{k+1}{m},\frac{l}{n})$}] at (1,0) {};
\draw (1,0) -- (1,1) -- (0,1) -- (-1,0) -- (-1,-1) -- (0,-1) -- (1,0);
\draw (-1,0) -- (1,0);
\draw (0,-1) -- (0,0);
\draw (0,.2) -- (0,1);

\draw (-1,-1) -- (0,0);
\draw (.25,.25) -- (1,1);

\node at (-0.3,0.3) {$\sigma_1$};
\node at (-0.7,-0.3) {$\sigma_2$};
\node at (-0.3,-0.7) {$\sigma_3$};
\node at (0.3,-0.3) {$\sigma_4$};
\node at (0.7,0.3) {$\sigma_5$};
\node at (0.3,0.7) {$\sigma_6$};
\draw (0,0) circle[radius=2pt]  {};
\node[label={[above] $g(\frac{k}{m}, \frac{l}{n}) \text{ or } g'(\frac{k}{m}, \frac{l}{n}) $}] at (0,0) {};
\end{tikzpicture}
\]
\caption{The hexagon of $2$-simplices in $I_{m,n}$ that contain $(k/m, l/n)$ as a vertex. Surrounding vertices are labeled with the (common) values $g$ and $g'$ take at them.
Values of $g$ and $g'$ differ only at $(k/m, l/n)$.}\label{fig: hexagong}
\end{figure}
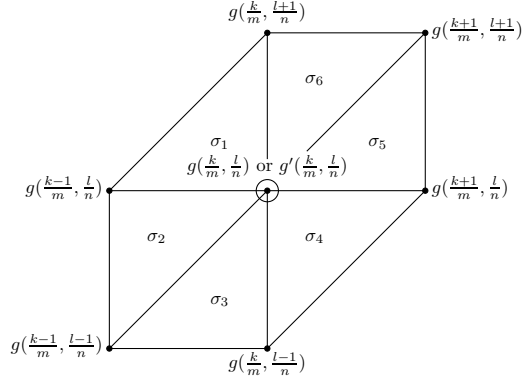

\begin{figure}[h!]
\[
\begin{tikzpicture}[scale=2]
%Vertices
\foreach \x in {2,3} {
 \foreach \y in {1,...,5} {
   \node[inner sep=1.5pt, circle, fill=black]  at (\x,\y) {};
  }
 }
 \foreach \y in {1,...,4} {
   \node[inner sep=1.5pt, circle, fill=black]  at (1,\y) {};
 }
  \foreach \y in {2,...,5} {
   \node[inner sep=1.5pt, circle, fill=black]  at (4,\y) {};
 }
%Edges
%horizontal
\foreach \y in {1,...,4} {
      \draw (1,\y) -- (2,\y);
 } 
 \foreach \y in {1,...,5} {
      \draw (2,\y) -- (3,\y);
 }
 \foreach \y in {2,...,5} {
      \draw (3,\y) -- (4,\y);
 } 
 %vertical
\foreach \y in {1,...,3} {
      \draw (1,\y) -- (1,\y+0.8);
 }
\foreach \x in {2,3} {
 \foreach \y in {1,...,4} {
      \draw (\x,\y) -- (\x,\y+0.8);
 }
}
\foreach \y in {2,...,4} {
      \draw (4,\y) -- (4,\y+0.8);
 }
% slope +1
\foreach \y in {1,...,3} {
      \draw (1,\y) -- (1+0.8,\y+0.8);
}
\foreach \y in {1,...,4} {
      \draw (2,\y) -- (2+0.8,\y+0.8);
}
\foreach \y in {2,...,4} {
      \draw (3,\y) -- (3+0.8,\y+0.8);
}
%slope -1
\foreach \y in {1,2, 3} {
      \draw (1+0.2,\y+0.8) -- (2,\y);
}
\foreach \y in {1,2, 3, 4} {
      \draw (2+0.2,\y+0.8) -- (3,\y);
}
\foreach \y in {2, 3, 4} {
      \draw (3+0.2,\y+0.8) -- (4,\y);
}

%Axes labels
    \node at (0.3,1) {$2l-1$};
    \node at (0.3,2) {$2l$};
     \foreach \y in {1,...,3} {
    \node at (0.3,\y+2) {$2l+\y$};
  }
    \node at (1,0.5) {$2k-1$};
    \node at (2,0.5) {$2k$};
 \foreach \x in {1,2} {
    \node at (\x+2,0.5) {$2k+\x$};
  }
%Vertex labels
\foreach \y in {1,2} {
     \node[below]  at (1,\y) {$g(\frac{k-1}{m}, \frac{l-1}{n})$};
}
\foreach \y in {3,4} {
     \node[below]  at (1,\y) {$g(\frac{k-1}{m}, \frac{l}{n})$};
}

\node[below]  at (2,1) {$g(\frac{k}{m}, \frac{l-1}{n})$};
\foreach \y in {4,5} {
     \node[below]  at (2,\y) {$g(\frac{k}{m}, \frac{l+1}{n})$};
}
\foreach \y in {1,2} {
     \node[below]  at (3,\y) {$g(\frac{k}{m}, \frac{l-1}{n})$};
}
\node[below]  at (3,5) {$g(\frac{k}{m}, \frac{l+1}{n})$};
\foreach \y in {2,3} {
     \node[below]  at (4,\y) {$g(\frac{k+1}{m}, \frac{l}{n})$};
}
\foreach \y in {4,5} {
     \node[below]  at (4,\y) {$g(\frac{k+1}{m}, \frac{l+1}{n})$};
}
%circle vertices
\foreach \x in {2,3} {
    \draw (\x,\x+1) circle[radius=2pt]  {};
    \draw (\x,\x) circle[radius=2pt]  {};  
}
%circle vertices fill-ins
\draw (1,1) -- (2,2);
\draw (2,1) -- (2,2);
\draw (3,1) -- (2,2);

\draw (1,2) -- (2,3);
\draw (2,2) -- (2,3);
\draw (3,2) -- (2,3);

\draw (2,2) -- (3,3);
\draw (3,2) -- (3,3);
\draw (4,2) -- (3,3);

\draw (2,3) -- (3,4);
\draw (3,3) -- (3,4);
\draw (4,3) -- (3,4);

%simplex labels
\node at (2.5,4.3) {$s_1$};
\node at (3.5,4.3) {$s_2$};
\node at (1.5,3.3) {$s_3$};
\node at (2.5,3.3) {$s_4$};
\node at (3.5,3.3) {$s_5$};
\node at (1.5,2.3) {$s_6$};
\node at (2.5,2.3) {$s_7$};
\node at (3.5,2.3) {$s_8$};
\node at (1.5,1.3) {$s_9$};
\node at (2.5,1.3) {$s_{10}$};
\end{tikzpicture}
\]
\caption{Values of $D_g$ and $D_{g'}$ on the four vertices on which they differ and the $10$ simplices of $I_{2m+1}\times I_{2n+1}$ that involve them.}
\label{fig: Dg contig Dg'}
\end{figure}
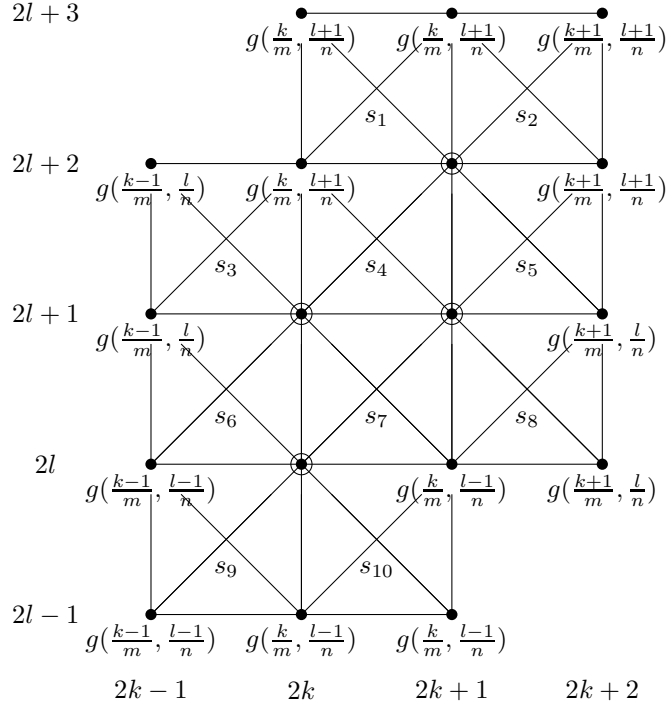 

Now consider the maps   $D_g, D_{g'} \colon \left(I_{2m+1} \times I_{2n+1}, \partial (I_{2m+1} \times I_{2n+1}) \right) \to (X, x_0)$.  Following the description of these maps from above, we see that they differ in value only at the four vertices $\{ (2k, 2l), (2k, 2l+1), (2k+1, 2l+1), (2k+1, 2l+2) \}$ of $I_{2m+1} \times I_{2n+1}$ (recall that, for $D_g$ for instance, we changed the values from those of the vertex map $g\circ \gamma \circ E^{-1}$ at the bottom-right corner of each block of four vertices resulting from the two-fold subdivision).   These vertices and the simplices that surround them are pictured in Figure~\ref{fig: Dg contig Dg'}.  The $4$ vertices on which the values of $D_g$ and $D_{g'}$ differ are circled.  On these four vertices, $D_g$ takes the value $g(k/m, l/n)$ and $D_{g'}$ takes the value $g'(k/m, l/n)$. To check the contiguity $D_g \sim D_{g'}$ we just need to check that for each of the $10$ marked simplices, we have that
 $D_g(s_t) \cup D_{g'}(s_t)$ is a simplex of $X$.    We do this directly, referring to the data from \figref{fig: hexagong} for each simplex $\sigma_1, \ldots, \sigma_6$ and \figref{fig: Dg contig Dg'} for each simplex $s_1, \ldots, s_{10}$.  We find as follows:
$$D_g(s_t) \cup D_{g'}(s_t)  = \{ g(k/m, (l+1)/n), g(k/m, l/n), g'(k/m, l/n) \} \subseteq g(\sigma_1) \cup g'(\sigma_1)$$
for $t = 1, 4$ and 
$$D_g(s_t) \cup D_{g'}(s_t)  = \{ g(k/m, (l-1)/n), g(k/m, l/n), g'(k/m, l/n) \} \subseteq g(\sigma_3) \cup g'(\sigma_3)$$
for $t = 7, 10$.  On the other simplices, we leave the reader to check that we have   
$$\begin{aligned}
D_g(s_2) \cup D_{g'}(s_2) & =  g(\sigma_6) \cup g'(\sigma_6)\\ 
D_g(s_3) \cup D_{g'}(s_3) & =  g(\sigma_1) \cup g'(\sigma_1)\\ 
D_g(s_5) \cup D_{g'}(s_5) & =  g(\sigma_5) \cup g'(\sigma_5)\\ 
D_g(s_6) \cup D_{g'}(s_6) & =  g(\sigma_2) \cup g'(\sigma_2)\\ 
D_g(s_8) \cup D_{g'}(s_8) & =  g(\sigma_4) \cup g'(\sigma_4)\\ 
D_g(s_9) \cup D_{g'}(s_9) & =  g(\sigma_3) \cup g'(\sigma_3) 
\end{aligned}$$
In all cases, the contiguity $g \sim g'$ implies that the union $D_g(s_t) \cup D_{g'}(s_t)$ is (at least) contained in a simplex of $X$, and since a subset of a simplex is again a simplex, is in fact equal to a simplex of $X$.  Since $D_g$ and $D_{g'}$ agree on all other simplices of $I_{2m+1} \times I_{2n+1}$, the 
contiguity $D_g \sim D_{g'}$ follows.  

Now a general contiguity $g \sim g' \colon(I_{m, n}, \partial I_{m, n}) \to (X, x_0)$ may be broken down into a contiguity equivalence 
$$g \sim \cdots \sim g_i \sim \cdots \sim g'$$
consisting of a succession of spider moves.  This is not difficult to see; we leave a justification to the reader.
From the first part, this will give a succession of contiguities
$$D_g \sim \cdots \sim D_{g_i} \sim \cdots \sim D_{g'},$$
namely,  a contiguity equivalence $D_g \simeq D_{g'}\colon \left(I_{2m+1} \times I_{2n+1}, \partial (I_{2m+1} \times I_{2n+1}) \right) \to (X, x_0)$.  
\end{proof}

\begin{remark}\label{rem: presumed G(X)}
We believe it is possible to mimic the (trivial) extensions of \secref{sec: extn contiguity} in the context of simplicial maps $(I_{m, n}, \partial I_{m,n}) \to (X, x_0)$, and define a group in that context in a way very much like how we have defined our face group using simplicial maps $\left(I_{2m+1} \times I_{2n+1}, \partial (I_{2m+1} \times I_{2n+1}) \right) \to (X, x_0)$.  Some technical complications arise with inverses and also with having to re-size the domain after extending.  Assuming such a development, leading to a group $G(X)$, say, then it should likewise be possible to develop the maps $E^*$ and $D_{(-)}$ from above \propref{prop: E then D_f}, together with more results along the lines of those in this section, into an isomorphism $G(X) \cong F(X, x_0)$.  Finally, the results of  \secref{sec: simp approx} could presumably be developed further to obtain a second isomorphism  $G(X) \cong \pi_2( |X|, x_0)$.  Implementing all this would take considerable work beyond what we have presented here.  Instead, 
we have developed results involving maps  $(I_{m, n}, \partial I_{m,n}) \to (X, x_0)$ only to the extent that we need them to establish the isomorphism of \thmref{thm: Face group iso homotopy group} directly, without involving the presumed intermediate group $G(X)$.         
\end{remark}

\section{Identification of $F(X, x_0)$ with $\pi_2(|X|, x_0)$}\label{sec: face group iso}  

We choose the formulation of  $\pi_2(|X|, x_0)$, the second homotopy group of the spatial realization of a simplicial complex $X$, that most closely fits our 
purposes here.  Namely, $\pi_2(|X|, x_0)$ consists of equivalence classes of maps of pairs $(I^2, \partial I^2) \to (X, x_0)$ modulo the equivalence relation of homotopy relative the boundary (a homotopy must remain stationary at $x_0$ on the boundary $\partial I^2$).  The product in this group is that induced by the operation on maps  
$$f \cdot g (i, j) = 
\begin{cases} f(2i, j) & 0 \leq i \leq 1/2\\
g(2i-1, j) & 1/2 \leq i \leq 1,
\end{cases}
$$
with $f, g\colon (I^2, \partial I^2) \to (X, x_0)$.

\begin{theorem}\label{thm: Face group iso homotopy group}
Let $(X, x_0)$ be a based simplicial complex.  Define a map 
$$\Phi\colon F(X, x_0) \to \pi_2( |X|, x_0)$$
by $\Phi\big( [g] \big) = [ |g\circ E| ]$ for a simplicial map $g \colon \left(I_{m} \times I_{n}, \partial (I_{m} \times I_{n}) \right) \to (X, x_0)$ and $E\colon I_{m, n} \to I_m \times I_n$ the simplicial map from \secref{sec: Imn vs. ImxIn}.  Then $\Phi$ is an isomorphism of groups. 
\end{theorem}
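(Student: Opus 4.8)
The plan is to check in turn that $\Phi$ is well defined, that it is a homomorphism, that it is surjective, and that it is injective; the real work is injectivity. For well-definedness, a contiguity $g\sim g'$ of face spheres induces a contiguity $g\circ E\sim g'\circ E$ of maps $I_{m,n}\to X$, whence $|g\circ E|\simeq|g'\circ E|$ rel $\partial I^2$ by the straight-line homotopy inside each simplex; and a trivial extension $\bar g$ of $g$ merely pads $g\circ E$ with the constant value $x_0$ near the top and right edges, so that $|\bar g\circ E|$ is $|g\circ E|$ precomposed with a homeomorphism of $I^2$ onto a sub-square and extended by $x_0$ outside it, and a straightforward shrink-and-extend homotopy rel $\partial I^2$ identifies the two classes. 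Hence $\Phi$ descends to $F(X,x_0)$. That $\Phi$ is a homomorphism is the standard fact that the product in $\pi_2$ is insensitive to the placement of the two factor maps in disjoint sub-squares: $|(f\cdot g)\circ E|$ places $|f\circ E|$ and $|g\circ E|$ in disjoint sub-squares of $I^2$ (lower-left and upper-right) with value $x_0$ elsewhere, and an elementary homotopy rel $\partial I^2$ carries this to the side-by-side representative of $[|f\circ E|]\cdot[|g\circ E|]$.

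For surjectivity, start with $f\colon(I^2,\partial I^2)\to(|X|,x_0)$ and apply \thmref{thm: simp approx} to obtain a simplicial approximation $g\colon(I_{m,m},\partial I_{m,m})\to(X,x_0)$ with $|g|\simeq f$ rel $\partial I^2$. Form the face sphere $D_g\colon I_{2m+1}\times I_{2m+1}\to X$ of \propref{prop: Df simplicial}. By \propref{prop: digital f} there is a contiguity $g\circ\gamma\sim D_g\circ E$, so $|g\circ\gamma|\simeq|D_g\circ E|$ rel $\partial I^2$, while \lemref{lem: g-gamma and g homotopic} gives $|g\circ\gamma|\simeq|g|$ rel $\partial I^2$. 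Concatenating, $|D_g\circ E|\simeq f$ rel $\partial I^2$, i.e. $\Phi([D_g])=[f]$.

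For injectivity, suppose $\Phi([g_1])=\Phi([g_2])$, so that $|g_1\circ E|\simeq|g_2\circ E|$ rel $\partial I^2$ with $g_i\colon I_{m_i}\times I_{n_i}\to X$. First replace each $g_i$ by a trivial extension $h_i\colon I_M\times I_N\to X$ onto a common rectangle (this changes neither $[g_i]$ nor, by the first paragraph, the class of its realization), so that $|h_1\circ E_{M,N}|\simeq|h_2\circ E_{M,N}|$ rel $\partial I^2$. Now \thmref{thm: homotopy implies contiguity} supplies $k$ with $(h_1\circ E_{M,N})\circ\rho_k\simeq(h_2\circ E_{M,N})\circ\rho_k$, and \lemref{lem: switching E} together with part (a) of \propref{prop: contiguity results1} rewrites $h_i\circ(E_{M,N}\circ\rho_k)\simeq h_i\circ(\alpha_M^{M(k-1)}\times\alpha_N^{N(k-1)})\circ E_{kM,kN}$. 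The key point is that $h_i\circ(\alpha_M^{M(k-1)}\times\alpha_N^{N(k-1)})$ is precisely a trivial extension $\bar h_i$ of $h_i$ (hence of $g_i$), so we arrive at $\bar h_1\circ E\simeq\bar h_2\circ E$ as maps $(I_{kM,kN},\partial I_{kM,kN})\to(X,x_0)$. Breaking this contiguity equivalence into single contiguities and applying \lemref{lem: extension of contiguity} link by link yields $D_{\bar h_1\circ E}\simeq D_{\bar h_2\circ E}$, hence $[D_{\bar h_1\circ E}]=[D_{\bar h_2\circ E}]$ in $F(X,x_0)$. Finally \propref{prop: E then D_f} gives $D_{\bar h_i\circ E}\approx\bar h_i$, so $[g_1]=[\bar h_1]=[\bar h_2]=[g_2]$, and $\Phi$ is injective. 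Being a bijective homomorphism, $\Phi$ is an isomorphism.

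The substantive content lies almost entirely in the bridging results of \secref{sec: Imn vs. ImxIn} and the simplicial-approximation results of \secref{sec: simp approx}, which are what allow a topological statement (homotopic realizations) to be converted back into the combinatorial one (extension-contiguity equivalence of face spheres); those are already in place. Within the present argument, the one genuinely delicate bookkeeping is tracking the $\alpha$-indices in the injectivity step so that precomposition with $\rho_k$ followed by \lemref{lem: switching E} lands exactly on a trivial extension of $h_i$ — this is what lets \propref{prop: E then D_f} and \lemref{lem: extension of contiguity} close the loop. The well-definedness on trivial extensions and the homomorphism property are, by contrast, routine once the shrink/rearrange homotopies are written out.
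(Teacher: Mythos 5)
Your proposal is correct and follows essentially the same route as the paper: the same well-definedness and homomorphism homotopies, the same surjectivity argument via \thmref{thm: simp approx}, \propref{prop: digital f} and \lemref{lem: g-gamma and g homotopic}, and the same chain \thmref{thm: homotopy implies contiguity} $\to$ \lemref{lem: switching E} $\to$ \lemref{lem: extension of contiguity} $\to$ \propref{prop: E then D_f} for injectivity. The only (cosmetic) difference is that you compare two arbitrary maps directly after padding them to a common rectangle, whereas the paper uses the homomorphism property to reduce to triviality of the kernel and then invokes \lemref{lem: D constant} to identify $D_{c_{x_0}}$ with the constant map.
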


\begin{proof}
We begin by showing that $\Phi$ is well defined.  First, consider 
$$\overline{g} = g\circ (\alpha_m^p \times \alpha_n^q) \colon \left(I_{m+p} \times I_{n+q}, \partial (I_{m+p} \times I_{n+q}) \right) \to (X, x_0)$$
a trivial extension of $g$, with $p, q \geq 0$. Visually, we may represent $|\overline{g}\circ E| \colon  (I^2, \partial I^2) \to (|X|, x_0)$ as in the following figure. 
\[
|\overline{g}\circ E|  =
\vcenter{\hbox{\begin{tikzpicture}[scale=.4]
\draw[thick] (0,0) rectangle (4,3) node[pos=.5] {$|g\circ E|$};
\draw[thick] (4, 3)--(6, 6);
\draw[thick] (0,0) rectangle (6,6);
\end{tikzpicture}
}}
\]
As in our previous such figures in \secref{sec: defn of Face Group}, the areas left blank are mapped to $x_0$. Unlike those previous figures, which indicated vertex maps, this figure indicates a continuous map: the rectangle $[0, m/(m+p)] \times [0, n/(n+q)]$ is an actual rectangle in $\R^2$; the whole of the blank regions and their edges are mapped to $x_0$.
We claim that there is a homotopy relative the boundary $|\overline{g}\circ E| \simeq |g\circ E| \colon  (I^2, \partial I^2) \to (|X|, x_0)$. For this, we want a map $\delta \colon I^2 \to I^2$ that expands the rectangle to the whole square $I^2$, compresses the trapeziods to their top and right edges, and satisfies $\delta(\partial I^2) \subseteq \partial I^2$.  The following will suffice.   

A typical point in the rectangle is $(rm/(m+p), sn/(n+q)) \in  [0, m/(m+p)] \times [0, n/(n+q)]$, for $0 \leq r, s \leq 1$.  We set
$$\delta\left( \frac{rm}{m+p},  \frac{sn}{n+q}\right) := (r, s) \in I^2.$$
We extend $\delta$ to the whole of $I^2$ as follows. The typical point on the top edge of the rectangle is $y_r:=(rm/(m+p), n/(n+q))$ for some $0 \leq r \leq 1$ and $\delta$ as we have defined it so far maps this point to $(r, 1)$ on the top edge of the square.  So, on  the trapezoid above the rectangle, set
$$\delta\left( \overline{ y_r\  (r, 1)}  \right) := (r, 1)$$
for all points on the segment with endpoints $(rm/(m+p), n/(n+q))$ and $(r, 1)$ for each $0 \leq r \leq 1$.
Similarly, on  the trapezoid to the right of the rectangle, set
$$\delta\left( \overline{ ( \frac{m}{m+p},  \frac{sn}{n+q} ) (1, s)}  \right) := (1, s)$$
for all points on the segment with endpoints $(m/(m+p), sn/(n+q))$ and $(1, s)$ for each $0 \leq s \leq 1$.  This defines our map $\delta \colon I^2 \to I^2$.  Since $I^2$ is a convex subset of $\R^2$, the straight-line homotopy $H \colon I^2 \times I \to I^2$ defined by
$$H( y, t) = (1-t) \delta(y) + t y$$
for points $y \in I^2$ gives a homotopy $\delta \simeq \mathrm{id} \colon I^2 \to I^2$.   Because $\delta(\partial I^2) \subseteq \partial I^2$ and $|g \circ E| (\partial I^2) = \{ x_0 \}$, the composition 
$|g \circ E| \circ H \colon (I^2, \partial I^2) \to (|X|, x_0)$ gives a homotopy $|g \circ E| \circ \delta \simeq |g \circ E|$.  But $|g \circ E| \circ \delta = |\overline{g}\circ E|$ and our claim follows.
  
Second, suppose that we have contiguous maps 
$$g \sim g'  \colon \left(I_{m} \times I_{n}, \partial (I_{m} \times I_{n}) \right) \to (X, x_0).$$
Then we have a contiguity 
$$g\circ E \sim g'\circ E  \colon (I_{m, n}, \partial I_{m, n} )  \to (X, x_0).$$
Since contiguous maps have homotopic spatial realizations via the straight line homotopy in each simplex, it follows that we have a homotopy relative the boundary $|g\circ E| \simeq |g' \circ E| \colon \colon (I^2, \partial I^2) \to (|X|, x_0)$.  From the last two points, it follows that $\Phi$ is well defined.  

For $\Phi$ to be a homomorphism, we require a homotopy relative the boundary
 $$| (g_1 \cdot g_2)\circ E| \simeq |g_1\circ E| \cdot |g_2\circ E| \colon (I^2, \partial I^2) \to (|X|, x_0)$$
 for elements $[g_1], [g_2] \in F(X, x_0)$.  As $\Phi$ is well-defined, we may use trivial extensions if need be to choose representatives 
$$g_1, g_2\colon \left(I_{m} \times I_{n}, \partial (I_{m} \times I_{n}) \right) \to (X, x_0)$$
with the same-sized domain.
The operation $g_1 \cdot g_2$ is our operation on simplicial maps from \secref{sec: defn of Face Group}, and the operation $|g_1\circ E| \cdot |g_2\circ E|$ is addition of continuous maps $(I^2, \partial I^2) \to (|X|, x_0)$ as given at the start of this section. 
 Pictorially, then, we may represent these two maps as
\[
| (g_1 \cdot g_2)\circ E| =
\vcenter{\hbox{\begin{tikzpicture}[scale=.4]
\draw[thick] (0,0) rectangle (4,4) node[pos=.5] {$|g_1\circ E|$};
\draw[thick] (0,4) rectangle (4,8);
\draw[thick] (4,4) rectangle (8,8) node[pos=.5] {$|g_2\circ E|$};
\draw[thick] (4,0) rectangle (8,4);
\end{tikzpicture}
}}
\quad \text{and} \quad 
|g_1\circ E| \cdot |g_2\circ E|  =
\vcenter{\hbox{\begin{tikzpicture}[scale=.4]
\draw[thick] (0,0) rectangle (4,8) node[pos=.5] {$|g_1\circ E|$};
\draw[thick] (4,0) rectangle (8,8) node[pos=.5] {$|g_2\circ E|$};
\end{tikzpicture}
}}
\]
A homotopy from one to the other may be represented pictorially in two steps (in which $\simeq$ denotes homotopy relative the boundary of maps $I^2 \to |X|$)
$$
\vcenter{\hbox{\begin{tikzpicture}[scale=.4]
\draw[thick] (0,0) rectangle (4,4) node[pos=.5] {$|g_1\circ E|$};
\draw[thick] (0,4) rectangle (4,8);
\draw[thick] (4,4) rectangle (8,8) node[pos=.5] {$|g_2\circ E|$};
\draw[thick] (4,0) rectangle (8,4);
\end{tikzpicture}
}}
\simeq
\vcenter{\hbox{\begin{tikzpicture}[scale=.4]
\draw[thick] (0,0) rectangle (4,4) node[pos=.5] {$|g_1\circ E|$};
\draw[thick] (0,4) rectangle (4,8);
\draw[thick] (4,4) rectangle (8,8);
\draw[thick] (4,0) rectangle (8,4)  node[pos=.5] {$|g_2\circ E|$};
\end{tikzpicture}
}}
\simeq
\vcenter{\hbox{\begin{tikzpicture}[scale=.4]
\draw[thick] (0,0) rectangle (4,8) node[pos=.5] {$|g_1\circ E|$};
\draw[thick] (4,0) rectangle (8,8) node[pos=.5] {$|g_2\circ E|$};
\end{tikzpicture}
}}
$$
Symbolically, the central figure represents  an intermediate  map $F \colon (I^2, \partial I^2) \to (X, x_0)$ given by
$$F(i, j) = \begin{cases}
| g_1\circ E|(2i, 2j) & (i, j) \in [0, 1/2]\times[0, 1/2] \\
| g_2\circ E|(2i-1, 2j) & (i, j) \in [1/2, 1]\times[0, 1/2] \\
x_0 & (i, j) \in I\times[1/2, 1] 
\end{cases}$$ 
The first homotopy $| (g_1 \cdot g_2)\circ E| \simeq F \colon (I^2, \partial I^2) \to (|X|, x_0)$ may be written as follows. Assemble a homotopy $H \colon I^2 \times I \to I$ from
$$H_1 \colon ([0, 1/2] \times I)\times I \to I^2 \qquad \text{and} \qquad H_2 \colon ([1/2, 1] \times I)\times I \to I^2$$
where $H_1$ is stationary with formula 
$$H_1( (i, j), t) = \begin{cases} 
(2i, 2j) &  (i, j) \in [0, 1/2]\times[0, 1/2] \\
(2i, 1) &(i, j) \in [0, 1/2]\times[1/2, 1]
\end{cases}$$
and $H_2$ is given by
$$H_2( (i, j), t) = \begin{cases} 
(2i, 0) &  (i, j) \in [1/2, 1]\times[0, t/2] \\
(2i, 2(j - t/2) ) &(i, j) \in [1/2, 1]\times[t/2, (t+1)/2]\\
(2i, 1) &(i, j) \in [1/2, 1]\times[(t+1)/2, 1].
\end{cases}$$
Then define $G_1 \colon I^2 \times I \to X$ by
$$G_1( y, t)  = \begin{cases}
|g_1 \circ E|\circ H_1(y, t) & y \in [0, 1/2]\times I\\
|g_2 \circ E|\circ H_2(y, t) & y \in [1/2, 1]\times I. 
\end{cases}$$
We leave the reader to check that this gives a homotopy relative the boundary $F \simeq | (g_1 \cdot g_2)\circ E| \colon (I^2, \partial I^2) \to (|X|, x_0)$.  Starting this homotopy at $F$, as we have done, eases the formulas above.  Since homotopy is a symmetric relation, we obtain one  $| (g_1 \cdot g_2)\circ E| \simeq F$.  
The second homotopy $G_2 \colon (I^2, \partial I^2) \to (|X|, x_0)$ may be written directly as 
$$G_2 ((i, j), t) = \begin{cases}
|g_1\circ E|(2i, (2-t)j) & (i, j) \in [0, 1/2]\times[0, 1/(2-t)] \\
|g_2\circ E|(2i-1, (2-t)j) & (i, j) \in [1/2, 1]\times[0, 1/(2-t)] \\
x_0 & (i, j) \in I\times[1/(2-t), 1].
\end{cases}$$
It is straightforward to check that this gives a homotopy relative the boundary
$F \simeq  |g_1\circ E| \cdot |g_2\circ E| \colon (I^2, \partial I^2) \to (|X|, x_0)$.  It now follows that $\Phi$ is a homomorphism.

It remains to show that $\Phi$ is surjective and injective.  This last part of the argument is the (only) reason we developed the results of \secref{sec: simp approx} and  \secref{sec: Imn vs. ImxIn}.  Suppose we have an element $[f] \in \pi_2(|X|, x_0)$ represented by a continuous map $f \colon (I^2, \partial I^2) \to (|X|, x_0)$.  Theorem~\ref{thm: simp approx} gives a simplical approximation $g \colon (I_{m, m}, \partial I_{m, m}) \to (X, x_0)$, for some $m$, that satisfies $[f] = [|g|] \in \pi_2(|X|, x_0)$.  Then we have an element $[D_g] \in F(X, x_0)$ represented by the map $D_g \colon \left( I_{2m} \times I_{2n}, \partial(I_{2m}\times I_{2n})\right) \to (X, x_0)$, as in \propref{prop: Df simplicial}.  Furthermore, we have a contiguity 
$$D_g \circ E \sim g\circ\gamma \colon (I_{2m+1, 2m+1}, \partial I_{2m+1, 2m+1}) \to (X, x_0)$$
by \propref{prop: digital f}.   Now contiguous maps have homotopic spatial realizations so, together with \lemref{lem: g-gamma and g homotopic}, we obtain homotopies relative the boundary
$$|D_g \circ E| \simeq |g\circ \gamma| \simeq |g| \colon    (I^2, \partial I^2) \to (|X|, x_0).$$
Notice that the first homotopy that comes from the contiguity is indeed relative the boundary since that homotopy is the straight-line homotopy in (the spatial realization of) each simplex, and so the homotopy will be stationary at the constant map on the boundary. Finally, we may write 
$$\Phi( [D_g]) = [|D_g\circ E|] = [|g\circ \gamma|] = [|g|] = [f],$$
and hence $\Phi$ is surjective.  

Since we have shown $\Phi$ is a homomorphism, it is sufficient for injectivity to show that $[g] = 0 \in F(X, x_0)$ whenever we have $\Phi( [g]) = 0 \in \pi_2(X, x_0)$. So, assume that some simplicial map $g \colon \left( I_{m} \times I_{n}, \partial(I_{m}\times I_{n})\right) \to (X, x_0)$ has $|g\circ E|$ homotopic relative the boundary to the constant map at $x_0$.  Then for the maps 
$g, c_{x_0} \colon \left( I_{m} \times I_{n}, \partial(I_{m}\times I_{n})\right) \to (X, x_0)$ we have a homotopy relative the boundary  
$|g\circ E| \simeq |c_{x_0} \circ E| \colon (I^2, \partial I^2) \to (|X|, x_0)$. From Theorem~\ref{thm: homotopy implies contiguity}, we have a contiguity equivalence
$$g\circ E \circ \rho_k \simeq c_{x_0}\circ E \circ \rho_k\colon ( I_{km, kn}, \partial I_{km, kn} ) \to (X, x_0)$$
for some $k$.  Notice that  $c_{x_0}\circ E \circ \rho_k$ is again a constant map at $x_0$ which, by a slight abuse of notation, we also write as $c_{x_0}$.  Also, \lemref{lem: switching E} 
gives a contiguity equivalence $E \circ \rho_k \simeq (\alpha_m^{m(k-1)} \times \alpha_n^{n(k-1)}) \circ E$.  Part (a) of \lemref{prop: contiguity results1} gives a contiguity equivalence
$$\overline{g} \circ E \simeq c_{x_0} \colon ( I_{km, kn}, \partial I_{km, kn} ) \to (X, x_0),$$
where $\overline{g} = g\circ (\alpha_m^{m(k-1)} \times \alpha_n^{n(k-1)}) \colon \left( I_{km} \times I_{kn}, \partial( I_{km} \times I_{kn})   \right) \to (X, x_0)$, a trivial extension of $g$. 
 Now \lemref{lem: extension of contiguity} gives a contiguity equivalence 
 $$D_{\overline{g} \circ E} \simeq D_{c_{x_0}} \colon  \left( I_{2km+1} \times I_{2kn+1}, \partial( I_{2km+1} \times I_{2kn+1})   \right) \to (X, x_0).$$
 \lemref{lem: D constant} gives $D_{c_{x_0}}  = c_{x_0}$ (a further slight abuse of notation), and  \propref{prop: E then D_f}
 gives  $D_{\overline{g} \circ E}$ is contiguity equivalent to a further trivial extension of $\overline{g}$, which is a trivial extension of $g$. It follows that $[g] = 0 \in F(X, x_0)$.  
\end{proof}

\begin{example}
Let $X$ be the octahedral model of $S^2$ from \exref{ex: sphere S2}.  Then we have $|X| = S^2$ and \thmref{thm: Face group iso homotopy group} implies an isomorphism of groups 
$F(X, x_0) \cong \pi_2(S^2) \cong \Z$.  We assert that the map $f:  \left(I_5 \times I_4, \partial (I_5 \times I_4) \right)  \to (X, x_0)$ illustrated in \figref{fig: simplicial map} represent the trivial element in $F(X, x_0)$.  (Briefly, this must be the case since $f$ omits the vertex $-\mathbf{e}_2$ from its image.) We further assert that the map represented array style in \figref{fig: Generator F(X)} below represents a generator of $F(X, x_0)$.    (Briefly, this is so as the spatial realization of this map corresponds to a generator of $\pi_2(S^2)$ when represented as a map $(I^2, \partial I^2) \to (S^2, x_0)$.)    

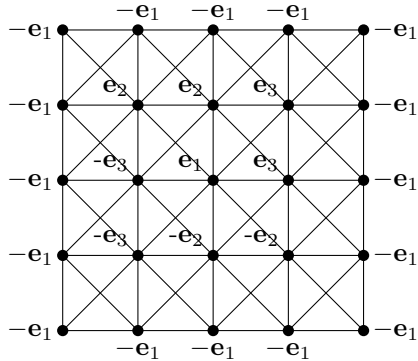
\begin{figure}
\[
\begin{tikzpicture}[scale=1]
\foreach \x in {0,...,4} {
 \foreach \y in {0,...,4} {
   \node[inner sep=1.5pt, circle, fill=black]  at (\x,\y) {};
  }
 }
\draw (0,0) grid (4,4);
\foreach \i in {0,..., 3} {
  \foreach \j in {0,..., 3} {
    \draw (\i,\j) -- (\i+1,\j+1);
  }
}
\foreach \i in {0,..., 3} {
  \foreach \j in {1,..., 4} {
    \draw (\i,\j) -- (\i+1,\j-1);
  }
}
\foreach \j in {0,..., 4} {
\node[left]  at (0,\j) {$-\mathbf{e}_1$};
\node[right]  at (4,\j) {$-\mathbf{e}_1$};
}
\foreach \i in {1,..., 3} {
\node[below]  at (\i,0) {$-\mathbf{e}_1$};
\node[above]  at (\i,4) {$-\mathbf{e}_1$};
}
\node[above left]  at (1,3) {$\mathbf{e}_2$};
\node[above left]  at (2,3) {$\mathbf{e}_2$};
\node[above left]  at (3,3) {$\mathbf{e}_3$};
\node[above left]  at (3,2) {$\mathbf{e}_3$};
\node[above left]  at (3,1) {-$\mathbf{e}_2$};
\node[above left]  at (2,1) {-$\mathbf{e}_2$};
\node[above left]  at (1,1) {-$\mathbf{e}_3$};
\node[above left]  at (1,2) {-$\mathbf{e}_3$};
\node[above left]  at (2,2) {$\mathbf{e}_1$};
\end{tikzpicture}
\]
\caption{A generator of the face group $F(X, x_0)$.} \label{fig: Generator F(X)}
\end{figure}

\end{example}

\section{Future Work}\label{sec: future work}

We have focussed exclusively on the second homotopy group in this paper but it seems reasonable to treat higher homotopy groups of all dimensions in a way similar to our development here. On the combinatorial side, defining a combinatorial version of $\pi_k(X)$, for $k \geq 3$,  would involve extension-contiguity equivalence classes of maps 
$$I_{n_1} \times \cdots \times I_{n_k} \to X$$
that send the boundary to the basepoint $x_0 \in X$. One would need to adapt and extend the material on simplicial approximation to this higher-dimensional context.  

We have an immediate application of our face group to our work in digital topology.  In previous work \cite{LOS19c, LuSc22}, we have defined a digital fundamental group and shown that our digital fundamental group is isomorphic to the edge group of the clique complex (a simplicial complex associated to the digital image by considering the digital image as a graph). More recently, in \cite{L-M-S-S-T} we have defined a digital second homotopy group.  With our face group now in hand, it is straightforward to establish a suitable chain of isomorphisms as we did in \cite{LuSc22} and conclude the following.

\begin{introtheorem}
Let $X$ be a digital image and $\mathrm{cl}(X)$ be the clique complex of $X$, as in \cite{LuSc22}. We have isomorphisms of groups
$$\pi_2(X) \cong F(\mathrm{cl}(X)) \cong \pi_2(|\mathrm{cl}(X)|),$$
with the $\pi_2(X)$ on the left denoting the digital second homotopy group of \cite{LuSc22} and the $\pi_2(|\mathrm{cl}(X)|)$ on the right denoting the ordinary (topological) second homotopy group of the spatial realization of $\mathrm{cl}(X)$.  
\end{introtheorem}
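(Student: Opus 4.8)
The plan is to factor the statement into its two isomorphisms and handle them separately. The right-hand isomorphism $F(\mathrm{cl}(X)) \cong \pi_2(|\mathrm{cl}(X)|)$ is nothing but \thmref{thm: Face group iso homotopy group} applied to the based simplicial complex $(\mathrm{cl}(X), x_0)$, so no further work is needed there. All the content lies in producing the left-hand isomorphism $\pi_2(X) \cong F(\mathrm{cl}(X))$, where $\pi_2(X)$ is the digital second homotopy group of \cite{L-M-S-S-T}, built (as in the $\pi_1$ development of \cite{LuSc22}) from equivalence classes of digital maps of digital rectangles into $X$ that carry the boundary to the basepoint, modulo digital homotopy relative the boundary, with concatenation as the operation.

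First I would record the dictionary between the digital and simplicial sides. A digital rectangle $[0,m]_{\Z} \times [0,n]_{\Z}$, equipped with the adjacency in which $(i,j)$ and $(i',j')$ are adjacent exactly when $\max\{|i-i'|,|j-j'|\} \leq 1$, has clique complex precisely $I_m \times I_n$; this is exactly \remref{rem: I_m x I_n as clique complex}. Consequently a digital map from such a rectangle into the digital image $X$ is the same datum as a vertex map $I_m \times I_n \to \mathrm{cl}(X)$ carrying cliques (i.e.\ simplices) to simplices, and the condition of sending the boundary to the basepoint matches our boundary condition on face spheres. Thus the set of digital $2$-loops of size $m \times n$ in $X$ is in canonical bijection with the set of face spheres $\left(I_m \times I_n, \partial(I_m \times I_n)\right) \to (\mathrm{cl}(X), x_0)$.

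Next I would check that this bijection descends to the respective equivalence relations and respects the operations. A digital homotopy relative the boundary between two digital maps of a fixed rectangle can be arranged, exactly as in the $\pi_1$ case of \cite{LuSc22}, into a finite sequence of elementary moves each altering the value at a single interior point, and such a move is precisely a contiguity of the corresponding simplicial maps (one checks the contiguity condition on the four $3$-simplices of $I_m \times I_n$ meeting that vertex, as in the intrinsic description of \secref{sec: combinatorial face group}); conversely a contiguity of simplicial maps of clique complexes is a digital homotopy. The digital operation of stretching a loop to a larger rectangle by repeating a row or column of values matches the (trivial) extensions and the row/column doublings of \secref{sec: extn contiguity}, so by \corref{cor: different doublings} the full digital homotopy relation on rectangles of varying size corresponds to extension-contiguity equivalence. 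Finally, the digital concatenation of two digital $2$-loops, after a reparametrization inserting a row and a column of basepoints, is exactly the operation $f \cdot g$ of \secref{sec: defn of Face Group}, and that this reparametrization is an extension-contiguity equivalence is again \corref{cor: different doublings}. Assembling these observations gives a well-defined isomorphism of groups $\pi_2(X) \cong F(\mathrm{cl}(X))$, and composing with \thmref{thm: Face group iso homotopy group} yields the full chain.

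The main obstacle I anticipate is bookkeeping rather than conceptual: one must confirm that the adjacency on the digital rectangle used in the definition of \cite{L-M-S-S-T} is the one whose clique complex is $I_m \times I_n$ (and not, say, an axis-only adjacency), and that the digital homotopy relation used there coincides, up to the elementary-move normal form, with the one analyzed above. With those points pinned down, verifying that elementary digital moves correspond to contiguities and digital stretching to (trivial) extension is a routine check of the kind already carried out throughout \secref{sec: extn contiguity} and \secref{sec: combinatorial face group}.
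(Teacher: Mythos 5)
You should first note that the paper itself does not prove this statement: it appears in \secref{sec: future work} as an announced application, accompanied only by the remark that ``it is straightforward to establish a suitable chain of isomorphisms as we did in \cite{LuSc22}'' and the explicit sentence ``We will establish this result in a forthcoming paper.'' So there is no proof in the paper to compare yours against. That said, your decomposition is exactly the one the authors indicate: the right-hand isomorphism is \thmref{thm: Face group iso homotopy group} applied to $(\mathrm{cl}(X),x_0)$, and the left-hand isomorphism is the $\pi_2$-analogue of the $\pi_1$ comparison of \cite{LuSc22}, mediated by the observation (\remref{rem: I_m x I_n as clique complex}) that the clique complex of the digital rectangle with $\max$-adjacency is $I_m\times I_n$.

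The one place where your write-up understates the difficulty is the assertion that a digital homotopy relative the boundary ``can be arranged \ldots into a finite sequence of elementary moves each altering the value at a single interior point, and such a move is precisely a contiguity.'' This is the entire content of the left-hand isomorphism, and it is not mere bookkeeping: the standard definition of digital homotopy requires continuity only in each variable separately, whereas contiguity of the induced simplicial maps is the stronger ``diagonal'' condition that $H(\sigma\times\{t,t+1\})$ lie in a simplex; reconciling the two (by subdividing the homotopy into spider moves, as is done for $\pi_1$ in \cite{LuSc22}) is the substantive step the authors defer to their forthcoming paper. Likewise, digital concatenation places the two $2$-loops side by side while $f\cdot g$ places them in diagonal blocks; these are reconciled by the manipulations in the proof of \thmref{thm: Face group abelian}, which you should cite explicitly rather than fold into ``reparametrization.'' With those two points filled in, your outline is a correct and complete plan, and it is consistent with the strategy the paper announces.
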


We will establish this result in a forthcoming paper.  Just as \cite{LuSc22} led to many results about the digital fundamental group, so too this result will greatly clarify our understanding of the digital second homotopy group.

A third direction for development involves the simplicial loop space of \cite{LuSc25}. This work stems from the idea that a face sphere may be viewed as a sequence of edge loops by reading across the rows of the face sphere in order.  Namely, a face sphere corresponds in this way to an ``edge loop of edge loops" in some suitable, larger simplicial complex. In \cite{LuSc25} we implement the program suggested by this idea.  We define a simplicial complex $\Omega X$ for any simplicial complex $X$ that serves as a simplicial version of the ordinary (topological) based loop space.  In particular, we show there an isomorphism of groups
$$E(\Omega X) \cong F(X),$$
between the edge group of the simplicial complex $\Omega X$ and the face group of the original simplicial complex $X$. This isomorphism is a combinatorial version of the isomorphism $\pi_1(\Omega X) \cong \pi_2(X)$ for a topological space $X$.

%\bibliographystyle{amsplain}
%\bibliography{FaceGroup}

%\end{document}

\providecommand{\bysame}{\leavevmode\hbox to3em{\hrulefill}\thinspace}
\providecommand{\MR}{\relax\ifhmode\unskip\space\fi MR }
% \MRhref is called by the amsart/book/proc definition of \MR.
\providecommand{\MRhref}[2]{%
  \href{http://www.ams.org/mathscinet-getitem?mr=#1}{#2}
}
\providecommand{\href}[2]{#2}

\end{document}